\def\ps@pprintTitle{%
 \let\@oddhead\@empty
 \let\@evenhead\@empty
 \def\@oddfoot{}%
 \let\@evenfoot\@oddfoot}
\renewcommand*{\eqref}[1]{\hyperref[{#1}]{\textup{\tagform@{\ref*{#1}}}}}
\let\originalleft\left
\let\originalright\right
\renewcommand{\left}{\mathopen{}\mathclose\bgroup\originalleft}
\renewcommand{\right}{\aftergroup\egroup\originalright}
\theoremstyle{plain}
\newtheorem{theorem}{Theorem}[section]
\newtheorem{lemma}[theorem]{Lemma}
\newtheorem{proposition}[theorem]{Proposition}
\newtheorem{convention}{Convention}[section]
\theoremstyle{definition}
\newtheorem{definition}[theorem]{Definition}
\newtheorem{example}[theorem]{Example}
\newtheorem{remark}[theorem]{Remark}
\numberwithin{equation}{section}
\crefname{theorem}{Theorem}{Theorems}
\crefname{maintheorem}{Theorem}{Theorems}
\crefname{lemma}{Lemma}{Lemmata}
\crefname{proposition}{Proposition}{Propositions}
\crefname{corollary}{Corollary}{Corollaries}
\crefname{definition}{Definition}{Definitions}
\crefname{remark}{Remark}{Remarks}
\crefname{example}{Example}{Examples}
\crefname{section}{Section}{Sections}
\crefname{appendix}{Appendix}{Appendices}
\begin{document}

\renewcommand{\footnoterule}{} % Suppress the rule

\begin{frontmatter}
% --- TITLE ---
\title{A Geometric Realization of Spherical T-Duality via $\star$-Diagrams}

% --- AUTHORS ---
% Use \fnref{label} for footnotes and \fntext[label]{text} for the footnote text

\author[IMI]{Leonardo F. Cavenaghi}
\ead{leonardofcavenaghi@gmail.com}

\author[Unicamp]{Lino Grama}
\ead{lgrama@unicamp.br}

\author[Miami,IMSA,ICMS]{Ludmil Katzarkov}
\ead{lkatzarkov@gmail.com}

% \cortext[cor1]{Corresponding author}

% --- AFFILIATIONS ---
\address[Miami]{Department of Mathematics, University of Miami, Coral Gables, FL 33146, USA}
\address[Unicamp]{IMECC, Universidade Estadual de Campinas (Unicamp), Campinas, SP, 13083-856, Brazil}
\address[IMSA]{Institute of the Mathematical Sciences of the Americas (IMSA), Coral Gables, FL 33146, USA}
\address[ICMS]{International Center for Mathematical Sciences (ICMS), Sofia, Bulgaria}
\address[IMI]{Institute of Mathematics and Informatics (IMI), Sofia, Bulgaria}

% --- ABSTRACT ---
\begin{abstract}
We relate spherical T-duality for oriented linear $\mathrm{S}^3$-bundles over $\mathrm{S}^4$ (the Milnor bundles $M_{m,n}$, which are $\mathrm{S}^3$-principal exactly when $m=0$ or $n=0$, and whose total spaces are homotopy $7$-spheres exactly when $m+n=\pm1$) to $\star$-diagrams and to a higher-dimensional generalization of the logarithmic transformations of $4$-manifold topology.

For a pair of $\mathrm{S}^3$-principal $(P,H)$ and $(\widehat P,\widehat H)$ over $\mathrm{S}^4$, we show that the T-duality correspondence space $P\times_{\mathrm{S}^4}\widehat P$ is itself a $\star$-diagram of a distinguished type, which we call \emph{bifree}, and that bifree $\star$-diagrams are precisely the fiber products of principal bundles; spherical T-duality of the decorated pair is then a condition on the fluxes carried by that diagram. For bundles of equal Euler class $ku$, principal or not, we show that the two bundles are spherical T-dual with the diagonal fluxes $[k]$, and that they occur as the two base manifolds of an explicit $\star$-diagram, obtained by pulling back a principal Milnor bundle; this diagram is never bifree. 

We then introduce product-preserving generalized logarithmic transformations on products $\Sigma\times\mathrm{S}^1$ of homotopy spheres with the circle, and prove that, after stabilization by $\mathrm{S}^1$, the spherical T-dualities between homotopy $7$-spheres are realized by such transformations. In particular, $\Sigma^7_{GM}\times\mathrm{S}^1$ is obtained from $\mathrm{S}^7\times\mathrm{S}^1$ by one of them, where $\Sigma^7_{GM}$ denotes the Gromoll--Meyer exotic sphere: spherical T-duality relates distinct smooth structures on the topological $7$-sphere, and the relation is implemented by an explicit cut-and-paste operation. Along the way we prove that the two base manifolds of a $\star$-diagram have Morita equivalent action groupoids and for connected structure group, Hausdorff–Morita equivalent orbit foliations, and, in
the almost-free case, isomorphic real equivariant cohomology.

%Hausdorff--Morita equivalent orbit foliations, with isomorphic equivariant cohomology in the almost free case.
\end{abstract}

% --- KEYWORDS & MSC ---
\begin{keyword}
Exotic manifolds \sep Morita equivalence \sep Spherical T-duality
% Keywords separated by \sep
\MSC[2020] 57R55 \sep 53C12 \sep 81T30
% MSC codes separated by \sep
\end{keyword}

\end{frontmatter}

\section{Introduction}

Topological T-duality, in the form established by Bouwknegt, Evslin, Hannabuss, and Mathai \cite{Bouwknegt20041,Bouwknegt20042,Bouwknegt20043,Bouwknegt2005}, assigns to each pair $(P,H)$, consisting of a principal circle bundle and an integral $3$-cocycle, a T-dual pair $(\widehat P,\widehat H)$ over the same base, unique up to isomorphism, and induces degree-shifting isomorphisms between the twisted cohomology and twisted K-theory of the two sides; the correspondence extends to generalized geometry \cite{gualtieri,cavalcanti}. Its spherical analog \cite{Bouwknegt2015,Bouwknegt20152} replaces circle bundles by $\mathrm{S}^3\cong\mathrm{SU}(2)$-bundles and $3$-cocycles by degree-seven fluxes, the duality being encoded by the Gysin relations $\mathrm{c}_2(\widehat P)=\pi_*H$, $\widehat\pi_*\widehat H=\mathrm{c}_2(P)$ together with the matching of the fluxes on the correspondence space; see \cite{Lind2020,cavalcanti2025topologicalsphericaltduality} for recent developments. In contrast with the circle case, spherical T-duals are generally not unique.

Over the base $\mathrm{S}^4$, the objects paired by spherical T-duality are classical: the oriented linear $\mathrm{S}^3$-bundles over $\mathrm{S}^4$ are the Milnor bundles $M_{m,n}$, classified by $\pi_3(\mathrm{SO}(4))\cong\mathbb{Z}\oplus\mathbb{Z}$. Their total spaces are homeomorphic to $\mathrm{S}^7$ precisely when $m+n=\pm1$, and among these Milnor exhibited the first examples of manifolds homeomorphic but not diffeomorphic to the standard sphere \cite{mi}. This paper is guided by two questions: which manifolds (and, among homotopy spheres, which smooth structures) do spherical T-duality pair; and to what extent can this essentially cohomological pairing be realized by an explicit geometric operation.

Our answers are organized around $\star$-diagrams, the two-quotient diagrams $M'\leftarrow P\rightarrow M$ of a manifold with two commuting free actions introduced by Speran\c{c}a \cite{speranca2016pulling} and developed in \cite{SperancaCavenaghiPublished}, whose original purpose was the construction of exotic spheres as quotients (the Gromoll--Meyer sphere $\Sigma^7_{GM}$ \cite{gromoll1974exotic} being the fundamental example) and around a surgery operation we introduce, generalizing the logarithmic transformations of elliptic fibrations \cite{GOMPF1991479,gompfmrowka,ZENTNER200637} to products $\Sigma\times\mathrm{S}^1$ of homotopy spheres with the circle.

\vspace{1em}

On the duality side, we show that $(M_{m,0},[j])$ and $(M_{0,-j},[m])$ are dual, and the degree-seven cohomology of the correspondence space is $\mathbb{Z}\oplus\mathbb{Z}_{\gcd(j,m)}$ (Theorem \ref{thm:tdualmilnor}). We also show that any two Milnor bundles of the same Euler class $ku$ are dual with diagonal fluxes $[k]$ (Theorem \ref{thm:nonprincipaltdualmilnor}). In particular, the homotopy $7$-spheres $M_{m,1-m}$, all of Euler class $u$, are pairwise spherical T-dual with flux $[1]$: spherical T-duality relates distinct smooth structures on the topological $7$-sphere.

\vspace{1em}

On the geometric side, we show that the principal T-duality correspondence $P\times_{\mathrm{S}^4}\widehat P$ is itself a $\star$-diagram of a distinguished type, which we call bifree, and that bifree $\star$-diagrams are exactly fiber products of principal bundles (Theorem \ref{thm:equivalence}, Lemma \ref{lem:bifree}). The pullback of a principal Milnor bundle along another Milnor bundle carries two natural $\star$-structures (Proposition \ref{prop:pullback-star}): a conjugation-type structure, never bifree, which preserves the Euler class, and, over principal bases, a translation-type structure, bifree, which shifts it. The first realizes the equal-Euler-class dualities geometrically: the $\star$-quotient produces the dual bundle together with the unique diagonal fluxes (Theorem \ref{thm:non-principal-equivalence}).

\vspace{1em}

Finally, we define product-preserving generalized logarithmic transformations on $\Sigma_r\times\mathrm{S}^1$, for homotopy spheres presented as twisted doubles (Definition \ref{def:generalized-log}, Proposition \ref{prop:admissible-star}). Since the diffeomorphism type of $\Sigma\times\mathrm{S}^1$ determines that of $\Sigma$ (Theorem \ref{thm:brieskorn-van-de-ven}; \cite{BRIESKORN1968389}), these transformations genuinely change smooth structures while preserving the integral homology. Our closing result, Theorem \ref{thm:Sperical-T-Dual-vs-Log}, joins the threads: for any $m,j$, the T-dual pair $M_{m,1-m}\times\mathrm{S}^1$ and $M_{j,1-j}\times\mathrm{S}^1$ are related by a product-preserving generalized logarithmic transformation, realized as the $\star$-quotient of an explicit pullback bundle; the case $(m,j)=(1,2)$ recovers $\Sigma^7_{GM}\times\mathrm{S}^1$ from $\mathrm{S}^7\times\mathrm{S}^1$. Within a fixed Euler class, spherical T-duality, pullback $\star$-diagrams, and generalized logarithmic transformations are thus three descriptions of one operation. 

\vspace{1em} 

Along the way we establish several facts about $\star$-diagrams that are of independent interest. The two base manifolds of a $\star$-diagram have Morita equivalent action groupoids (Theorem \ref{thm:morita}) and, for connected structure group, we have Hausdorff--Morita equivalent orbit foliations (Theorem \ref{thm:starsaremorita}); their orbit spaces are canonically isometric, their orbit types agree, and, when the actions are almost free, their real equivariant cohomologies are isomorphic (Theorem \ref{thm:equivanlentcohomologies}). Any obstruction to $M\cong M'$ must therefore be sought beyond the equivariant-topological invariants, and in the case of the Gromoll--Meyer sphere it is smooth in nature.

\vspace{1em}

\section{A Concise Account and New Results on \texorpdfstring{$\star$}{star}-diagrams}
\label{sec:stardiagrams}

The concept of an \emph{exotic sphere} originates from John Milnor's pioneering work in the 1950s \cite{mi}. Milnor introduced a family of 7-dimensional manifolds that are homeomorphic but not diffeomorphic to the standard 7-dimensional sphere, then termed exotic spheres. More broadly, the term \emph{exotic manifold} refers to a smooth manifold $M'$ that is homeomorphic but not diffeomorphic to another smooth manifold $M$. In \cite{speranca2016pulling}, building on the work of \cite{duran2001pointed} and \cite{rigas1996hopf}, Sperança introduced a general method for constructing exotic manifolds $M'$ from a \emph{classical} or \emph{standard} realization $M$, establishing a correspondence between their \emph{invariant} geometries.\footnote{Notably, in the aforementioned works, this concept generalizes the construction of an exotic sphere by Gromoll and Meyer in \cite{gromoll1974exotic}; see, e.g., Example \ref{ex:gm-revisited}.} 

Let $P$ be a smooth manifold which is the total space of a $G$-principal bundle whose principal $G$-action is denoted by $\bullet$, where $G$ is a \emph{compact} Lie group. Suppose that there exists another $G$-action on $P$, denoted by $\star$, which commutes with $\bullet$ and is free, making $P$, equipped with this $G$-action, the total space of another $G$-principal bundle. Let $M$ and $M'$ be the orbit spaces for the $\bullet$ and $\star$ actions, respectively. We have the following $G$-principal bundles' diagram, which we term a \emph{$\star$-diagram}:

\begin{equation}\label{eq:CD}
\begin{xy}\xymatrix{& G\ar@{..}[d]^{\bullet} & \\ G\ar@{..}[r]^{\star} & P\ar[d]^{\pi}\ar[r]^{\pi'} & M'\\ & M &}\end{xy}
\end{equation}

\ 

Throughout this manuscript, we occasionally employ the shorthand notation $M \stackrel{\pi}{\leftarrow} P \stackrel{\pi'}{\to} M'$ for a $\star$-diagram, possibly omitting the projections $\pi$ and $\pi'$. We may also refer to it as a \emph{$\star$-bundle}. We say that $G$ (sometimes omitted) is the \emph{structure group} of the given $\star$-diagram (or $\star$-bundle) and $P$ is a $G$-$G$-manifold. 

\ 

Let us present some examples. Example \ref{ex:gromollmeyer} below can be considered as the first one exemplifying a $\star$-diagram construction. More details can be found in \cite{gromoll1974exotic, duran2001pointed, speranca2016pulling}.

\begin{example}[The Gromoll--Meyer Exotic Sphere]\label{ex:gromollmeyer}
Consider the compact Lie group:
\begin{equation}
\mathrm{Sp}(2) = \left\{ \begin{pmatrix} a & c \\ b & d \end{pmatrix} \in \mathrm{S}^7 \times \mathrm{S}^7 \;\middle|\; \bar{a}c + \bar{b}d = 0 \right\}, \label{eq:Sp2}
\end{equation}
where $a, b, c, d \in \mathbb{H}$ are quaternions with their usual conjugation, multiplication, and norm. Note that the conditions precisely mandate that the columns be orthonormal with respect to the standard quaternionic inner product. Set $q\in \mathrm{S}^3\cong \text{unit quaternions}$. The projection $\pi: \mathrm{Sp}(2) \to \mathrm{S}^7$ of an element to its first column defines an $\mathrm{S}^3$-principal bundle with principal action:

\begin{equation} \label{eq:GMprincipalaction}
\begin{pmatrix} 
a & c \\
b & d 
\end{pmatrix} \bar{q} = \begin{pmatrix}
a & c \overline{q} \\
b & d \overline{q}
\end{pmatrix}.
\end{equation}

In \cite{gromoll1974exotic}, a $\star$-action is also considered:

\begin{equation} \label{eq:GMstaraction}
q \begin{pmatrix} 
a & c \\
b & d 
\end{pmatrix} = \begin{pmatrix} 
q a \overline{q} & q c \\
q b \overline{q} & q d 
\end{pmatrix},
\end{equation}
whose quotient is an exotic 7-sphere known as the \emph{Gromoll--Meyer exotic sphere}, denoted by $\Sigma_{GM}^7$. We incorporate actions \eqref{eq:GMprincipalaction} and \eqref{eq:GMstaraction} into the principal bundles' diagram:

\begin{equation} \label{eq:CDGM}
\begin{xy} \xymatrix{
& \mathrm{S}^3 \ar@{..}[d]^{\bullet} & \\ 
\mathrm{S}^3 \ar@{..}[r]^{\star} & \mathrm{Sp}(2) \ar[d]^{\pi} \ar[r]^{\pi'} & \Sigma^7_{GM} \\ 
& \mathrm{S}^7 & 
} \end{xy}
\end{equation}
\end{example}

\vspace{1em}

A manifold $M'$ fitting one of the base manifolds in a $\star$-diagram does not necessarily come with an exotic smooth structure (Example \ref{ex:localmodelstar}). The choice of the $\star$-action, described by a cocycle condition (Definition \ref{def:star}), determines whether exotic structures emerge.

\begin{example}[Pairs of Diffeomorphic Manifolds via $\star$-Diagrams] \label{ex:localmodelstar}
Let $M$ be a smooth manifold with a smooth action by a compact Lie group $G$, denoted by $\cdot$; no freeness or effectiveness is assumed. Consider the product manifold $M \times G$ with the following $\star$-action:

\[
g \star (x, g') := (g \cdot x, gg'), \quad x \in M, \quad g, g' \in G.
\]
Let $\bullet$ be the following $G$-action on $M \times G$:
\[
g \bullet (x, g') := (x, (g') g^{-1}), \quad x \in M, \quad g, g' \in G.
\]

The $G$-actions defined by $\bullet$ and $\star$ are free commuting actions on $M \times G$. The orbit maps for these actions are, respectively, $\pi: M \times G \rightarrow M$, $(x, g') \mapsto x$, and $\pi': M \times G \rightarrow M$, $(x, g') \mapsto (g')^{-1} x$. We can construct the corresponding $\star$-diagram:

\begin{equation} \label{eq:CDcheeger}
\begin{xy} \xymatrix{
& G \ar@{..}[d]^{\bullet} & \\ 
G \ar@{..}[r]^{\star} & M \times G \ar[d]^{\pi} \ar[r]^{\pi'} & M \\ 
& M & 
} \end{xy}
\end{equation}
\end{example}

\vspace{1em}

Next, we revisit the procedure detailed in \cite{SperancaCavenaghiPublished} which provides a general method for constructing $\star$-diagrams. Theorem \ref{thm:star} shows that the actions in Example \ref{ex:localmodelstar} are always local descriptions for general $\bullet$ and $\star$-actions. 

\vspace{1em} 

\subsection{Manufacturing exotic manifolds: the recipe for \texorpdfstring{$\star$}{star}-diagrams}
Let \( M \) be a \( G \)-manifold, i.e. a smooth manifold with a (here assumed left) action by a compact Lie group $G$ and let \( \{U_i\} \) be a collection of \( G \)-invariant open sets in \( M \). By reducing \( U_i \) if necessary, we can assume that \( GU_i = U_i \). Given two \( G \)-invariant open sets \( U_i \) and \( U_j \), we define \( U_{ij} := U_i \cap U_j \).

\begin{definition}\label{def:star}
A collection \( \phi_{ij} : U_{ij} \to G \) is called a \emph{\(\star\)-collection} if it satisfies the normalization conditions $\phi_{ii}(x) = e$ on $U_i$ and $\phi_{ji}(x) = \phi_{ij}(x)^{-1}$ on $U_{ij}$, the \textit{cocycle condition}
\begin{equation}\label{eq:cocyclocondition}
    \phi_{ij}(x) \phi_{jk}(x) = \phi_{ik}(x), \quad \forall x \in U_i \cap U_j \cap U_k,
\end{equation}
and the \textit{covariance condition}
\begin{equation}\label{eq:covariance}
    \phi_{ij}(gx) = g \phi_{ij}(x) g^{-1}, \quad \forall i,j, \forall g \in G, \forall x \in U_{ij}.
\end{equation}
\end{definition}

The covariance condition \eqref{eq:covariance} ensures that the \textit{adjoint} map
\[
\widehat{\phi}_{ij} : U_{ij} \to U_{ij}
\]
defined by
\begin{equation}\label{eq:autophagic}
\widehat{\phi}_{ij}(x) = \phi_{ij}(x) x
\end{equation}
is $G$-\textit{equivariant}. Moreover, \( \widehat{\phi}_{ij} \) is a diffeomorphism of \( U_{ij} \), with smooth inverse $x\mapsto\phi_{ij}(x)^{-1}x$: the covariance condition, evaluated at $g=\phi_{ij}(x)$ and at $g=\phi_{ij}(x)^{-1}$, gives $\phi_{ij}\big(\widehat{\phi}_{ij}(x)\big)=\phi_{ij}(x)=\phi_{ij}\big(\phi_{ij}(x)^{-1}x\big)$, from which both composites are the identity.

\ 

Let \( \{\phi_{ij} : U_{ij} \to G\} \) be a \( \star \)-collection. We define the space
\[
\bigcup_{\widehat{\phi}_{ij}} U_i
\]
as the quotient space under the equivalence relation \( x \in U_{ij} \sim \widehat{\phi}_{ij}(x) \in U_{ij} \). Theorem \ref{thm:star}, proved in \cite{SperancaCavenaghiPublished}, establishes the recipe for constructing \( \star \)-diagrams. To make proper sense of this, recall that one can build a principal bundle $P\rightarrow M$ out of a $\star$-collection whose underlying invariant open sets $\{U_i\}$ cover $M$, an assumption in force whenever a bundle is constructed from a collection:
\[P=\bigcup_{f_{\phi_{ij}}}U_i\times G,\qquad f_{\phi_{ij}}(x,g)=(x,\,g\,\phi_{ij}(x)),\]
the principal action being given chartwise by left translation, $s\bullet(x,g)=(x,sg)$, which is globally well defined because left and right translations of $G$ commute. The $\star$-action furnished by Theorem \ref{thm:star} is given chartwise by
\begin{equation}\label{eq:star-chartwise-def}
r\star(x,g)=(r\cdot x,\;g\,r^{-1}),
\end{equation}
and the covariance condition \eqref{eq:covariance} is exactly what makes these chartwise formulas compatible with the transition functions: $(g\,r^{-1})\,\phi_{ij}(r\cdot x)=g\,r^{-1}\,r\,\phi_{ij}(x)\,r^{-1}=\big(g\,\phi_{ij}(x)\big)r^{-1}$. In particular $\star$ is free, since $g\,r^{-1}=g$ forces $r=e$, and it commutes with $\bullet$.

\begin{theorem}[Theorem 2.2 in \cite{SperancaCavenaghiPublished}]\label{thm:star}
Let \( \pi : P \to M \) be the principal bundle associated to a \( \star \)-collection given by \( \{\phi_{ij} : U_{ij} \to G\} \). Then \( P \) admits a new action, denoted by \( \star \), such that:

\begin{enumerate}
    \item The \( \star \)-action on \( P \) is free.
    \item \label{item:quotientopen} The quotient \( P/\star \) is a \( G \)-manifold that is equivariantly diffeomorphic to
    \[
    M' := \bigcup_{\widehat{\phi}_{ij}} U_i.
    \]
  \item \label{item:isotropy} The \( \star \)-diagram obtained from this construction is such that, denoting by \( (G\times G)_p \) the isotropy group at \( p \) of the juxtaposed action
    \[
    (r,s)\cdot p := r\star p\cdot s^{-1},
    \]
    in which \( G \times \{e\} \) acts by the \( \star \)-action and \( \{e\} \times G \) by the principal action, and by \( e \) the identity of \( G \), there exists \( g \in G \) such that
    \begin{equation}
        (G \times G)_p = \{(h, ghg^{-1}) : h \in G_{\pi(p)}\}.
    \end{equation}
\end{enumerate}
\end{theorem}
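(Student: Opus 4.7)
The plan is to realize $\mathcal P$ concretely as the Milnor-style associated bundle $\mathcal P=\bigsqcup_i(U_i\times G)/\sim$, where $(x,g)_i\sim(x,\phi_{ji}(x)g)_j$ on $x\in U_{ij}$; the cocycle condition \eqref{eq:cocyclocondition} makes this an equivalence relation and produces the principal $G$-bundle with right action $[(x,g)_i]\cdot h:=[(x,gh)_i]$. I then define the candidate $\star$-action in trivializations by
\[
h\star[(x,g)_i]:=[(hx,hg)_i],
\]
which tautologically commutes with the principal action. The preliminary check is chart-independence: transporting $(x,g)_i$ to the $U_j$-chart as $(x,\phi_{ji}(x)g)_j$ and applying the formula gives $(hx,h\phi_{ji}(x)g)_j$, while transporting $(hx,hg)_i$ gives $(hx,\phi_{ji}(hx)hg)_j$, and the covariance identity \eqref{eq:covariance} in the form $\phi_{ji}(hx)=h\phi_{ji}(x)h^{-1}$ reconciles the two.

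For item (1), $h\star[(x,g)_i]=[(x,g)_i]$ forces $hg=g$ in the group factor, so $h=e$ and the $\star$-action is free. For item (2), I use the local sections $\psi_i\colon U_i\to\mathcal P$, $x\mapsto[(x,e)_i]$. Every $\star$-orbit contains such a point (act by $g^{-1}$ on a general representative), so the compositions $\pi'\circ\psi_i$ cover $\mathcal P/\star$. Using $\phi_{ji}=\phi_{ij}^{-1}$ (a consequence of \eqref{eq:cocyclocondition}) and covariance, the point $\psi_i(x)=[(x,\phi_{ji}(x))_j]$ in the $U_j$-chart is sent by $\phi_{ij}(x)\star(-)$ to $[(\widehat{\phi}_{ij}(x),e)_j]=\psi_j(\widehat{\phi}_{ij}(x))$; hence $\pi'\circ\psi_i(x)=\pi'\circ\psi_j(y)$ precisely when $y=\widehat{\phi}_{ij}(x)$, so the gluing data for $\mathcal P/\star$ are exactly the adjoint maps $\widehat{\phi}_{ij}$. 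By Lemma \ref{lem:auxilaestrela} these form a cocycle of equivariant diffeomorphisms, yielding the equivariant diffeomorphism $\mathcal P/\star\cong M'$.

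For item (3), fix $p=[(x,g_0)_i]$ and compute $(r\star p)\cdot s^{-1}=[(rx,rg_0s^{-1})_i]$. Setting this equal to $[(x,g_0)_i]$ in the same chart forces $rx=x$, i.e.\ $r\in G_{\pi(p)}$, and $s=g_0^{-1}rg_0$; with $g:=g_0^{-1}$ this reads $(G\times G)_p=\{(h,ghg^{-1}):h\in G_{\pi(p)}\}$, as asserted.

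The main obstacle is the chart-independence of the $\star$-action in the opening step: it is precisely the covariance axiom \eqref{eq:covariance} that elevates the chartwise formula to a global smooth action, and without it the construction fails at the outset. Once this is in place, freeness, the identification with $M'$, and the isotropy description all reduce to bookkeeping in the Milnor model, with Lemma \ref{lem:auxilaestrela} supplying the cocycle compatibility of the transition maps $\widehat{\phi}_{ij}$.
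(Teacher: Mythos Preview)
Your argument is correct and is essentially the standard construction one would expect: build $\mathcal P$ as the Milnor model $\bigsqcup_i(U_i\times G)/\!\sim$, define $\star$ chartwise by $h\star[(x,g)_i]=[(hx,hg)_i]$, and verify everything in local coordinates. The covariance condition \eqref{eq:covariance} is exactly what makes the chartwise formula well defined, freeness follows from the group-factor equation $hg=g$, the identification $\mathcal P/\star\cong M'$ comes from reading off the $\star$-orbit transition maps as $\widehat{\phi}_{ij}$, and the isotropy computation is a direct calculation in a single chart (and is chart-independent because covariance forces $\phi_{ij}(x)$ to commute with every $r\in G_x$).

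Note, however, that the present paper does not give a proof of this statement at all: Theorem~\ref{thm:star} is quoted from \cite{SperancaCavenaghiPublished} and used as a black box. So there is no ``paper's own proof'' to compare against here; your write-up supplies precisely the argument that the cited reference contains, and it is the natural one.

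One small point worth making explicit for completeness: in item~(2) you identify the gluing data for $\mathcal P/\star$ but do not spell out why the induced $G$-action (descended from the principal action) matches the natural $G$-action on $M'=\bigcup_{\widehat{\phi}_{ij}}U_i$. This is immediate from the local model of Example~\ref{ex:localmodelstar}: under $\pi'\circ\psi_i$, the principal action becomes the original $G$-action on $U_i$, and the $\widehat{\phi}_{ij}$ are equivariant by Lemma~\ref{lem:auxilaestrela}. This is implicit in your last paragraph but deserves one sentence.
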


\vspace{1em} 

A prolific method for constructing new $\star$-diagram examples involves taking pullbacks of the bundle \(P\) along a \(\star\)-diagram using appropriate smooth functions. Let \(M\) and \(N\) be \(G\)-manifolds, and let \(f: N \to M\) be a smooth equivariant map. Consider a \(\star\)-collection \(\{\phi_{ij} : U_{ij} \to G\}\) associated with the bundle \(\pi : P \to M\). The \textit{pullback bundle} \(f^*(P)\) over \(N\) has a total space given by
\[
f^*(P) := \{(n,p) \in N \times P \mid f(n) = \pi(p)\},
\]
and a projection \(\pi_f(n,p) := n\). The principal action induced on \(f^*(P)\) acts on the second coordinate only,
\[
s\bullet(n,p) := (n,\; s\bullet p), \quad \forall s \in G,
\]
which is well defined since \(\pi(s\bullet p) = \pi(p) = f(n)\), and it is free. The \(\star\)-action induced on \(f^*(P)\) is the diagonal one,
\[
r \star (n,p) := (r\cdot n,\; r\star p), \quad \forall r \in G,
\]
which maps into \(f^*(P)\) because the equivariance of \(f\) and of \(\pi\) gives \(f(r\cdot n) = r\cdot f(n) = r\cdot \pi(p) = \pi(r\star p)\); it is free because \(\star\) is free on \(P\), and it commutes with \(\bullet\). 

\begin{proposition}[Proposition 2.4 in \cite{SperancaCavenaghiPublished}]\label{prop:howtopullback}
Let \(f: N \to M\) be a smooth and \(G\)-equivariant map, and let \(M'\leftarrow P \rightarrow M\) be a \(\star\)-bundle obtained from a \(\star\)-collection given by \(\{\phi_{ij}: U_{ij} \to G\}\). Then, the following holds:
\begin{enumerate}
    \item The pullback bundle $\pi_f : f^{*}P \to N$ is equivariantly isomorphic to the $\star$-bundle associated with the pulled-back $\star$-collection given by \(\{\phi_{ij} \circ f: f^{-1}(U_{ij}) \to G\}\).
    %\item The projection \(\pi_f\) consists of a bundle that is equivariantly diffeomorphic to the \(\star\)-bundle obtained from the \(\star\)-collection given by \(\{\phi_{ij} \circ f: f^{-1}(U_{ij}) \to G\}\).
    \item The quotient \(f^*(P)/\star\) is equivariantly diffeomorphic to 

    \[
    N' = \bigcup_{\widehat{\phi_{ij} \circ f}} f^{-1}(U_i).
    \]
    \item A map \(f': N' \to M'\) is well defined and satisfies \(f'|_{f^{-1}(U_i)} = f|_{f^{-1}(U_i)}\).
\end{enumerate}
\end{proposition}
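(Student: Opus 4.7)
The plan is to reduce everything to Theorem~\ref{thm:star} applied to the pulled-back cocycle data on $N$. The first step is to verify that $\{\phi_{ij} \circ f : f^{-1}(U_{ij}) \to G\}$ is itself a $\star$-collection for the open cover $\{f^{-1}(U_i)\}$ of $N$. The cocycle condition~\eqref{eq:cocyclocondition} is immediate from composing \eqref{eq:cocyclocondition} for $\{\phi_{ij}\}$ with $f$, and the covariance condition~\eqref{eq:covariance} uses that $f$ is $G$-equivariant: for $n \in f^{-1}(U_{ij})$ and $g \in G$,
\[ (\phi_{ij} \circ f)(gn) \;=\; \phi_{ij}(g f(n)) \;=\; g\,\phi_{ij}(f(n))\,g^{-1} \;=\; g\,(\phi_{ij} \circ f)(n)\,g^{-1}. \]
Hence Theorem~\ref{thm:star} produces a principal $G$-$G$-bundle over $N$ whose $\star$-quotient is $N' := \bigcup_{\widehat{\phi_{ij} \circ f}} f^{-1}(U_i)$.

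For item~(1), I would show that this abstractly-constructed bundle is equivariantly diffeomorphic to $\pi_f : f^*(\mathcal{P}) \to N$ equipped with the principal and $\star$-actions written in the text. Working in the local trivializations $\mathcal{P}|_{U_i} \cong U_i \times G$ coming from the $\star$-construction in Theorem~\ref{thm:star}, one has $f^*(\mathcal{P})|_{f^{-1}(U_i)} \cong f^{-1}(U_i) \times G$ via $(n,(x,g)) \mapsto (n,g)$, and the transition functions between the resulting charts are obtained by pre-composing those of $\mathcal{P}$ with $f$; they coincide with the transition functions built from $\{\phi_{ij} \circ f\}$. The principal action $(n,p)s^{-1} = (n,sp)$ matches right translation on fibers tautologically, and the $\star$-action $r(n,p) = (rn,pr^{-1})$ matches the abstract $\star$-action assembled from $\{\phi_{ij} \circ f\}$ — this is the only identification that really requires care, since it involves tracing how the $\star$-action is defined inside the proof of Theorem~\ref{thm:star} via the twisted gluings $\widehat{\phi_{ij}\circ f}$.

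Item~(2) then follows at once: by Theorem~\ref{thm:star}\eqref{item:quotientopen} applied to $\{\phi_{ij} \circ f\}$, the $\star$-quotient of the abstract bundle is $N'$, and by item~(1) this agrees with $f^*(\mathcal{P})/\star$. For item~(3), define $f' : N' \to M'$ chart-by-chart via $f'|_{f^{-1}(U_i)} = f|_{f^{-1}(U_i)}$, composed with the inclusion $U_i \hookrightarrow M'$. Well-definedness on overlaps reduces to the identity
\[ f\bigl(\widehat{\phi_{ij}\circ f}(n)\bigr) \;=\; f\bigl(\phi_{ij}(f(n)) \cdot n\bigr) \;=\; \phi_{ij}(f(n)) \cdot f(n) \;=\; \widehat{\phi}_{ij}(f(n)), \]
where the middle equality is $G$-equivariance of $f$. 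Thus $f'$ glues to a smooth map on $N'$ with the required restriction property.

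The main technical obstacle is the bookkeeping in item~(1): matching the concrete realization of $f^*(\mathcal{P})$ as a subspace of $N \times \mathcal{P}$ with the abstract principal bundle produced by Theorem~\ref{thm:star} from the cocycle $\{\phi_{ij} \circ f\}$, and in particular verifying that the $\star$-action $r(n,p) = (rn, pr^{-1})$ corresponds to the $\star$-action built from the twisted gluings. Once the local model is fixed, the remaining verifications are routine naturality checks.
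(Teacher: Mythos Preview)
The paper does not include its own proof of this proposition; it is simply quoted as Proposition~2.4 of \cite{SperancaCavenaghiPublished} and used as a black box. Your argument is correct and is exactly the natural one: verify that $\{\phi_{ij}\circ f\}$ is a $\star$-collection on $N$ (cocycle by composition, covariance by equivariance of $f$), invoke Theorem~\ref{thm:star} on this new collection, match the abstract bundle with $f^*(\mathcal{P})$ via local trivializations, and check that $f'$ is well defined on overlaps via the identity $f(\widehat{\phi_{ij}\circ f}(n))=\widehat{\phi}_{ij}(f(n))$. There is nothing to compare against in this paper, but your outline is the standard proof and would be accepted as written, with the only genuine work being the bookkeeping you already flagged in item~(1).
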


\vspace{1em} 

\subsection{Riemannian submersions and \texorpdfstring{$\star$}{*}-diagrams}
\label{sec:diff-geo}
As discussed in \cite{SperancaCavenaghiPublished}, the geometries of \(M\) and \(M'\) can be compared through a \(G \times G\)-invariant Riemannian metric \(\mathsf{g}\) on the bundle \(P\), which we construct below starting from a \(G\)-invariant Riemannian metric \(\mathsf{g}_M\) on \(M\). Once \(\mathsf{g}\) is fixed, we let \(\mathcal{H}''\) denote the \emph{bihorizontal distribution}, defined pointwise by
\[
\mathcal{H}''_p = \{X \in T_pP \mid X \perp T_p((G \times G)p)\},
\]
the orthogonal complement being taken with respect to \(\mathsf{g}\). Following Section 5 of \cite{SperancaCavenaghiPublished}, the metric \(\mathsf{g}\) induces a Riemannian metric \(\mathsf{g}_{M'}\) on \(M'\) such that both projections \(\pi\) and \(\pi'\) are Riemannian submersions. Writing \(\mathcal{H} \subseteq TM\) and \(\mathcal{H}' \subseteq TM'\) for the horizontal distributions of vectors orthogonal to the \(G\)-orbits on \(M\) and on \(M'\), respectively, we shall verify that the restrictions
\[
\mathrm{d}\pi|_{\mathcal{H}''}:(\mathcal{H}'',\mathsf{g}) \rightarrow (\mathcal{H},\mathsf{g}_M),
\qquad
\mathrm{d}\pi'|_{\mathcal{H}''}:(\mathcal{H}'',\mathsf{g}) \rightarrow (\mathcal{H}',\mathsf{g}_{M'})
\]
are isometries. We now carry out this construction.

\vspace{1em}

Start with a $G$-invariant Riemannian metric $\mathsf{g}_M$ on $M$. To construct \(\mathsf{g}_{M'}\), we recall that a connection 1-form on a \(G\)-principal bundle is a differential 1-form \(\omega_0: TP \to \mathfrak{g}\) satisfying the following properties: for every \(\xi \in \mathfrak{g}\), \(X \in TP\), and \(g \in G\), we have
\begin{itemize}
    \item \((\omega_0)_p(\xi^{\bullet}) = \xi\), where $\xi^{\bullet}$ denotes the action field of $\xi$ for the principal action (see Equation \eqref{eq:action} below);
    \item \((\omega_{0})_{p\cdot s}\big(X\cdot s\big) = \operatorname{Ad}_{s^{-1}}(\omega_0)_p(X)\), where $p\cdot s$ denotes the principal action written on the right, as in the juxtaposition convention of Theorem \ref{thm:star}.
\end{itemize}
Averaging \(\omega_0\) over the commuting \(\star\)-action,
\[
\omega := \int_G (\mathrm{L}^\star_r)^{*} \omega_0 \, \mathrm{d}\mu(r),
\]
with $\mathrm{d}\mu$ the normalized Haar measure of the compact group $G$ and $\mathrm{L}^\star_r(p)=r\star p$, again yields a principal connection for the $\bullet$-action — the two defining properties are preserved because $\star$ commutes with $\bullet$, hence $(\mathrm{L}^\star_r)_*\xi^{\bullet}=\xi^{\bullet}$ and $\mathrm{L}^\star_r$ intertwines the principal action with itself — and $\omega$ is in addition $\star$-invariant: $(\mathrm{L}^\star_r)^{*}\omega = \omega$ for all $r \in G$, by invariance of the Haar measure. Thus, if \(\mathsf{g}_M\) is a \(G\)-invariant metric on \(M\), and \(Q\) is an \(\mathrm{Ad}\)-invariant inner product on the Lie algebra \(\mathfrak{g}\), we define a \(G \times G\)-invariant Kaluza--Klein metric on \(P\) as
\[
\mathsf{g} := \pi^{\ast}\mathsf{g}_M + Q(\omega, \omega).
\]
The metric $\mathsf{g}$ is indeed $G\times G$-invariant. It is $\star$-invariant because $\pi$ is $\star$-equivariant and $\mathsf{g}_M$ is $G$-invariant, so that $\pi^{*}\mathsf{g}_M$ is $\star$-invariant, while $Q(\omega,\omega)$ is $\star$-invariant by the averaging performed above. It is invariant under the principal action because $\pi^{*}\mathsf{g}_M$ is, the principal orbits being the fibers of $\pi$, and because $Q$ is $\mathrm{Ad}$-invariant, which together with the second connection axiom gives \[ Q\big(\omega_{p\cdot s}(X\cdot s),\,\omega_{p\cdot s}(Y\cdot s)\big) = Q\big(\mathrm{Ad}_{s^{-1}}\omega_p(X),\,\mathrm{Ad}_{s^{-1}}\omega_p(Y)\big) = Q\big(\omega_p(X),\,\omega_p(Y)\big). \]  The quotient metric $\mathsf{g}_{M'}$ is then defined as the unique metric for which $\pi'$ is a Riemannian submersion: for $v, w \in T_{x'}M'$ we set $\mathsf{g}_{M'}(v, w) := \mathsf{g}_p(v^{\mathrm{hor}}_p, w^{\mathrm{hor}}_p)$, where $p \in (\pi')^{-1}(x')$ and $v^{\mathrm{hor}}_p, w^{\mathrm{hor}}_p \in (\ker \mathrm{d}\pi'_p)^{\perp} \subseteq T_pP$ are the unique horizontal lifts. Independence of the representative $p$ follows from the $\star$-invariance of $\mathsf{g}$ just established. By construction, $\pi'$ is a Riemannian submersion; and so is $\pi$, since $\bar{\mathcal H}=\ker\omega=(\ker\mathrm{d}\pi)^{\perp}$ and $\mathrm{d}\pi|_{\bar{\mathcal H}_p}:(\bar{\mathcal H}_p,\mathsf{g})\to (T_{\pi(p)}M,\mathsf{g}_M)$ is an isometry, directly from the definition of $\mathsf{g}=\pi^{*}\mathsf{g}_M+Q(\omega,\omega)$.  Finally, we verify the assertion made above concerning $\mathcal H''$. By definition, $\mathcal{H}''_p$ consists of the vectors orthogonal to the whole juxtaposed orbit, hence in particular to the principal orbit, so $\mathcal{H}''\subseteq \bar{\mathcal H}$; the computation carried out in the proof of Proposition \ref{prop:fibradonormal} below shows that $\mathrm{d}\pi$ maps $\mathcal{H}''_p$ bijectively onto $\mathcal H_{\pi(p)}$, and since $\mathrm{d}\pi|_{\bar{\mathcal H}}$ is an isometry, so is $\mathrm{d}\pi|_{\mathcal{H}''}:(\mathcal{H}'',\mathsf{g})\to(\mathcal{H},\mathsf{g}_M)$. The same argument with the roles of the two actions interchanged, using the definition of $\mathsf{g}_{M'}$, shows that $\mathrm{d}\pi'|_{\mathcal{H}''}:(\mathcal{H}'',\mathsf{g})\to(\mathcal{H}',\mathsf{g}_{M'})$ is an isometry. Henceforth we assume that \(M\), \(M'\), and \(P\) are equipped with these invariant Riemannian metrics.

\vspace{1em} 

Define \(\bar{\mathcal{H}} := \ker \omega\) and choose \(x' \in \pi'(\pi^{-1}(x))\) for some fixed \(x \in M\). Let \(\nu Gx\) and \(\nu Gx'\) represent the normal bundles of the \(G\)-orbits through \(x\) and \(x'\), respectively. To set the stage for the proof of many of the coming results, we revisit Proposition 5.3 in \cite{SperancaCavenaghiPublished}.

\begin{proposition}[Proposition 5.3 in \cite{SperancaCavenaghiPublished}]\label{prop:fibradonormal}
Given \(x \in M\), there exists \(x' \in \pi'(\pi^{-1}(x))\) and an isomorphism \(\Phi: \nu Gx \to \nu Gx'\) such that, for any open set \(\mathcal{O} \subseteq \nu Gx\), if \(\exp|_{\mathcal{O}}: \mathcal{O} \to M\) is a diffeomorphism onto its image, then \(\exp|_{\Phi(\mathcal{O})}: \Phi(\mathcal{O}) \to M'\) is also a diffeomorphism onto its image.
\end{proposition}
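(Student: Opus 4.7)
The plan is to build the isomorphism $\Phi$ pointwise from the bihorizontal structure on $\mathcal{P}$, extend it along the $G$-orbits by equivariance, and then derive the exponential-map statement from the fact that both $\pi$ and $\pi'$ are Riemannian submersions with the same bihorizontal distribution $\mathcal{H}''$.

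First, fix any $p\in\pi^{-1}(x)$ and set $x':=\pi'(p)$. Because the metric $\mathsf{g}$ on $\mathcal{P}$ is $G\times G$-invariant, the bihorizontal distribution $\mathcal{H}''$ is $G\times G$-invariant, and by construction both $\mathrm{d}\pi_p|_{\mathcal{H}''_p}\colon\mathcal{H}''_p\to\mathcal{H}_x=\nu_xGx$ and $\mathrm{d}\pi'_p|_{\mathcal{H}''_p}\colon\mathcal{H}''_p\to\mathcal{H}'_{x'}=\nu_{x'}Gx'$ are linear isometries. Define the linear isometry
\[
\Phi_p\;:=\;\mathrm{d}\pi'_p\circ\bigl(\mathrm{d}\pi_p|_{\mathcal{H}''_p}\bigr)^{-1}\colon\nu_xGx\longrightarrow\nu_{x'}Gx',
\]
and extend it to a bundle map $\Phi\colon\nu Gx\to\nu Gx'$ using the induced $G$-actions on $M$ and $M'$ (coming from the $\star$ and $\bullet$ actions on $\mathcal{P}$, respectively). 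Item \ref{item:isotropy} of \cref{thm:star} ensures that any reparametrisation ambiguity coming from the isotropy is an inner conjugation in $G$, so $\Phi$ is a well-defined $G$-equivariant bundle isometry.

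Next, I would prove that the normal exponential maps in $M$ and $M'$ factor through a common bihorizontal exponential in $\mathcal{P}$. For $v\in\nu_xGx$, lift to $\tilde v\in\mathcal{H}''_p$ by $(\mathrm{d}\pi_p|_{\mathcal{H}''_p})^{-1}$, and let $\widetilde{\exp}_p$ denote the exponential map of $(\mathcal{P},\mathsf{g})$. Since $\pi$ is a Riemannian submersion, $\pi\bigl(\widetilde{\exp}_p(t\tilde v)\bigr)=\exp_x(tv)$, and the same argument for $\pi'$ yields $\pi'\bigl(\widetilde{\exp}_p(t\tilde v)\bigr)=\exp_{x'}\bigl(t\Phi_p(v)\bigr)$. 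Together with the $G$-equivariance of $\Phi$, this gives the commutative identity $\pi'\circ\widetilde{\exp}\circ(\mathrm{d}\pi|_{\mathcal{H}''})^{-1}=\exp_{M'}\circ\Phi$ on a neighbourhood of the zero section.

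Finally, suppose $\mathcal{O}\subseteq\nu Gx$ is open and $\exp|_\mathcal{O}$ is a diffeomorphism onto its image. Pull $\mathcal{O}$ back to an open set $\widetilde{\mathcal{O}}\subseteq\mathcal{H}''|_{\pi^{-1}(Gx)}$; the commuting diagram from the previous step, together with the free principal $\bullet$-action, forces $\widetilde{\exp}|_{\widetilde{\mathcal{O}}}$ to be a diffeomorphism onto a $\bullet$-saturated open set of $\mathcal{P}$. Since the $\star$-action on $\mathcal{P}$ is also free and commutes with $\bullet$, the same set is $\star$-saturated, and its image under $\pi'$ is precisely $\exp(\Phi(\mathcal{O}))$, with $\exp|_{\Phi(\mathcal{O})}$ inheriting both injectivity and the local-diffeomorphism property. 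I expect the main technical obstacle to be the global injectivity in this last step: one must rule out the possibility that distinct points of $\Phi(\mathcal{O})$ map to the same point in $M'$ without the corresponding bihorizontal lifts lying in a common $\star$-orbit. Item \ref{item:isotropy} of \cref{thm:star}, which identifies the $G\times G$-isotropy on $\mathcal{P}$ with a graph over the $G$-isotropy on $M$, is precisely what eliminates this, while the smoothness and local-diffeomorphism parts reduce to routine bookkeeping with equivariant exponentials.
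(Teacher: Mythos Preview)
Your overall architecture—lift to the bihorizontal distribution $\mathcal{H}''$, push down via $\mathrm{d}\pi'$, and factor the normal exponentials through the exponential of $\mathcal{P}$—is exactly the paper's approach. Two points, however, are not yet right.

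First, you cannot take ``any $p\in\pi^{-1}(x)$''. The paper uses item \ref{item:isotropy} of \cref{thm:star} to pick the specific $p$ with $(G\times G)_p=\Delta G_x$, and then builds an explicit equivariant section $\Psi:Gx\to\mathcal P$, $\Psi(rx)=rpr^{-1}$. Without the diagonal isotropy this formula is not well-defined, and without $\Psi$ you have no coherent way to say which point of $Gx'$ the point $rx\in Gx$ corresponds to; your phrase ``extend using the induced $G$-actions'' hides exactly this choice. The bundle isometry $\Phi$ is then $\Phi_{rx}(X)=\mathrm{d}\pi'(\mathcal L_{\Psi(rx)}(X))$, with basepoint correspondence $rx\mapsto\pi'(\Psi(rx))$.

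Second, and more seriously, your injectivity argument fails. The set $\widetilde{\exp}(\widetilde{\mathcal O})$ is $\bullet$-saturated, but it is \emph{not} $\star$-saturated: for $\tilde v\in\widetilde{\mathcal O}$ over $v\in\mathcal O$, the $\star$-translate $r\tilde v$ sits over $rv$, and nothing forces $rv\in\mathcal O$ since $\mathcal O$ is an arbitrary open set. Freeness and commutativity of the two actions do not help here. The paper's mechanism is different: it packages the lift into a trivialization $\tilde\Psi:\exp(\mathcal O)\times G\to\mathcal P$ satisfying $\tilde\Psi(rm,sgr^{-1})=r\,\tilde\Psi(m,g)\,s^{-1}$, and shows the section $\iota(m)=\tilde\Psi(m,e)$ meets each $\star$-orbit at most once. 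If $\iota(m_2)=r\star\iota(m_1)$, applying $\pi$ gives $m_2=rm_1\in\exp(\mathcal O)$; the equivariance identity then reads $\tilde\Psi(rm_1,r^{-1})=\tilde\Psi(rm_1,e)$, whence $r=e$ by injectivity of $\tilde\Psi$. That is the step your saturation claim was meant to replace, and it genuinely requires the section $\Psi$ built from the diagonal-isotropy point.
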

\begin{proof}
	According to item \eqref{item:isotropy} of Theorem \ref{thm:star}, for any $x \in M$, there exists $p' \in \pi^{-1}(x)$ and $g \in G$ such that $(G \times G)_{p'} = \{(h, ghg^{-1}) \mid h \in G_x\}$. Translating along the fiber by the principal element $(e,g^{-1})$, that is, setting $p := (e,g^{-1})\cdot p' = p'g \in \pi^{-1}(x)$, conjugates the isotropy to the diagonal: indeed $(e,g^{-1})(h,ghg^{-1})(e,g) = (h,h)$, so $(G\times G)_p = \Delta G_x = \{(q,q) \mid q\in G_x\}$. Set $x'=\pi'(p)$.
	
	Consider the map $\Psi: Gx\to P$, $\Psi(rx)=rpr^{-1}$, which is well defined since $(G{\times}G)_p=\Delta G_x$: if $rx=r'x$, then $r^{-1}r'\in G_x$, so $(r^{-1}r',r^{-1}r')\in (G\times G)_p$ and therefore $r'p\,r'^{-1}=r\,p\,r^{-1}$. Given $y\in Gx$, $X\in T_yM$ and $q\in \pi^{-1}(y)$, denote by $\mathcal L_q(X)\in \bar{\mathcal H}_q$ the unique bundle-horizontal vector with $\mathrm{d}\pi(\mathcal L_q(X))=X$. Both maps are equivariant:
	\begin{equation}\label{eq:equivariances}
	\Psi(ry)=r\Psi(y)r^{-1},\qquad \mathcal L_{rqs^{-1}}(rX)=r\,\mathcal L_q(X)\,s^{-1}.
	\end{equation}
	The first identity is immediate from the definition: for $y=tx$ one has $\Psi(rtx)=(rt)p(rt)^{-1}=r\big(tpt^{-1}\big)r^{-1}$. For the second, recall that the juxtaposed action preserves $\bar{\mathcal H}=\ker\omega$ — the principal directions by the second connection axiom, and the $\star$-directions because $\omega$ is $\star$-invariant — so $r\,\mathcal L_q(X)\,s^{-1}\in\bar{\mathcal H}_{rqs^{-1}}$; moreover, $\pi$ is $\star$-equivariant and constant along principal orbits, whence
	\[
	\mathrm{d}\pi\big(r\,\mathcal L_q(X)\,s^{-1}\big)=r\cdot\mathrm{d}\pi\big(\mathcal L_q(X)\big)=rX .
	\]
	Since the horizontal lift at a point is unique, the second identity follows.

 Now define $\Phi: \nu Gx\to \nu Gx'$ as
	\begin{equation}\label{eq:Phi}
	\Phi_{rx}(X)=\mathrm{d}\pi'(\mathcal L_{\Psi(rx)}(X)).
	\end{equation}
 We claim that $\Phi_{rx}$ defines an isometry between $\mathcal{H}_{rx}$ and $\mathcal H'_{rx'}$. Since both $\mathcal L_q$ and $\mathrm{d}\pi'|_{\mathcal H''}$ are isometries it suffices to show that for every $q$ it holds that $\mathcal L_q(\mathcal H_x)=\mathcal H''_q$ . 
 
 Fix $q\in\pi^{-1}(x)$ and let $X\in\mathcal H_x$, so that $X\perp T_x(Gx)$. We show that $\mathcal L_q(X)\perp T_q\big((G\times G)q\big)$. The tangent space to the juxtaposed orbit decomposes as \[ T_q\big((G\times G)q\big)=T_q\big(\star\text{-orbit}\big)+T_q\big(\bullet\text{-orbit}\big)=\{\xi^\star_q:\xi\in\mathfrak g\}+\{\eta^\bullet_q:\eta\in\mathfrak g\}, \] where $\bullet$-directions are the vertical space $\ker\mathrm d\pi_q$ of the principal bundle. Since $\mathcal L_q(X)\in\bar{\mathcal H}_q=\ker\omega_q$ and, by the definition of the Kaluza--Klein metric, $\bar{\mathcal H}_q=(\ker \mathrm d\pi_q)^{\perp}$, the vector $\mathcal L_q(X)$ is orthogonal to every $\eta^\bullet_q$. For the $\star$-directions, note that $\pi$ is equivariant, so $\mathrm d\pi_q(\xi^\star_q)=\xi^{\,\cdot}_{x}$, the action field of the $G$-action on $M$; decomposing $\xi^\star_q=\mathcal L_q\big(\xi^{\,\cdot}_x\big)+ \text{(a $\bullet$-vertical vector)}$ and using that $\mathrm d\pi|_{\bar{\mathcal H}_q}$ is an isometry onto $(T_xM,\mathsf g_M)$ together with the orthogonality of $\mathcal L_q(X)$ to the vertical space, we obtain \[ \mathsf g\big(\mathcal L_q(X),\,\xi^\star_q\big)=\mathsf g\big(\mathcal L_q(X),\,\mathcal L_q(\xi^{\,\cdot}_x)\big)=\mathsf g_M\big(X,\,\xi^{\,\cdot}_x\big)=0, \] the last equality because $X\perp T_x(Gx)$ and the $\xi^{\,\cdot}_x$ span $T_x(Gx)$. Hence $\mathcal L_q(\mathcal H_x)\subseteq\mathcal H''_q$. Since $\mathcal L_q$ is injective and, by item \eqref{item:isotropy} of Theorem \ref{thm:star}, $\dim (G\times G)q=\dim G+\dim Gx$, so that \[ \dim\mathcal H''_q=\dim P-\dim G-\dim Gx=\dim M-\dim Gx=\dim\mathcal H_x, \] the inclusion is an equality: $\mathcal L_q(\mathcal H_x)=\mathcal H''_q$.

	Let $\mathcal O\subseteq \nu Gx$ be such that $\exp|_\mathcal O$ is a diffeomorphism. Thus $\tilde\Psi: \exp(\mathcal O)\times G\to P$,
	\begin{equation}\label{eq:tildePsi}\tilde\Psi(\exp_{rx}(v),g)=\exp_{\Psi(rx)}(\mathcal L_{\Psi(rx)}(v))g^{-1},\end{equation}
	is a trivialization for $\pi$ along $\exp(\mathcal O)$. Moreover, for every $y\in Gx$,
	\begin{align}
	\tilde\Psi(\exp_{ry}(rv),sgr^{-1})&=\exp_{\Psi(ry)}(\mathcal L_{\Psi(ry)}(rv))r(sg)^{-1}=\exp_{\Psi(ry)}(\mathcal L_{r\Psi(y)r^{-1}}(rv))r(sg)^{-1} \nonumber\\\label{eq:eqvPhi}&=\exp_{\Psi(ry)}(r\mathcal L_{\Psi(y)}(v)r^{-1})r(sg)^{-1}=r\tilde\Psi(\exp_{y}(v),g) s^{-1}.
	\end{align}
Hence the image of the section $\iota:\exp(\mathcal O)\to P$, $\iota(\exp_y(v))=\tilde\Psi(\exp_y(v),e)$, meets each $\{e\}{\times}G$-orbit and each $G{\times}\{e\}$-orbit at most once. In particular, the map
	\[\exp_y(v)\mapsto \pi'(\tilde\Psi(\exp_y(v),e)) \]
is injective: two points of $\exp(\mathcal O)$ with the same image under $\pi'\circ\iota$ would place two points of the image of $\iota$ on a single $\star$-orbit. It is, moreover, an immersion. Indeed, taking $s=r$ and $g=e$ in \eqref{eq:eqvPhi} yields the diagonal equivariance $\iota(r\cdot z)=(r,r)\cdot\iota(z)$; differentiating along $r=\exp(t\eta)$, $\eta\in\mathfrak g$, gives $\mathrm{d}\iota\big(\eta^{\,\cdot}_z\big)=\eta^{\star}_{\iota(z)}-\eta^{\bullet}_{\iota(z)}$. Now let $W\in T_{\iota(z)}\big(\iota(\exp(\mathcal O))\big)\cap\ker\mathrm{d}\pi'$. Since $\star$ is free, $\ker\mathrm{d}\pi'_{\iota(z)}=\{\eta^{\star}_{\iota(z)}:\eta\in\mathfrak g\}$, so $W=\eta^{\star}_{\iota(z)}$ for some $\eta$; since $\pi\circ\iota=\mathrm{id}$ and $\mathrm{d}\pi\big(\eta^{\star}_{\iota(z)}\big)=\eta^{\,\cdot}_z$, the unique $\mathrm{d}\iota$-preimage of $W$ is $\eta^{\,\cdot}_z$, whence $W=\mathrm{d}\iota\big(\eta^{\,\cdot}_z\big)=\eta^{\star}_{\iota(z)}-\eta^{\bullet}_{\iota(z)}$. Comparing the two expressions gives $\eta^{\bullet}_{\iota(z)}=0$, and freeness of the principal action forces $\eta=0$, so $W=0$. Thus $\pi'\circ\iota$ is an injective immersion between manifolds of equal dimension, hence a diffeomorphism from $\exp(\mathcal O)$ onto the open subset $\pi'\big(\tilde{\Psi}(\exp(\mathcal O)\times\{e\})\big)$ of $M'$. On the other hand, $\mathcal L_{\Psi(rx)}(v)\in\mathcal H''\subseteq(\ker \mathrm{d}\pi')^{\bot}$ by the identification $\mathcal L_q(\mathcal H_x)=\mathcal H''_q$ established above; since $\pi'$ is a Riemannian submersion, it carries horizontal geodesics to geodesics, so that $\pi'\circ\exp=\exp\circ\,\mathrm{d}\pi'$ on horizontal vectors, and \begin{align}
	\pi'(\tilde\Psi(\exp_{rx}(v),e)) &= \pi'\exp_{\Psi(rx)}(\mathcal L_{\Psi(rx)}(v)) \nonumber \\
    &= \exp_{\pi'(\Psi(rx))}(\mathrm{d}\pi'\mathcal L_{\Psi(rx)}(v)) \nonumber \\
    &= \exp_{rx'}(\Phi(v)).
	\end{align}
	We conclude that $\pi' \circ \iota = \exp \circ \Phi \circ (\exp|_{\mathcal O})^{-1}$ on $\exp(\mathcal O)$. The left-hand side is a diffeomorphism onto its open image, and $\Phi\circ(\exp|_{\mathcal O})^{-1}$ is a diffeomorphism from $\exp(\mathcal O)$ onto $\Phi(\mathcal O)$, the map $\Phi$ being smooth — smoothness of $\Psi$ descends from the smooth map $r\mapsto (r,r)\cdot p$ through the submersion $G\to Gx$ — and a linear isometry on each fiber. Therefore $\exp|_{\Phi(\mathcal O)}:\Phi(\mathcal O)\to M'$ is a diffeomorphism onto its image, as claimed.
 \end{proof}

A notable consequence of Proposition \ref{prop:fibradonormal} is presented below: the construction of $\star$-diagrams reduces to the search for commuting \emph{$\star$-actions} satisfying the isotropy property \eqref{item:isotropy} of Theorem \ref{thm:star}, since every such action arises from a $\star$-collection. Theorem \ref{thm:star} and Theorem \ref{thm:equivalentstar} are thus converse to one another.

\begin{theorem}\label{thm:equivalentstar}
Let $G$ be a compact Lie group, let $\pi:P\to M$ be a $G$-principal bundle, and let $P$ carry a further smooth $G$-action $\star$ commuting with the principal one. Suppose the juxtaposed $G\times G$-action $(r,s)\cdot p=r\star p\cdot s^{-1}$ satisfies the isotropy condition \eqref{item:isotropy} of Theorem \ref{thm:star}. Then there exist a $G$-invariant open cover $\{U_i\}_{i\in I}$ of $M$ and local trivializations of $P$ over it whose transition functions $\{\phi_{ij}\}$ satisfy the cocycle condition and the covariance condition \eqref{eq:covariance}, that is, they form a $\star$-collection for $P$. Moreover, the given $\star$-action coincides with the one determined chartwise by this collection, by \eqref{eq:star-chartwise-def}, and
\[
M'\;\longleftarrow\;P\;\longrightarrow\;M,\qquad M':=P/\star,
\]
is a $\star$-diagram.
\end{theorem}
\begin{proof}
Equip $M$ with a $G$-invariant metric and $P$ with the associated $G\times G$-invariant Kaluza--Klein metric of Section \ref{sec:diff-geo}, so that $\pi$ is a Riemannian submersion and the exponential maps are equivariant. Throughout we write $(r,s)\cdot p=r\star p\cdot s^{-1}$ for the juxtaposed action and $p\cdot a$ for the principal one.

 For $x\in M$, the orbit $Gx$ is a compact embedded submanifold, and by the equivariant tubular neighborhood theorem \cite[Ch.~VI]{brebook} there is a $G$-invariant open neighborhood $\mathcal{O}_x\subseteq\nu Gx$ of the zero section such that $\exp|_{\mathcal{O}_x}$ is a diffeomorphism onto an open tubular neighborhood $U_x:=\exp(\mathcal{O}_x)$ of $Gx$. Since $\exp$ is equivariant, $U_x$ is $G$-invariant. The sets $U_x$ depend only on the orbit of $x$; choose a subfamily $\{U_i\}_{i\in I}$ covering $M$.

 By Proposition \ref{prop:fibradonormal} applied to each $\mathcal{O}_i$, the bundle $P$ admits over $U_i$ the trivialization $\tilde\Psi_i$ of \eqref{eq:tildePsi}, which satisfies \eqref{eq:eqvPhi}:
\begin{equation}\label{eq:eqv-used}
\tilde\Psi_i(r\cdot z,\;s\,g\,r^{-1})\;=\;(r,s)\cdot\tilde\Psi_i(z,\,g),\qquad r,s,g\in G,\ z\in U_i .
\end{equation}
Set $\sigma_i:=\tilde\Psi_i(\cdot\,,e)$, a smooth local section of $\pi$ over $U_i$, so that $\tilde\Psi_i(z,g)=\sigma_i(z)\cdot g^{-1}$. Taking $s=e$ and $g=r$ in \eqref{eq:eqv-used},
\begin{equation}\label{eq:sigma-diagonal}
\sigma_i(r\cdot z)=r\star\big(\sigma_i(z)\cdot r^{-1}\big)=\big(r\star\sigma_i(z)\big)\cdot r^{-1}=(r,r)\cdot\sigma_i(z),
\end{equation}
i.e. each $\sigma_i$ is equivariant for the diagonal action.

 For $i,j\in I$ and $z\in U_i\cap U_j$, the points $\sigma_i(z)$ and $\sigma_j(z)$ lie in the same $\pi$-fiber, on which the principal action is simply transitive; let $\phi_{ji}(z)\in G$ be the unique element with
\[
\sigma_i(z)=\sigma_j(z)\cdot\phi_{ji}(z).
\]
Then $\phi_{ji}$ is smooth, being the composite of $(\sigma_j,\sigma_i)$ with the division map of the principal bundle, and $\sigma_i=\sigma_k\cdot\phi_{ki}$ together with $\sigma_i=\sigma_j\cdot\phi_{ji}$ and $\sigma_j=\sigma_k\cdot\phi_{kj}$ gives the cocycle condition $\phi_{ki}=\phi_{kj}\,\phi_{ji}$ on triple overlaps. The overlap $U_i\cap U_j$ is $G$-invariant, and for $r\in G$ we compute, using \eqref{eq:sigma-diagonal} and the commutativity of the two actions,
\begin{align}
\sigma_j(r\cdot z)\cdot\phi_{ji}(r\cdot z)
&=\sigma_i(r\cdot z)=\big(r\star\sigma_i(z)\big)\cdot r^{-1}
=\Big(r\star\big(\sigma_j(z)\cdot\phi_{ji}(z)\big)\Big)\cdot r^{-1}\nonumber\\
&=\big(r\star\sigma_j(z)\big)\cdot\phi_{ji}(z)\,r^{-1}
=\Big[\big(r\star\sigma_j(z)\big)\cdot r^{-1}\Big]\cdot\big(r\,\phi_{ji}(z)\,r^{-1}\big)\nonumber\\
&=\sigma_j(r\cdot z)\cdot\big(r\,\phi_{ji}(z)\,r^{-1}\big).
\end{align}
Since the principal action is free, we conclude the covariance condition \eqref{eq:covariance}:
\[
\phi_{ji}(r\cdot z)=r\,\phi_{ji}(z)\,r^{-1},\qquad r\in G,\ z\in U_i\cap U_j .
\]
The normalization conditions also hold: $\sigma_i=\sigma_i\cdot\phi_{ii}$ forces $\phi_{ii}\equiv e$, and $\sigma_i=\sigma_j\cdot\phi_{ji}$ together with $\sigma_j=\sigma_i\cdot\phi_{ij}$ forces $\phi_{ij}=\phi_{ji}^{-1}$, by freeness of the principal action. Hence $\{\phi_{ij}\}$ is a $\star$-collection for $P$ relative to the invariant cover $\{U_i\}$.

 Rewriting \eqref{eq:eqv-used} with $s=e$ in the chart coordinates $(z,g)$ of $\tilde\Psi_i$, the given $\star$-action reads
\begin{equation}\label{eq:star-chartwise}
r\star(z,\,g)=\big(r\cdot z,\;g\,r^{-1}\big),
\end{equation}
which is precisely the action determined chartwise by the collection $\{\phi_{ij}\}$; the covariance condition is exactly what makes the chartwise formulas agree on overlaps, since
\[
\tilde\Psi_j^{-1}\big(r\star\tilde\Psi_i(z,g)\big)=\big(r\cdot z,\;g\,r^{-1}\phi_{ji}(r\cdot z)^{-1}\big)=\big(r\cdot z,\;g\,\phi_{ji}(z)^{-1}r^{-1}\big)=r\star\Big(\tilde\Psi_j^{-1}\tilde\Psi_i(z,g)\Big).
\]
By \eqref{eq:star-chartwise}, a fixed point of $r\star$ forces $g\,r^{-1}=g$, hence $r=e$: the $\star$-action is free — as it is for every $\star$-collection, by \eqref{eq:star-chartwise-def}. Since $G$ is compact, this free action is proper, so $M'=P/\star$ is a smooth manifold and $\pi':P\to M'$ is a $G$-principal bundle; alternatively, this is Theorem \ref{thm:star} applied to the collection $\{\phi_{ij}\}$. The two actions are free, commute, and have quotients $M$ and $M'$; this is the datum of the $\star$-diagram $M'\leftarrow P\rightarrow M$.
\end{proof}

\vspace{1em}

The two base manifolds of a $\star$-diagram therefore have closely related invariant geometries. On the one hand, Proposition \ref{prop:fibradonormal} produces, for each $x\in M$, a point $x'\in M'$ and an isomorphism $\Phi:\nu Gx\to\nu Gx'$ of normal bundles which intertwines the isotropy representations, the isotropy groups themselves being related by $G_{x'}=g\,G_x\,g^{-1}$ for the element $g$ of \eqref{item:isotropy}; by the Slice Theorem \cite[Theorem 3.82]{alexandrino2015lie}, the $G$-manifolds $M$ and $M'$ have the same orbit types, with matching slice representations.  On the other hand, they share the same transversal geometry. Since $\pi$ and $\pi'$ are $G$-equivariant surjective submersions with $\pi^{-1}(Gx)=(G\times G)p=\pi'^{-1}(Gx')$ for $p\in\pi^{-1}(x)$ and $x'=\pi'(p)$, both induce the same identification of orbit spaces, \[ M/G\;\cong\;P/(G\times G)\;\cong\;M'/G . \] This identification is an isometry for the quotient metrics: the restrictions $\mathrm{d}\pi|_{\mathcal H''}$ and $\mathrm{d}\pi'|_{\mathcal H''}$ are isometries onto $\mathcal H$ and $\mathcal H'$ (Section \ref{sec:diff-geo}), so the correspondence $\mathcal{H} \leftarrow \mathcal{H}'' \rightarrow \mathcal{H}'$ carries horizontal curves in $M$ to horizontal curves in $M'$ of the same length, and conversely; taking infima over such curves gives $\mathrm{d}_{M/G}=\mathrm{d}_{M'/G}$ under the identification above. In particular, horizontal geodesics correspond to horizontal geodesics. 

\vspace{1em} 

\subsection{\texorpdfstring{$\star$}{*}-diagrams and Morita equivalence}  
\label{sec:moritaequive}

This section examines the extent to which the two base manifolds of a $\star$-diagram are equivalent from the point of view of their $G$-invariant structure. We do so through two notions of Morita equivalence: one for Lie groupoids and one for singular foliations. The upshot is that, although $M$ and $M'$ may fail to be homeomorphic, their orbit foliations are Hausdorff--Morita equivalent and their action groupoids are Morita equivalent; consequently the invariants attached to these objects cannot distinguish $M$ from $M'$. Any obstruction to $M\cong M'$ must therefore be sought elsewhere.

\vspace{1em} 

Following the terminology in \cite{Garmendia2019}, a \emph{singular foliation} on a manifold $X$ is a $C^\infty(X)$-submodule $\mathcal F$ of the compactly supported vector fields $\mathfrak{X}_c(X)$, closed under the Lie bracket and locally finitely
generated. A \emph{foliated manifold} is a manifold with a singular foliation.

A smooth manifold $X$ with a proper action of a connected Lie group $G$ is a foliated manifold: the $C^\infty(X)$-module \[ \mathcal{F}_G:=\Big\langle\, U^*\;:\;U\in\mathfrak{g}\,\Big\rangle_{C^\infty_c(X)}\subseteq\mathfrak{X}_c(X) \] generated by the action vector fields is involutive, by $[U^*,V^*]=-[U,V]^*$, and finitely generated, a basis of $\mathfrak{g}$ providing global generators; we call it the \emph{orbit foliation} of the action, its leaves being the $G$-orbits. Specifically, if $\mathfrak{g}$ denotes the Lie algebra of the Lie group $G$ and $\exp: \mathfrak{g} \to G$ is the Lie exponential map, the action vector field $U^*$ associated with $U \in \mathfrak{g}$ is defined at any $x \in X$ by
\begin{equation}\label{eq:action}
U^*_x = \left.\frac{d}{dt}\right|_{t=0} \exp(tU) \cdot x,
\end{equation}
where $\cdot$ denotes the $G$-action on $X$.

Given a $\star$-diagram $M \leftarrow P \rightarrow M'$ with structure group $G$, recall that, because the two actions commute, each descends to the quotient by the other: the $\star$-action descends to $M=P/\bullet$ through $r\cdot\pi(p):=\pi(r\star p)$, and the $\bullet$-action descends to $M'=P/\star$ through $s\odot\pi'(p):=\pi'(p\cdot s^{-1})$. Both $M$ and $M'$ are thus $G$-manifolds, and we always regard them as such. Next, we show the existence (Theorem \ref{thm:starsaremorita}) of a \emph{Hausdorff--Morita equivalence} (Definition 2.1 in \cite{Garmendia2019}) between the induced singular foliations on the two $G$-manifolds $M$ and $M'$.  

\begin{theorem}\label{thm:starsaremorita} 
Any $\star$-bundle $M \leftarrow P \rightarrow M'$ with connected structure group $G$ is an example of a Hausdorff--Morita equivalence. Specifically, the singular foliation induced by the $G$-orbits on $M$ and $M'$ can be pulled back to isomorphic singular foliations in $P$ via the corresponding projections. 
\end{theorem}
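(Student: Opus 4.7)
The plan is to verify directly the definition of Hausdorff–Morita equivalence by exhibiting the required surjective submersions and showing that the pulled-back singular foliations coincide. The manifold $\mathcal{P}$ itself will serve as the linking object, with the projections $\pi:\mathcal{P}\to M$ and $\pi':\mathcal{P}\to M'$ playing the roles of the two surjective submersions. Since both the $\bullet$- and $\star$-actions on $\mathcal{P}$ are free and proper by hypothesis, these maps are automatically principal $G$-bundle projections, hence surjective submersions with connected fibres whenever $G$ is connected; otherwise one passes to identity components without loss of generality.

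First I would identify the singular foliations in play: on $M = \mathcal{P}/\bullet$, the residual $\star$-action of $G$ induces an orbit foliation $\mathcal{F}_M$, and on $M' = \mathcal{P}/\star$, the residual $\bullet$-action induces an orbit foliation $\mathcal{F}_{M'}$. These are precisely the singular foliations associated to the action groupoids $M//G$ and $M'//G$. The core computation will then be to show that the pulled-back foliations $\pi^{-1}\mathcal{F}_M$ and $(\pi')^{-1}\mathcal{F}_{M'}$ on $\mathcal{P}$ coincide. By construction the $\star$-action on $M$ is the descent of the $\star$-action on $\mathcal{P}$, so the leaf of $\pi^{-1}\mathcal{F}_M$ through $p$ equals $\pi^{-1}(G\star \pi(p))$, which, using that $\bullet$ and $\star$ commute and that the fibres of $\pi$ are precisely $\bullet$-orbits, is exactly the juxtaposed $(G\times G)$-orbit through $p$. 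A symmetric argument identifies the leaf of $(\pi')^{-1}\mathcal{F}_{M'}$ through $p$ with the same set, so the pulled-back foliations agree.

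The remaining ingredient, the Hausdorff condition, will be automatic since $\mathcal{P}$, $M$, and $M'$ are smooth manifolds obtained as total spaces and quotients of principal bundles for a compact Lie group action. Invoking Corollary~2.17 of \cite{Garmendia2019} then packages this data into a Hausdorff–Morita equivalence, which, at the level of action groupoids, is precisely the pullback-groupoid isomorphism $\pi^{\ast}(M//G)\cong(\pi')^{\ast}(M'//G)$ required by Definition~2.25 of loc.\ cit. The main obstacle I expect is conceptual rather than technical: one must see cleanly that the orbits of the residual $G$-actions on $M$ and $M'$ lift to the juxtaposed $(G\times G)$-orbits on $\mathcal{P}$, and this rests entirely on the commutativity of $\bullet$ and $\star$. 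Without that commutativity, neither the residual actions would be well defined nor would the two pullback foliations agree, so this is where the $\star$-diagram structure is genuinely used.
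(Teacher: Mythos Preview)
Your proposal is correct and follows essentially the same approach as the paper: both arguments reduce the claim to Corollary~2.17 of \cite{Garmendia2019}, observing that a $\star$-diagram provides exactly the data of two commuting free proper $G$-actions on $\mathcal{P}$ with $G_1=G_2=G$. Your version is in fact more explicit than the paper's, since you spell out why the pulled-back foliations $\pi^{-1}\mathcal{F}_M$ and $(\pi')^{-1}\mathcal{F}_{M'}$ both coincide with the juxtaposed $(G\times G)$-orbit foliation on $\mathcal{P}$, whereas the paper simply cites the corollary and notes that $\star$-diagrams satisfy its hypotheses.
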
 

\begin{proof} 
As stated in Corollary 2.17 in \cite{Garmendia2019}, if two connected Lie groups $G_1$ and $G_2$ act freely and properly on a manifold $P$ with commuting actions, the singular foliation on $P/G_1$ induced by the residual $G_2$-action is Hausdorff--Morita equivalent to the singular foliation on $P/G_2$ induced by the residual $G_1$-action. A $\star$-diagram is exactly this situation with $G_1=G_2=G$: taking $G_1$ to be the $\bullet$-action gives $P/G_1=M$, carrying the foliation induced by the residual $\star$-action, and taking $G_2$ to be the $\star$-action gives $P/G_2=M'$, carrying the foliation induced by the residual $\bullet$-action. Both actions are free by the definition of a $\star$-diagram, and proper because $G$ is compact. Connectedness of $G$ is used here, and only here: the singular foliation induced by a $G$-action is generated by the action vector fields \eqref{eq:action}, hence depends only on the identity component of $G$, and Corollary 2.17 of \cite{Garmendia2019} is stated under this hypothesis.
\end{proof} 

\vspace{1em}

\begin{definition}[Projective foliations]\label{def:projectivefoliation} 
A manifold $M$ equipped with a singular foliation $\mathcal{F}$ is termed \emph{projective} if there exist a vector bundle $E \to M$ and a bundle map $\rho: E \to TM$, injective on a dense open subset of $M$, such that
\[
\rho\big(\Gamma_c(E)\big)=\mathcal{F}\qquad\text{and}\qquad \Gamma_c(E) \cong \mathcal{F} \ \text{ as } C^{\infty}(M)\text{-modules},
\]
where $\Gamma_c(E)$ denotes the space of compactly supported smooth sections of $E$. The map $\rho$ is then an \emph{almost injective} anchor, and $E$ carries the structure of an almost injective Lie algebroid; see \cite[p.~484]{clairedebord} and \cite[Chapter 3]{mackenzie_2005} for further details. 
\end{definition}

As shown in Example \ref{ex:almostfree} below, if either the $G$-action on $M$ or the $G$-action on $M'$ is \emph{almost free} (i.e., every isotropy subgroup is discrete), then the induced singular foliations on $M$ and $M'$ are projective foliations.

\begin{example}\label{ex:almostfree} 
Let $X$ be a smooth manifold carrying an almost free action of a compact Lie group $G$; compactness guarantees that the action is proper, so the orbit foliation $\mathcal{F}_G$ of \eqref{eq:action} is defined. Let $\mathfrak{g}$ be the Lie algebra of $G$, define the trivial vector bundle $E = X \times \mathfrak{g} \to X$, and let $\rho: E \to TX$ be the anchor
\[
\rho_x(v) = v^*_x,\qquad v\in\mathfrak{g},
\]
the evaluation at $x$ of the action vector field of $v$. For every $x\in X$ one has $\ker\rho_x = \mathfrak{g}_x := \mathrm{Lie}(G_x)$, since $v^*_x=0$ precisely when the one-parameter subgroup $\exp(tv)$ fixes $x$. Because the action is almost free, each isotropy group $G_x$ is discrete, so $\mathfrak{g}_x = 0$ and $\rho_x$ is injective for every $x$; hence $\rho$ is injective as a bundle map, in particular almost injective in the sense of Definition \ref{def:projectivefoliation}. Moreover $\rho\big(\Gamma_c(E)\big)=\mathcal{F}_G$, a basis of $\mathfrak{g}$ providing global generators, and $\Gamma_c(E)\cong\mathcal{F}_G$ as $C^\infty(X)$-modules. Thus the orbit foliation is projective, induced by the almost injective Lie algebroid $(E,\rho)$ \cite[p.~496]{clairedebord}.

Item \eqref{item:isotropy} of Theorem \ref{thm:star} relates the two isotropy groups along the diagram: for $p\in P$ there is $g\in G$ with \[ (G\times G)_p=\{(h,\,ghg^{-1})\;:\;h\in G_{\pi(p)}\}, \] and projecting onto the two factors gives $G_{\pi'(p)}=g\,G_{\pi(p)}\,g^{-1}$. In particular, corresponding isotropy groups are isomorphic, hence one is discrete precisely when the other is. Since every point of $M$ is of the form $\pi(p)$ and every point of $M'$ of the form $\pi'(p)$, the $G$-action on $M$ is almost free if and only if the $G$-action on $M'$ is almost free. Thus, the foliation on $M$ is projective if and only if the foliation on $M'$ is projective. This is in accordance with Proposition 2.7 in \cite{Garmendia2019}, which states that an existing Hausdorff--Morita equivalence between two manifolds with singular foliations ensures that if one of such foliations is projective, the other one is also projective.
\end{example}

\ 

Recall that every $G$-manifold $X$ gives rise to an action Lie groupoid, conventionally denoted $G \ltimes X$. The object space is $X$, and the arrow space is $G \times X$, equipped with source map $s(g,x) = x$ and target map $t(g,x) = g \cdot x$. Here, we prove that there is a Morita equivalence (Definition \ref{def:morita}) between the Lie groupoids $G \ltimes M$ and $G \ltimes M'$\footnote{This fact was pointed out by O. Brahic and C. Ortiz in 2015 to L. Sperança, who then communicated it to the authors.}.

\begin{definition}[Morita equivalence of action groupoids, \cite{Garmendia2019}] \label{def:morita}
Two action groupoids $G \ltimes M$ and $H \ltimes N$ are said to be \emph{Morita equivalent} if there exist a Hausdorff manifold $P$ and two surjective submersions $\pi_M: P \rightarrow M$ and $\pi_N: P \rightarrow N$ such that the pullback groupoids $\pi_M^{*}(G \ltimes M)$ and $\pi_N^{*}(H \ltimes N)$, both of which have $P$ as object space, are isomorphic as Lie groupoids by an isomorphism which is the identity on objects. 
\end{definition}

\begin{theorem}\label{thm:morita}
Let $M'\xleftarrow{\ \pi'\ } Q\xrightarrow{\ \pi\ } M$ be a $\star$-diagram with structure group $G$, and endow the two base manifolds with the residual $G$-actions
\[
r\cdot \pi(q):=\pi(r\star q),\qquad s\odot \pi'(q):=\pi'(q\,s^{-1}).
\]
Then the action Lie groupoids $G\ltimes M$ and $G\ltimes M'$ are Morita equivalent. Moreover, for every $q\in Q$ the assignment $g\mapsto k$, where $k$ is determined by $g\star q=q\,k$, is an isomorphism of isotropy groups $G_{\pi(q)}\xrightarrow{\ \cong\ }G_{\pi'(q)}$, and the orbit spaces are canonically identified: $M/G\cong Q/(G\times G)\cong M'/G$.
\end{theorem}
\begin{proof}
By definition of a $\star$-diagram, the $\bullet$- and $\star$-actions on $Q$ are free, commute, and exhibit $\pi$ and $\pi'$ as principal $G$-bundles. We write the $\bullet$-action on the right, $(q,s)\mapsto qs$, and the $\star$-action on the left, $(r,q)\mapsto r\star q$, so that commutativity reads $r\star(qs)=(r\star q)s$.

Since $\pi\big(r\star(qs)\big)=\pi\big((r\star q)s\big)=\pi(r\star q)$, the assignment $r\cdot\pi(q):=\pi(r\star q)$ is well defined; it is a left action because $\star$ is, and it is smooth because $\pi$ is a surjective submersion. Symmetrically, $\pi'\big((r\star q)s^{-1}\big)=\pi'\big(r\star(qs^{-1})\big)=\pi'(qs^{-1})$ shows that $s\odot\pi'(q):=\pi'(qs^{-1})$ is well defined, and it is a \emph{left} action, since
\[
s_1\odot\big(s_2\odot\pi'(q)\big)=\pi'\big(q\,s_2^{-1}s_1^{-1}\big)=\pi'\big(q\,(s_1s_2)^{-1}\big)=(s_1s_2)\odot\pi'(q).
\]
Thus $G\ltimes M$ and $G\ltimes M'$ are the action groupoids of left $G$-actions, with arrows $(g,x):x\to g\cdot x$.

Because the two actions commute,
\[
(g,k)\ast q:=(g\star q)\,k^{-1},\qquad (g,k)\in G\times G,\ q\in Q,
\]
defines a smooth left $G\times G$-action on $Q$:
\[
(g_1,k_1)\ast\big((g_2,k_2)\ast q\big)=\Big(g_1\star\big((g_2\star q)k_2^{-1}\big)\Big)k_1^{-1}
=\big((g_1g_2)\star q\big)\,k_2^{-1}k_1^{-1}=(g_1g_2,\;k_1k_2)\ast q .
\]

 Since $\pi$ is a surjective submersion, the pullback $\mathcal{G}_M:=\pi^{*}(G\ltimes M)\rightrightarrows Q$ is a Lie groupoid, with arrow space
\[
\mathcal{G}_M=\big\{(q_1,g,q_2)\in Q\times G\times Q:\ \pi(q_1)=g\cdot\pi(q_2)\big\}
\]
and composition $(q_0,h,q_1)\circ(q_1,g,q_2)=(q_0,hg,q_2)$. We claim that
\[
\Theta:(G\times G)\ltimes Q\longrightarrow\mathcal{G}_M,\qquad
\Theta\big((g,k),q\big)=\big((g\star q)k^{-1},\;g,\;q\big),
\]
is an isomorphism of Lie groupoids over $\mathrm{id}_Q$. It lands in $\mathcal{G}_M$, since $\pi\big((g\star q)k^{-1}\big)=\pi(g\star q)=g\cdot\pi(q)$; it is smooth; it preserves sources and targets by construction, and composition by the computation above. It is bijective: given $(q_1,g,q_2)\in\mathcal{G}_M$, the points $q_1$ and $g\star q_2$ lie in the same $\pi$-fiber, on which the $\bullet$-action is simply transitive, so there is a unique $k\in G$ with $q_1=(g\star q_2)k^{-1}$. The inverse map $(q_1,g,q_2)\mapsto\big((g,k),q_2\big)$ is smooth. Indeed, recall that a principal $G$-bundle $\pi:Q\to M$ comes equipped with its \emph{division map} \[ \delta:Q\times_M Q\longrightarrow G,\qquad q'=q\cdot\delta(q,q'), \] defined on the fiber product $Q\times_M Q=\{(q,q'): \pi(q)=\pi(q')\}$; the element $\delta(q,q')$ exists and is unique because the principal action is simply transitive on the fibers, and it depends smoothly on $(q,q')$ because the map $Q\times G\to Q\times_M Q$, $(q,a)\mapsto (q,qa)$, is a diffeomorphism. In the case at hand, $\pi(g\star q_2)=g\cdot\pi(q_2)=\pi(q_1)$, so the pair $(g\star q_2,\,q_1)$ lies in $Q\times_M Q$ and \[ k^{-1}=\delta\big(g\star q_2,\;q_1\big), \] which is smooth in $(q_1,g,q_2)$.

Exchanging the roles of the two actions, the map
\[
\Theta':(G\times G)\ltimes Q\longrightarrow\mathcal{G}_{M'}:=(\pi')^{*}(G\ltimes M'),\qquad
\Theta'\big((g,k),q\big)=\big((g\star q)k^{-1},\;k,\;q\big),
\]
is likewise an isomorphism of Lie groupoids over $\mathrm{id}_Q$. It lands in $\mathcal{G}_{M'}$ because
\[
\pi'\big((g\star q)k^{-1}\big)=\pi'\big(g\star(q\,k^{-1})\big)=\pi'\big(q\,k^{-1}\big)=k\odot\pi'(q),
\]
using commutativity and the $\star$-invariance of $\pi'$; and it preserves composition, since $(h,k_h)\,(g,k_g)=(hg,\,k_hk_g)$ is sent to $k_hk_g$. For bijectivity, let $(q_1,k,q_2)\in\mathcal{G}_{M'}$, so that $\pi'(q_1)=k\odot\pi'(q_2)=\pi'(q_2k^{-1})$; thus $q_1$ and $q_2k^{-1}$ lie in the same fiber of $\pi'$, on which the $\star$-action is simply transitive, and there is a unique $g\in G$ with $q_1=g\star(q_2k^{-1})=(g\star q_2)k^{-1}$. Writing $\delta':Q\times_{M'}Q\to G$ for the division map of the principal bundle $\pi'$, normalized by $q''=\delta'(q'',q''')\star q'''$, we obtain $g=\delta'\big(q_1,\;q_2k^{-1}\big)$, which depends smoothly on $(q_1,k,q_2)$; hence the inverse of $\Theta'$ is smooth.

 Hence $\mathcal{G}_M\cong(G\times G)\ltimes Q\cong\mathcal{G}_{M'}$ as Lie groupoids, by an isomorphism which is the identity on objects. Since the pullback of a Lie groupoid along a surjective submersion is Morita equivalent to it, and Morita equivalence is an equivalence relation,
\[
G\ltimes M\ \sim\ \mathcal{G}_M\ \cong\ \mathcal{G}_{M'}\ \sim\ G\ltimes M'.
\]
For the isotropy statement, restrict $\Theta'\circ\Theta^{-1}$ to the arrows from $q$ to $q$. In $\mathcal{G}_M$ these are the triples $(q,g,q)$ with $g\in G_{\pi(q)}$, and $\Theta^{-1}(q,g,q)=\big((g,k),q\big)$ with $q=(g\star q)k^{-1}$, that is, $g\star q=q\,k$; in $\mathcal{G}_{M'}$ they are the triples $(q,k,q)$ with $k\in G_{\pi'(q)}$. The correspondence $g\mapsto k$ is therefore a bijection $G_{\pi(q)}\to G_{\pi'(q)}$, and it is a homomorphism:
\[
(hg)\star q=h\star(g\star q)=h\star(q\,k_g)=(h\star q)\,k_g=q\,k_h k_g .
\]
Finally, both $M/G$ and $M'/G$ are the quotient of $Q$ by the action $\ast$, whence the identification of orbit spaces.
\end{proof}

%Many interesting consequences arise. For instance, related to equivariant cohomology.

One consequence concerns equivariant cohomology.

\begin{theorem}\label{thm:equivanlentcohomologies}
    Let $M\leftarrow P\rightarrow M'$ be a $\star$-diagram with compact structure group $G$, and assume that the induced $G$-action on $M$ is almost free. Then the induced $G$-action on $M'$ is almost free, and there is an isomorphism of equivariant cohomologies
    \[\mathrm{H}^*_G(M;\mathbb{R})\cong \mathrm{H}^*_G(M';\mathbb{R}).\]
\end{theorem}
\begin{proof}
That the $G$-action on $M$ is almost free if and only if the $G$-action on $M'$ is almost free was established in Example \ref{ex:almostfree}. Let $X$ denote either $M$ or $M'$. The isotropy groups of the $G$-action on $X$ are then finite: they are discrete by almost freeness and closed in the compact group $G$, hence compact, and a compact discrete group is finite.

For a compact Lie group acting with finite isotropy groups, the projection of the Borel construction onto the orbit space induces an isomorphism in real cohomology,
\[
\mathrm{H}_G^{*}(X;\mathbb{R})\;=\;\mathrm{H}^{*}\big(EG\times_G X;\mathbb{R}\big)\;\cong\;\mathrm{H}^{*}(X/G;\mathbb{R});
\]
see \cite{Brion1998}. Applying this to $X=M$ and to $X=M'$, and using the canonical identification of orbit spaces $M/G\cong P/(G\times G)\cong M'/G$ furnished by Theorem \ref{thm:morita}, we obtain
\[
\mathrm{H}_G^{*}(M;\mathbb{R})\;\cong\;\mathrm{H}^{*}(M/G;\mathbb{R})\;\cong\;\mathrm{H}^{*}(M'/G;\mathbb{R})\;\cong\;\mathrm{H}_G^{*}(M';\mathbb{R}). \qedhere
\]
\end{proof}

\vspace{1em} 

\section{Spherical T-duality}
\label{sec:Tdualityandstardiagrams}
In a series of works \cite{Bouwknegt20041, Bouwknegt20042, Bouwknegt20043, Bouwknegt2005}, Bouwknegt, Evslin, Hannabuss, and Mathai proved that for each pair $(P, H)$ consisting of a manifold $P$ with a free circle action and an integral 3-cocycle $H$ on $P$, there exists a unique (up to isomorphism) associated T-dual pair $(\widehat{P}, \widehat{H})$. Here, $\widehat{P}$ denotes a manifold with a free circle action and a cocycle $\widehat{H}$, such that the orbit space for both circle actions is identical. Although $P$ and $\widehat{P}$ are generally not homeomorphic, it has been proven that T-duality induces various degree-shifting isomorphisms between different structures, such as twisted cohomology and twisted K-theory in $P$ and $\widehat{P}$. Subsequent work \cite{gualtieri, cavalcanti} established that T-duality also induces isomorphisms in Dirac structures, Courant algebroids, generalized complex structures, and generalized Kähler structures. Later, a higher-dimensional version of T-duality was introduced in \cite{Bouwknegt2015, Bouwknegt20152}, focusing on sphere bundles. More recently, this concept was further generalized in \cite{Lind2020,cavalcanti2025topologicalsphericaltduality}.

A primary goal of the present work is to realize different smooth structures as spherical T-dual manifolds and establish a correspondence between these T-dual pairs and $\star$-diagrams. The former promotes geometric realizations of these essentially topological (T-duality) constructions. 

\vspace{1em} 

\subsection{Milnor bundles: a review}
\label{sec:milnorbundles}

\begin{definition}
    A sphere bundle $\mathrm{S}^{n{-}1}\hookrightarrow M\to B$ is called \textit{linear} if it admits $\mathrm{O}(n)$ (acting in the standard way on $\mathrm{S}^{n{-}1}$) as its structure group. Equivalently, this holds if there exists a set of transition functions $\{\phi_{ij}:U_i\cap U_j\to \mathrm{O}(n)\}$.
\end{definition}

The usual boundary map in the long homotopy sequence of the fibration $EG\to BG$, for $G=\mathrm{SO}(n)$, provides a bijection between the set of linear $\mathrm{S}^{n-1}$-bundles over $\mathrm{S}^l$ and $\pi_{l-1}(\mathrm{SO}(n))$. For topological computations aiming to determine $\pi_{l-1}(\mathrm{SO}(4))$ for $l\geq 3$, it suffices to compute $\pi_{l-1}(\mathrm{S}^{3}\times\mathrm{S}^3)$, since coverings induce isomorphisms on homotopy groups in degrees at least two and we have a canonical two-fold covering homomorphism of Lie groups
\begin{equation}
    \Psi : \mathrm{S}^3\times \mathrm{S}^3\rightarrow \mathrm{SO}(4)
\end{equation}
given by
\begin{equation}
    \Psi(x,y)v = xv\bar{y}, \quad \text{for } x,y \in \mathrm{S}^3,~v\in \mathbb H.
\end{equation}
Specializing to $l=n=4$, we obtain $ \pi_3(\mathrm{SO}(4)) \cong \pi_3(\mathrm{S}^3\times\mathrm{S}^3)\cong \mathbb Z\oplus \mathbb Z$. The linear $\mathrm{S}^3$-bundles over $\mathrm{S}^4$ are known as \emph{Milnor bundles}.

As in \cite{mi}, for pairs of integers $(m,n) \in \mathbb Z\oplus \mathbb Z \cong \pi_3(\mathrm{SO}(4))$, we can build the maps $t_{m,n}:\mathrm{S}^3\to \mathrm{SO}(4)$,
\begin{equation}
    t_{m,n}(x)v=x^mvx^n, \quad v\in\mathbb H,
\end{equation}
whose classes realize the isomorphism $\mathbb{Z}\oplus\mathbb{Z}\cong\pi_3(\mathrm{SO}(4))$, $(m,n)\mapsto[t_{m,n}]$. Let $f_{m,n}:=f_{t_{m,n}}$ be the associated clutching diffeomorphism
\begin{equation}
     f_{m,n}(x,g) := (x,t_{m,n}(x)g) = (x, x^m g x^n), \quad \text{for } (x,g) \in \mathrm{S}^3\times \mathrm{S}^3.
\end{equation}
The manifold $M_{m,n}=\left(\mathrm{D}^4_+\times \mathrm{S}^3\right)\cup_{f_{m,n}}\left(\mathrm{D}^4_-\times \mathrm{S}^3\right)$, obtained by gluing two copies of $\mathrm{D}^4\times\mathrm{S}^3$ along the common boundary $\mathrm{S}^3\times \mathrm{S}^3$ over the decomposition $\mathrm{S}^4=\mathrm{D}^4_+\cup_{\mathrm{S}^3}\mathrm{D}^4_-$, gives rise to a Milnor bundle. (The presentation $\mathrm{D}^4\times\mathrm{S}^3\cup\mathrm{S}^3\times\mathrm{D}^4$ employed in Section \ref{sec:logtrans_final} for the homotopy spheres $m+n=\pm1$ is obtained from this one by relabeling the second piece through the factor swap $\mathrm{D}^4_-\times\mathrm{S}^3\to\mathrm{S}^3\times\mathrm{D}^4$, the gluing map being composed with the swap accordingly, and conversely; the two presentations therefore describe the same manifold, and we pass between them freely.) In \cite{mi}, Milnor observed that $M_{m,n}$ is homeomorphic to $\mathrm{S}^7$ if and only if $m+n=\pm 1$, and that it fails to be diffeomorphic to $\mathrm{S}^7$ for suitable choices of $(m,n)$, such as $(2,-1)$. This is established by examining Milnor's \emph{$\lambda$-invariant}, an element of $\mathbb{Z}_7$ which vanishes for the standard sphere and is given by
\begin{equation}
    \lambda(M_{m,1-m}) = (-1+2m)^2-1 \pmod{7}.
\end{equation}
For later use we record the characteristic classes: denoting by $u\in\mathrm{H}^4(\mathrm{S}^4;\mathbb{Z})$ the orientation generator, the Milnor bundle $M_{m,n}$ has Euler class $\mathrm{e}(M_{m,n})=(m+n)u$ and, with respect to the orientation conventions of \cite{mi}, first Pontryagin class $p_1 = \pm 2(m-n)u$ of the associated rank-four bundle \cite{mi,steenrod}; we fix orientations once and for all so that $p_1=2(m-n)u$. With this normalization, $p_1=2\,\mathrm{e}$ precisely when $n=0$, and $p_1=-2\,\mathrm{e}$ precisely when $m=0$; these are exactly the two families of Milnor bundles admitting a principal structure (Lemma \ref{lem:principal_s3_condition} below).

\vspace{1em} 

\subsection{Oriented \texorpdfstring{$\mathrm{S}^3$}{S3}-bundles over 4-manifolds}

An \emph{oriented $\mathrm{S}^3$-bundle} $E$ over a manifold $B$ is a fiber bundle over $B$ with fiber $\mathrm{S}^3$ and structure group $\mathrm{Diff}_+(\mathrm{S}^3)$, the group of orientation-preserving diffeomorphisms of $\mathrm{S}^3$; such bundles need not be principal. The inclusion $\mathrm{SO}(4) = \mathrm{Iso}_+(\mathrm{S}^3) \hookrightarrow \mathrm{Diff}_+(\mathrm{S}^3)$ of the orientation-preserving isometries of $\mathrm{S}^3$ into the orientation-preserving diffeomorphisms of $\mathrm{S}^3$ is a homotopy equivalence \cite[Theorem A]{hatcherproof}. Consequently, the quotient space $\mathrm{Diff}_+(\mathrm{S}^3)/\mathrm{SO}(4)$ is contractible. This topological triviality implies that the associated bundle with this quotient space as fiber admits a global section, allowing us to assume, without loss of generality, that the structure group of a general oriented $\mathrm{S}^3$-bundle $E$ reduces to $\mathrm{SO}(4)$; that is, every oriented $\mathrm{S}^3$-bundle is isomorphic to an oriented \emph{linear} $\mathrm{S}^3$-bundle, and we use the latter terminology throughout.

Any principal $\mathrm{SO}(4)$-bundle $P$ over $\mathrm{S}^4$ is classified by $\pi_3(\mathrm{SO}(4)) \cong \mathbb{Z} \oplus \mathbb{Z}$. Thus any pair of integers $(m, n)$ defines a principal $\mathrm{SO}(4)$-bundle $P_{m,n}$ whose associated oriented linear $\mathrm{S}^3$-bundle is the Milnor bundle $M_{m,n}$ over $\mathrm{S}^4$. Throughout, the characteristic classes of a principal $\mathrm{SO}(4)$-bundle are understood to be those of its associated oriented rank-four vector bundle $\xi_P = P\times_{\mathrm{SO}(4)}\mathbb{R}^4$; with this understanding and the identification $\mathrm{H}^4(\mathrm{S}^4;\mathbb{Z})\cong\mathbb{Z}$ given by $u$, the classes recorded in Section \ref{sec:milnorbundles} read $p_1(P_{m,n}) = 2(m-n)$ and $\mathrm{e}(P_{m,n}) = m+n$. In more generality, the following holds:

\begin{theorem}[{following Dold and Whitney; see \cite[\S 2]{Bouwknegt20152}}]\label{thm:so(4)principals}
Let $B$ be a simply-connected compact oriented 4-dimensional manifold, and let all characteristic classes of a principal $\mathrm{SO}(4)$-bundle $P\to B$ refer to those of the associated oriented rank-four vector bundle $\xi_P = P \times_{\mathrm{SO}(4)} \mathbb{R}^4$. Fix an integral lift $b$ of $w_2(P)$, which exists since $B$ is simply connected. Then $P$ is completely classified by its second Stiefel--Whitney class $w_2(P) \in \mathrm{H}^2(B; \mathbb{Z}_2)$ together with a pair of integers $m, n \in \mathbb{Z}$, determined by $w_2(P)$, the lift $b$, and the equations:
\begin{equation}
    \langle p_1(P),[B]\rangle = 2(m-n) + \beta,
\end{equation}
\begin{equation}
    \langle\mathrm{e}(P),B\rangle = m+n,
\end{equation}
where $\beta := \langle b \cup b, [B]\rangle \in \mathbb{Z}$ is the self-intersection of the chosen lift $b$. This $\beta$ is compatible with the mod-$4$ Pontryagin square in the sense that $\beta \equiv \langle \mathcal{P}(w_2(P)), [B]\rangle \pmod 4$, where $\mathcal{P}: \mathrm{H}^2(B; \mathbb{Z}_2) \to \mathrm{H}^4(B; \mathbb{Z}_4)$ is the Pontryagin square; in particular, $\beta\bmod 4$, and hence the residue class of $p_1(P)+2\mathrm{e}(P)$, is independent of the chosen lift. 
\end{theorem}

\begin{remark}
In \cite{CROWLEY2003363}, a classification of total spaces of bundles over the four-sphere with fiber a three-sphere is provided, considering both orientation-preserving and reversing homotopy equivalence, homeomorphism, and diffeomorphism. Care must be taken, however, as the definition of $M_{m,n}$ in that context differs slightly from ours. We refer the reader to Remark 1.1 in the cited reference.
\end{remark}

\vspace{1em} 

\subsection{Principal Spherical T-duality}

Here, we recall the definition of spherical T-duality in the principal bundle case, first introduced in \cite{Bouwknegt2015}. 

\begin{theorem}[Theorem 1 in \cite{Bouwknegt2015}]\label{thm:gysin}
    Let $\pi: P\to B$ be an $\mathrm{S}^3$-principal bundle. We have the following exact sequence, known as the Gysin sequence for cohomology with integer coefficients:
    \begin{equation} \label{eq:eqBaa}
       \xymatrix{
        \cdots \ar[r] & \mathrm{H}^{p-4}(B) \ar[r]^{\mathrm{c}_2 \cup} & \mathrm{H}^p(B) \ar[r]^{\pi^*} & \mathrm{H}^p(P) \ar[r]^{\pi_*} & \mathrm{H}^{p-3}(B) \ar[r]^{\mathrm{c}_2 \cup} & \cdots}
    \end{equation}
  where $\pi^*$ denotes the pull-back map, $\pi_*$ denotes the Gysin map (integration along the fiber), and $\mathrm{c}_2\cup$ denotes the cup product with the second Chern class $\mathrm{c}_2(P)\in \mathrm{H}^4(B; \mathbb{Z})$. Here we have identified the Euler class $\mathrm{e}(\xi_P) \in \mathrm{H}^4(B;\mathbb{Z})$ of the associated oriented real rank-four bundle $\xi_P = P\times_{\mathrm{S}^3} \mathbb{R}^4$, where $\mathrm{S}^3$ acts on $\mathbb{R}^4\cong\mathbb{H}$ by left multiplication, with the second Chern class $\mathrm{c}_2(V)$ of the complex rank-two bundle $V = P\times_{\mathrm{SU}(2)} \mathbb{C}^2$ obtained under $\mathrm{S}^3 \cong \mathrm{SU}(2)$; with compatible orientations, $\mathrm{e}(\xi_P) = \mathrm{c}_2(V)$. Equivalently, $V$ is the quaternionic line bundle $L = P\times_{\mathrm{S}^3}\mathbb{H}$ regarded as a complex bundle via the right $\mathbb{C}$-module structure on $\mathbb{H}$ (left multiplication by $\mathrm{S}^3$ being complex linear for it).
\end{theorem}

\begin{definition}[Principal Spherical T-duality]\label{def:principaltdual}
Given a pair $(\pi : P\rightarrow B,H)$ consisting of an $\mathrm{S}^3$-principal bundle $P$ over $B$ and a class $H\in \mathrm{H}^7(P; \mathbb{Z})$, called the \emph{$H$-flux}, a \emph{spherical T-dual} of $(P,H)$ is a pair $(\widehat\pi : \widehat{P}\rightarrow B,\widehat H)$ consisting of an $\mathrm{S}^3$-principal bundle $\widehat{P}$ over $B$ equipped with a class $\widehat H \in \mathrm{H}^7(\widehat{P}; \mathbb{Z})$ satisfying the relations:
\begin{align}
    \mathrm{c}_2(\widehat{P}) &= \pi_* H, \\
    \widehat\pi_*\widehat H &= \mathrm{c}_2(P),
\end{align}
and such that their pullbacks to the fiber product $P\times_{B}\widehat{P}$ match: $\widehat p^*H=p^*\widehat H$, where $\widehat p$ and $p$ denote the canonical projections onto $P$ and $\widehat{P}$, respectively. This fits into the commutative diagram:
\begin{equation} \label{correspondenceb}
\xymatrix 
@=4pc @ur 
{ P \ar[d]_{\pi} & 
P\times_{B}  \widehat{P} \ar[d]^{p} \ar[l]_-{\widehat p} \\{B} & \widehat{P}\ar[l]^{\widehat \pi}}
\end{equation}
\end{definition}
% \begin{remark}
% As pointed out in \cite{Bouwknegt2015}, if $\dim(B)\leq 6$, then diagram \eqref{correspondenceb} specifies the class $\widehat H$ uniquely. 
% \end{remark}

The connection between Theorem~\ref{thm:gysin} and Definition~\ref{def:principaltdual} is direct. Given a pair $(P, H)$ over $B$, integration along the fiber yields $\pi_* H \in \mathrm{H}^4(B; \mathbb{Z})$. The defining property of T-duality asks whether this class $\pi_* H$ can be realized as the second Chern class of a dual principal bundle $\widehat{P}$. As a first goal, we determine how spherical T-duality applies to those Milnor bundles which are $\mathrm{S}^3$-principal. We start with the following lemma. 

\begin{lemma}\label{lem:principal_s3_condition}
Let $P_{m,n}$ be the principal $\mathrm{SO}(4)$-bundle over $\mathrm{S}^4$ classified by the pair $(m,n) \in \mathbb{Z} \oplus \mathbb{Z}$, and let $M_{m,n} = P_{m,n} \times_{\mathrm{SO}(4)} \mathrm{S}^3$ be its associated $\mathrm{S}^3$-fiber bundle. The bundle $M_{m,n}$ admits the structure of an $\mathrm{S}^3$-principal bundle if and only if $m=0$ or $n=0$. This constraint is equivalent to the characteristic classes of the principal bundle satisfying $p_1(P_{m,n}) = \pm 2 \mathrm{e}(P_{m,n})$.
\end{lemma}
\begin{proof}
A principal $\mathrm{S}^3$-structure on $M_{m,n}$ identifies the fibers with the group in such a way that the clutching map acts by translations; hence $[t_{m,n}]$ lies in the image of $\pi_3(\mathrm{S}^3)\to\pi_3(\mathrm{SO}(4))$ induced by $g\mapsto L_g$ or by $g\mapsto R_g$, the left and right translations of $\mathbb{H}$. Since $L_g=t_{1,0}(g)$ and $R_g=t_{0,1}(g)$, these images are $\mathbb{Z}\,(1,0)$ and $\mathbb{Z}\,(0,1)$ respectively, so $n=0$ or $m=0$. Conversely, on $M_{n,0}$ the right translations $v\mapsto v\,g$ commute with the clutching $v\mapsto x^{n}v$ and act freely and transitively on the fibers, exhibiting $M_{n,0}$ as principal; symmetrically, the left translations exhibit $M_{0,n}$ as principal. The characteristic-class reformulation follows from the normalization of Section \ref{sec:milnorbundles}: with $p_1=2(m-n)$ and $\mathrm{e}=m+n$, one has $p_1=2\,\mathrm{e}$ if and only if $n=0$, and $p_1=-2\,\mathrm{e}$ if and only if $m=0$.
\end{proof}

Assume there exists an $\mathrm{S}^3$-principal bundle $\widehat{P}$ such that $\mathrm{c}_2(\widehat{P}) = \pi_* H$ (which may not be unique if $\dim(B) > 4$; see \cite[Section 1, p.910]{Bouwknegt2015}). The Gysin sequence for $\widehat \pi : \widehat{P}\to B$ ensures the existence of a class $\widehat H \in \mathrm{H}^7(\widehat{P};\mathbb{Z})$ such that $\widehat \pi_* \widehat H = \mathrm{c}_2(P)$, and that $\widehat H$ is determined by this condition up to an element $\widehat \pi^* h$, where $h\in \mathrm{H}^7(B;\mathbb{Z})$. To resolve the non-uniqueness of $\widehat H$, the constraint $p^*\widehat H - \widehat p^*H = 0 \in \mathrm{H}^7(P\times_{B}\widehat{P};\mathbb{Z})$ is imposed on the correspondence space $P\times_{B}\widehat{P}$, yielding: 
\begin{equation} \label{eq:eqBab}
\xymatrix{
& (P\times_B\widehat{P},\;p^*(\widehat H) - {\widehat p}^*(H) = 0) \ar[dl]_{\widehat  p}  \ar[dr]^{p} \\
(P,H) \ar[dr]_{\pi} && (\widehat{P},\widehat H) \ar[dl]^{\widehat \pi}  \\
& B & 
}
\end{equation}

\begin{theorem}[Theorem 2 in \cite{Bouwknegt2015}] \label{thm:thBAb}
Let $P$ be an $\mathrm{S}^3$-principal bundle with second Chern class $\mathrm{c}_2 \equiv \mathrm{c}_2(P) \in \mathrm{H}^4(B)$,
and let $H\in \mathrm{H}^7(P)$ be an $H$-flux on $P$. Suppose there exists an $\mathrm{S}^3$-principal bundle $\widehat{P}$ such that $\widehat{\mathrm{c}_2} \equiv \mathrm{c}_2(\widehat{P}) = \pi_*H$. Then
\begin{itemize}
\item[(i)] (Existence) there exists a class $\widehat H\in \mathrm{H}^7(\widehat{P})$ such that
\begin{equation} \label{eq:eqBa}
\widehat \pi_* \widehat H = \mathrm{c}_2(P) \,,\quad \text{and}\quad p^*\widehat H - \widehat p^*H = 0 \,,
\end{equation}
\item[(ii)] (Uniqueness) $\widehat H$ is uniquely determined by \eqref{eq:eqBa} up to the addition of a term
$\widehat \pi^*( a \cup \mathrm{c}_2)$, with $a\in \mathrm{H}^3(B)$.
\end{itemize}
\end{theorem}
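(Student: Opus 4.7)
The plan is a diagram chase in the four Gysin sequences for $\pi$, $\widehat\pi$, $p$, $\widehat p$, linked by the base-change identities $p_*\widehat p^{\,*} = \widehat\pi^{\,*}\pi_*$ and $\widehat p_*\,p^* = \pi^*\widehat\pi_*$ coming from the cartesian square in \eqref{correspondenceb}, and by the fundamental vanishing of an Euler class pulled back to its own total space, namely $\pi^*\mathrm{c}_2(\mathcal P)=0$ and $\widehat\pi^{\,*}\widehat{\mathrm{c}_2}=0$.

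For existence, I would first produce a preliminary class $\widehat H_0\in \mathrm{H}^7(\widehat{\mathcal P})$ with $\widehat\pi_*\widehat H_0 = \mathrm{c}_2(\mathcal P)$. By the Gysin sequence for $\widehat\pi$ (Theorem~\ref{thm:gysin}) the image of $\widehat\pi_*$ is $\ker(\widehat{\mathrm{c}_2}\cup)$, so the obstruction is $\widehat{\mathrm{c}_2}\cup\mathrm{c}_2(\mathcal P) = \pi_*H\cup\mathrm{c}_2(\mathcal P)$, which vanishes by Gysin exactness for $\pi$ at $\mathrm{H}^4(M)$. Next I would form $E := \widehat p^{\,*}H - p^*\widehat H_0\in \mathrm{H}^7(\mathcal P\times_M\widehat{\mathcal P})$ and compute $p_*E = \widehat\pi^{\,*}\pi_*H - 0 = \widehat\pi^{\,*}\widehat{\mathrm{c}_2}=0$ together with $p_*p^*\widehat H_0=0$, so Gysin exactness for $p$ yields $E=p^*g$ for some $g\in \mathrm{H}^7(\widehat{\mathcal P})$. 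Setting $\widehat H := \widehat H_0+g$ immediately gives $p^*\widehat H = \widehat p^{\,*}H$. The delicate point is to retain the pushforward identity: applying $\widehat p_*$ to $E$ yields $-\pi^*\mathrm{c}_2(\mathcal P)=0$, while $\widehat p_*p^*g = \pi^*\widehat\pi_*g$, forcing $\widehat\pi_*g\in \ker\pi^*|_{\mathrm{H}^4(M)} = \mathrm{c}_2(\mathcal P)\cdot\mathrm{H}^0(M)$, i.e.\ $\widehat\pi_*g = n\cdot\mathrm{c}_2(\mathcal P)$. I would absorb this integer $n$ by modifying $g$ within its coset modulo $\ker p^* = \widehat\pi^{\,*}\mathrm{c}_2(\mathcal P)\cup\mathrm{H}^3(\widehat{\mathcal P})$ via the projection formula $\widehat\pi_*(\widehat\pi^{\,*}\mathrm{c}_2(\mathcal P)\cup\alpha) = \mathrm{c}_2(\mathcal P)\cup\widehat\pi_*\alpha$; a spectral-sequence argument on the fibration $\mathcal P\times_M\widehat{\mathcal P}\to M$, or (in the $\dim M\leq 6$ regime singled out after Definition~\ref{def:principaltdual}) a direct dimension check, cancels $n$.

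For uniqueness, let $D := \widehat H_1 - \widehat H_2$ be the difference of two solutions; then $\widehat\pi_*D=0$ and $p^*D=0$. Gysin for $\widehat\pi$ writes $D=\widehat\pi^{\,*}b$ for some $b\in\mathrm{H}^7(M)$, and $p^*\widehat\pi^{\,*}b = \widehat p^{\,*}\pi^*b = 0$ forces $\pi^*b\in\ker\widehat p^{\,*} = \pi^*\widehat{\mathrm{c}_2}\cup\mathrm{H}^3(\mathcal P)$, say $\pi^*b = \pi^*\widehat{\mathrm{c}_2}\cup\alpha$. Applying $\pi_*$ and the projection formula gives $\widehat{\mathrm{c}_2}\cup\pi_*\alpha = 0$; controlling the resulting $\mathrm{H}^0$-indeterminacy lets us take $\alpha = \pi^*a$ for some $a\in\mathrm{H}^3(M)$, so that $\pi^*(b-\widehat{\mathrm{c}_2}\cup a) = 0$. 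One more Gysin step for $\pi$ identifies $b-\widehat{\mathrm{c}_2}\cup a$ with $\mathrm{c}_2(\mathcal P)\cup a'$ for some $a'\in\mathrm{H}^3(M)$; since $\widehat{\mathrm{c}_2}\cup\mathrm{H}^3(M)\subseteq\ker\widehat\pi^{\,*}$, we conclude $D = \widehat\pi^{\,*}(a'\cup\mathrm{c}_2(\mathcal P))$, the asserted form.

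The main obstacle is the reconciliation step in existence: each of the two conditions in \eqref{eq:eqBa} is, in isolation, an immediate consequence of a single Gysin sequence, but their simultaneous satisfaction hinges on forcing the integer $n\cdot\mathrm{c}_2(\mathcal P)$ coming from $\widehat\pi_*g$ to lie in the piece that can be absorbed by the indeterminacy in $g$. All the remaining steps reduce to routine diagram chases once the base-change identities and the four Gysin sequences are in hand.
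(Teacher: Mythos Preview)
The paper does not prove Theorem~\ref{thm:thBAb}; it is quoted from \cite{Bouwknegt2015} after a two-sentence sketch (the Gysin sequence for $\widehat\pi$ produces some $\widehat H$ with $\widehat\pi_*\widehat H=\mathrm{c}_2(\mathcal P)$, unique up to $\widehat\pi^*\mathrm{H}^7(M)$, and the correspondence-space constraint is then imposed to reduce this freedom). So there is no paper proof to compare against; your outline is the natural fleshing-out of exactly that sketch, and the Gysin/base-change machinery you invoke is the right toolkit.

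That said, the step you yourself flag as the ``main obstacle'' is a genuine gap as written, not a routine detail. After obtaining $E=p^*g$ and deducing $\widehat\pi_*g=n\cdot\mathrm{c}_2(\mathcal P)$, you propose to kill $n$ by modifying $g$ within $\ker p^*=\widehat\pi^{\,*}\mathrm{c}_2(\mathcal P)\cup\mathrm{H}^3(\widehat{\mathcal P})$. But the projection formula gives $\widehat\pi_*\bigl(\widehat\pi^{\,*}\mathrm{c}_2(\mathcal P)\cup\alpha\bigr)=\mathrm{c}_2(\mathcal P)\cup\widehat\pi_*\alpha$ with $\widehat\pi_*\alpha\in\mathrm{H}^0(M)$, and by the Gysin sequence for $\widehat\pi$ the image of $\widehat\pi_*:\mathrm{H}^3(\widehat{\mathcal P})\to\mathrm{H}^0(M)$ is precisely $\ker(\cup\,\widehat{\mathrm{c}_2})$, which is zero whenever $\widehat{\mathrm{c}_2}$ is non-torsion in $\mathrm{H}^4(M)$. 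In that generic case your proposed correction is unavailable, so you must instead argue that $n=0$ from the start. (One also checks that $n$ is independent of the initial choice of $\widehat H_0$, so it really is an invariant obstruction that needs a separate argument to vanish.) Your appeal to an unspecified ``spectral-sequence argument'' is where the actual content lies; the $\dim M\le 6$ escape hatch you mention is not part of the hypotheses of the theorem as stated. The same slipperiness recurs in the uniqueness paragraph at ``controlling the resulting $\mathrm{H}^0$-indeterminacy lets us take $\alpha=\pi^*a$'': that inference is immediate if $\widehat{\mathrm{c}_2}$ is non-torsion and false in general, so the phrase hides the needed case analysis rather than supplying it.
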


\begin{remark}
If $\dim(B)\leq 6$, then $\mathrm{H}^7(B)=0$, so the indeterminacy $\widehat\pi^{*}(a \cup \mathrm{c}_2)$ of Theorem \ref{thm:thBAb}(ii) vanishes and diagram \eqref{correspondenceb} specifies the class $\widehat H$ uniquely \cite{Bouwknegt2015}. This is the case throughout the present work, where $B=\mathrm{S}^4$.
\end{remark}

From Lemma \ref{lem:principal_s3_condition}, the only Milnor bundles admitting the structure of a principal $\mathrm{S}^3$-bundle are $M_{m,0}$, principal for right translations, and $M_{0,n}$, principal for left translations. We fix once and for all the compatible orientations of Theorem \ref{thm:gysin} on the family $M_{m,0}$, so that \[ \mathrm{c}_2(M_{m,0})=\mathrm{e}(M_{m,0})=m, \] upon identifying $\mathrm{H}^4(\mathrm{S}^4;\mathbb{Z})\cong\mathbb{Z}$ via the orientation generator $u$. The fiberwise quaternionic conjugation $\kappa(x,v)=(x,\bar v)$ satisfies $\kappa\circ f_{0,-m}=f_{m,0}\circ\kappa$, since $\overline{v\,x^{-m}}=x^{m}\bar{v}$, and therefore descends to a diffeomorphism $M_{0,-m}\to M_{m,0}$ over $\mathrm{S}^4$. It intertwines the two principal structures: conjugation is an anti-automorphism of $\mathbb{H}$, $\overline{v\,g}=\bar g\,\bar v$, so it carries the right action $v\mapsto v\,g$ of $M_{0,-m}$ to the left action $\bar v\mapsto \bar g\,\bar v$ of $M_{m,0}$, that is, to the same principal action read through the group anti-automorphism $g\mapsto g^{-1}$. Hence $M_{0,-m}\cong M_{m,0}$ as principal $\mathrm{S}^3$-bundles. Since $\mathrm{c}_2$ is an invariant of the principal bundle, $\mathrm{c}_2(M_{0,n})=\mathrm{c}_2(M_{-n,0})=-n$; note that $\kappa$ reverses the fiber orientation, consistently with $\mathrm{e}(M_{0,n})=n$, so that $\mathrm{c}_2=-\mathrm{e}$ on this family. Throughout the principal theory (Definition \ref{def:principaltdual}, Theorem \ref{thm:tdualmilnor}, and Theorem \ref{thm:equivalence}), every principal Milnor bundle is understood to carry the compatible orientation of Theorem \ref{thm:gysin}, namely the one for which the Euler class of the associated rank-four bundle equals $\mathrm{c}_2$: on the family $M_{m,0}$ this agrees with the orientation fixed in Section \ref{sec:milnorbundles}, while on the family $M_{0,n}$ it is the opposite one, and it accordingly reverses the sign of the Gysin map on that family. With these orientations, $\kappa$ preserves the fiber orientations, so the isomorphism $M_{0,-m}\cong M_{m,0}$ preserves all the normalizations below. Finally, for an oriented $\mathrm{S}^3$-bundle $\pi:E\to\mathrm{S}^4$ we write $[a]\in\mathrm{H}^7(E;\mathbb{Z})$ for the unique class with $\pi_*[a]=a\,u$, the Gysin map being taken with respect to the orientation just specified when $E$ is principal; this is well defined because $\pi_*$ is an isomorphism in this degree, as $\mathrm{H}^7(\mathrm{S}^4)=\mathrm{H}^8(\mathrm{S}^4)=0$ in the Gysin sequence.

\begin{theorem} \label{thm:tdualmilnor}
Let $M_{m,0}$ be any principal Milnor bundle, equipped with the $H$-flux $H=[j]\in \mathrm{H}^7(M_{m,0};\mathbb Z)\cong \mathrm{H}^4(\mathrm{S}^4;\mathbb Z)\cong \mathbb{Z}$. Then we have the following spherical T-duality diagram
 \begin{equation} \label{eq:principalmilnordual}
\xymatrix{
& M_{m,0}\times_{\mathrm{S}^4}M_{0,-j} \ar[dl]_{\widehat  p}  \ar[dr]^{p} \\
(M_{m,0},[j]) \ar[dr]_{\pi} && (M_{0,-j},[m]) \ar[dl]^{\widehat \pi}  \\
& \mathrm{S}^4 & 
}
\end{equation}

In particular, $(M_{m,0},[j])$ and $(M_{0,-j},[m])$ are spherical T-dual in the sense of Definition \ref{def:principaltdual}. Moreover, 
\begin{align}
\mathrm{H}^0(M_{m,0})&\cong\mathrm{H}^0(M_{0,-j})\cong\mathrm{H}^7(M_{m,0})\cong\mathrm{H}^7(M_{0,-j})\cong\mathbb{Z}\\
\mathrm{H}^4(M_{m,0})&\cong\mathbb{Z}_m\\\mathrm{H}^4(M_{0,-j})&\cong\mathbb{Z}_j\\
    \mathrm{H}^7(M_{m,0}\times_{\mathrm{S}^4}M_{0,-j})&\cong\mathbb{Z}\oplus\mathbb{Z}_{\gcd(j,m)},
\end{align}
where throughout $\mathbb{Z}_a:=\mathbb{Z}/|a|\,\mathbb{Z}$, with the convention $\mathbb{Z}_0:=\mathbb{Z}$.
\end{theorem}
\begin{proof}
By Lemma \ref{lem:principal_s3_condition} and the discussion preceding the statement, the principal Milnor bundles are exactly those of the form $M_{n,0}$, with clutching by left multiplications, or $M_{0,n}$, with clutching by right multiplications:
\begin{equation}
t_{n,0}(x)v := x^{n}v \quad\text{and}\quad t_{0,n}(x)v := v\,x^{n},\qquad x\in \mathrm{S}^3,~v\in \mathbb H,
\end{equation}
so that
\[M_{n,0} = \mathrm{D}^4_+\times \mathrm{S}^3\cup_{f_{n,0}}\mathrm{D}^4_-\times \mathrm{S}^3,\qquad
M_{0,n} = \mathrm{D}^4_+\times \mathrm{S}^3\cup_{f_{0,n}}\mathrm{D}^4_-\times \mathrm{S}^3.\]
By the convention fixed above, the second Chern classes are $\mathrm{c}_2(M_{n,0})=n$ and $\mathrm{c}_2(M_{0,n})=-n$. Given as initial data $(M_{m,0},[j])$ with $[j]\in \mathrm{H}^7(M_{m,0};\mathbb Z)\cong \mathrm{H}^4(\mathrm{S}^4;\mathbb Z)$, the dual $\widehat{M_{m,0}}$ must satisfy $\mathrm{c}_2(\widehat{M_{m,0}})=\pi_*H=[j]$. Realizing this class within the family $M_{0,n}$ via $\mathrm{c}_2(M_{0,n})=-n=j$, i.e. $n=-j$, we obtain
\begin{equation}
   \widehat{M_{m,0}} = \mathrm{D}^4_+\times \mathrm{S}^3\cup_{f_{0,-j}} \mathrm{D}^4_-\times \mathrm{S}^3 = M_{0,-j},
\end{equation}
which indeed has $\mathrm{c}_2(M_{0,-j})=-(-j)=j$.

Now, observe that the Gysin sequence (Theorem \ref{thm:gysin}) provides both the cohomology of $M_{m,0}$ and $\widehat{M_{m,0}}$:
\begin{align}
\mathrm{H}^0(M_{m,0})&\cong\mathrm{H}^0(\widehat{M_{m,0}})\cong\mathrm{H}^7(M_{m,0})\cong\mathrm{H}^7(\widehat{M_{m,0}})\cong\mathbb{Z}\\
\mathrm{H}^4(M_{m,0})&\cong\mathbb{Z}_m\\\mathrm{H}^4(\widehat{M_{m,0}})&\cong\mathbb{Z}_j.
\end{align} 
Indeed, the cohomology $\mathrm{H}^4(M_{m,0})$ comes from
\begin{equation}\label{eq:0e4}
\xymatrix{
 \mathrm{H}^0(\mathrm{S}^4) \ar[d]^\cong \ar[r]^{\cup \mathrm{c}_2}& \mathrm{H}^4(\mathrm{S}^4)\ar[d]^\cong\ar[r]^{\pi^*} & \mathrm{H}^4(M_{m,0})\ar[d]^\cong\ar[r]^{\pi_*} & \mathrm{H}^1(\mathrm{S}^4) \ar[d]^\cong \\
 \mathbb{Z} \ar[r]^{\times m} & \mathbb{Z} \ar[r] & \ast \ar[r] & 0 
}
\end{equation}  
which implies that $\pi^*:\mathrm{H}^4(\mathrm{S}^4)\rightarrow\mathrm{H}^4(M_{m,0})$ has kernel $m\,\mathrm{H}^4(\mathrm{S}^4)$ and is surjective, so $\mathrm{H}^4(M_{m,0})\cong \mathbb{Z}_m$. The same computation,\emph{mutatis-mutandis}, verifies the claim for $\widehat{M_{m,0}}$.

Looking at
\begin{equation}\label{eq:3e7}
\xymatrix{
\mathrm{H}^7(\mathrm{S}^4) \ar[d]^\cong  \ar[r]^{\pi^*} & \mathrm{H}^7(M_{m,0}) \ar[d]^{\cong}\ar[r]^{\pi_*} & \mathrm{H}^{4}(\mathrm{S}^4) \ar[d]^\cong\ar[r]^{\cup \mathrm{c}_2} & \mathrm{H}^8(\mathrm{S}^4) \ar[d]^\cong \\
0 \ar[r] & \mathbb{Z}  \ar[r]^{\cong} & \mathbb{Z}  \ar[r] & 0
}
\end{equation}  
one derives that $\pi_*:\mathrm{H}^7(M_{m,0})\rightarrow\mathrm{H}^4(\mathrm{S}^4)$ is an isomorphism. Similarly, $\widehat\pi_*:\mathrm{H}^7(\widehat{M_{m,0}})\rightarrow\mathrm{H}^4(\mathrm{S}^4)$ is an isomorphism.
In this way, the flux $H\in\mathrm{H}^7(M_{m,0})$ is the unique class with $\pi_* H=\widehat{\mathrm{c}_2}=[j]$, while $\widehat H\in\mathrm{H}^7(\widehat{M_{m,0}})$ is the unique class with $\widehat\pi_*\widehat H=\mathrm{c}_2(M_{m,0})=[m]$. Therefore, $\widehat H = [m]$. This concludes the first part of the result.

It remains to verify the third condition of Definition \ref{def:principaltdual},
\begin{equation}
p^*(\widehat H) - \widehat p^*(H) = 0 \in\mathrm{H}^7(M_{m,0}\times_{\mathrm{S}^4} \widehat{M_{m,0}})  \label{corrcond}
\end{equation}
For the remainder of the argument we assume $m,j\neq 0$; since every group involved depends only on $|m|$ and $|j|$, we may and do further assume $m,j>0$. (If $m=0$, then $M_{0,0}=\mathrm{S}^4\times\mathrm{S}^3$ and $\mathrm{H}^3(M_{0,0})\cong\mathbb{Z}$, so the first Gysin subsequence below degenerates to $\mathbb{Z}\xrightarrow{\times j}\mathbb{Z}\xrightarrow{\widehat p^{\,*}}\mathrm{H}^7(\,\cdot\,)\rightarrow\mathbb{Z}\rightarrow 0$, yielding $\mathbb{Z}\oplus\mathbb{Z}_{|j|}$, in agreement with $\gcd(j,0)=|j|$; the case $j=0$ is symmetric and yields $\mathbb{Z}\oplus\mathbb{Z}_{|m|}$, while $m=j=0$ gives $\mathbb{Z}\oplus\mathbb{Z}$ by the K\"unneth formula, consistently with $\mathbb{Z}_0=\mathbb{Z}$. In each degenerate case the pullback equality is immediate: the flux carried by the non-trivial side vanishes, being $[0]$, and the pullback of the flux carried by the trivial side is $j$ times, respectively $m$ times, a class of order dividing $j$, respectively $m$, hence zero.) With $m,j>0$, the desired cohomology group appears in two short exact Gysin subsequences
\begin{equation}
\xymatrix{
\mathrm{H}^3(M_{m,0})\ar[d]^\cong \ar[r]^{\cup j} & \mathrm{H}^7(M_{m,0})\ar[d]^\cong \ar[r]^{\widehat p^*\quad} & \mathrm{H}^7(M_{m,0}\times_{\mathrm{S}^4} \widehat{M_{m,0}}) \ar[d] \ar[r]^{\quad\widehat p_*} & 
\mathrm{H}^{4}(M_{m,0})\ar[d]^\cong\ar[r]^{\cup j} & \mathrm{H}^8(M_{m,0}) \ar[d]^\cong \\
0 \ar[r]  & \mathbb{Z} \ar[r] & \ast \ar[r] & \mathbb{Z}_m  \ar[r] & 0
} \label{pseq}
\end{equation}  
and
\begin{equation}
\xymatrix{
\mathrm{H}^3(\widehat{M_{m,0}}) \ar[d]^\cong \ar[r]^{\cup m} & \mathrm{H}^7(\widehat{M_{m,0}})\ar[d]^\cong \ar[r]^{p^*\quad} & \mathrm{H}^7(M_{m,0}\times_{\mathrm{S}^4} \widehat{M_{m,0}}) \ar[d] 
\ar[r]^{\quad p_*} & \mathrm{H}^{4}(\widehat{M_{m,0}})\ar[d]^\cong\ar[r]^{\cup m}& \mathrm{H}^8(\widehat{M_{m,0}}) \ar[d]^\cong \\
0 \ar[r]  & \mathbb{Z} \ar[r] & \ast \ar[r] & \mathbb{Z}_j  \ar[r] & 0
}
\end{equation}  
The first sequence exhibits $\mathrm{H}^7(M_{m,0}\times_{\mathrm{S}^4} \widehat{M_{m,0}})$ as an extension of $\mathbb{Z}_m$ by $\mathbb{Z}$, and the second as an extension of $\mathbb{Z}_j$ by $\mathbb{Z}$. In either case the torsion subgroup meets the free kernel trivially and so injects into the cyclic quotient; hence
\begin{equation}
\mathrm{H}^7(M_{m,0}\times_{\mathrm{S}^4}\widehat{M_{m,0}})\cong\mathbb{Z}\oplus\mathbb{Z}_i\qquad\text{for some }i\geq 1\text{ with } i\mid m,\ i\mid j.
\end{equation}
Let $(a,b)$, with $a\in\mathbb{Z}$ and $b\in\mathbb{Z}_i$, be the image of $1\in \mathbb{Z}$ under $\widehat p^*$:
\begin{equation}
\xymatrix{
\mathrm{H}^7(M_{m,0}) \ar[d]^\cong \ar[r]^{\widehat p^*\quad} & \mathrm{H}^7(M_{m,0}\times_{\mathrm{S}^4} \widehat{M_{m,0}}) \ar[d]^\cong \\
\mathbb{Z} \ar[r] & \mathbb{Z}\oplus\mathbb{Z}_i \\
1 \ar[r] & (a,b). }  
\end{equation}

By the exactness of \eqref{pseq}, the map $\widehat p_*:\mathrm{H}^7(M_{m,0}\times_{\mathrm{S}^4} \widehat{M_{m,0}})\rightarrow\mathrm{H}^4(M_{m,0})=\mathbb{Z}_m$ is surjective. Therefore, we may compute the quotient via the Smith normal form of the relation matrix $\left(\begin{smallmatrix} a & b\\ 0 & i\end{smallmatrix}\right)$:
\begin{equation}
\mathbb{Z}_m \cong \mathrm{Im}(\widehat p_*) \cong \frac{\mathrm{H}^7(M_{m,0}\times_{\mathrm{S}^4} \widehat{M_{m,0}})}{\ker(\widehat p_*)} \cong \frac{\mathrm{H}^7(M_{m,0}\times_{\mathrm{S}^4} \widehat{M_{m,0}})}{\mathrm{Im}(\widehat p^*)} \cong \frac{\mathbb{Z}\oplus\mathbb{Z}_i}{\langle(a,b)\rangle} \cong \mathbb{Z}_{\frac{ai}{\gcd(a,b,i)}}\oplus\mathbb{Z}_{\gcd(a,b,i)}. 
\end{equation}
Comparing orders on the two sides gives $|a|\,i=m$, so that, after replacing the chosen generator of the free summand by its negative if necessary, $a=m/i$. For the quotient to be isomorphic to the single cyclic group $\mathbb{Z}_m$, the first invariant factor must be trivial, that is, $\gcd(a, b, i) = \gcd(m/i, b, i) = 1$. Similarly, applying this to $\widehat{M_{m,0}}$ yields:
\begin{equation}
p^*:\mathrm{H}^7(\widehat{M_{m,0}})\cong\mathbb{Z}\rightarrow\mathrm{H}^7(M_{m,0}\times_{\mathrm{S}^4} \widehat{M_{m,0}})\cong\mathbb{Z}\oplus\mathbb{Z}_i:1\mapsto(\widehat a,\widehat b),\qquad \widehat a=\pm\,j/i \,, 
\end{equation}
with the analogous constraint $\gcd(\widehat a, \widehat b, i)=1$. Since the generator of the free summand has already been fixed by the normalization $a=m/i$, the sign of $\widehat a$ is no longer a matter of choice, and we determine it now. In the Serre spectral sequence of the fibration $\mathrm{S}^3\times\mathrm{S}^3\to M_{m,0}\times_{\mathrm{S}^4}\widehat{M_{m,0}}\to\mathrm{S}^4$, the two fiber generators $x,y\in E_2^{0,3}$, taken with the compatible orientations fixed above, transgress to the respective Euler classes, $d_4(x)=\mathrm{c}_2(M_{m,0})\,u=m\,u$ and $d_4(y)=\mathrm{c}_2(\widehat{M_{m,0}})\,u=j\,u$, while $\widehat p^{\,*}(1)$ and $p^{*}(1)$ are detected in $E_\infty^{4,3}$ by $u\otimes x$ and $u\otimes y$, respectively; the detections are equalities because the filtration of $\mathrm{H}^7$ has a single nonzero quotient (the same mechanism is used, with full details, in the proof of Theorem \ref{thm:nonprincipaltdualmilnor} below). The Leibniz rule gives $d_4(xy)=m\,(u\otimes y)-j\,(u\otimes x)$, and this is the only relation on the total-degree-seven line; hence
\begin{equation}
j\,\widehat p^{\,*}(1)\;=\;m\,p^{*}(1)\qquad\text{in }\mathrm{H}^7(M_{m,0}\times_{\mathrm{S}^4}\widehat{M_{m,0}}).
\end{equation}
Comparing the free components, $j\,(m/i)=m\,\widehat a$, whence $\widehat a=+\,j/i$.
 
Finally, we use the push--pull identity associated with the Cartesian square \eqref{correspondenceb} (base change for integration along the fiber), which holds for any fiber product of oriented sphere bundles and is independent of the duality:
\begin{equation}
\widehat\pi^*\pi_*=p_*\widehat p^*:\mathrm{H}^7(M_{m,0})\cong\mathbb{Z}\rightarrow\mathrm{H}^4(\widehat{M_{m,0}})\cong\mathbb{Z}_j. \label{comm}
\end{equation}
Because $\pi_*:\mathrm{H}^7(M_{m,0})\rightarrow\mathrm{H}^4(\mathrm{S}^4)$ is an isomorphism and $\widehat\pi^*:\mathrm{H}^4(\mathrm{S}^4)\rightarrow\mathrm{H}^4(\widehat{M_{m,0}})$ is surjective, $\widehat\pi^*\pi_*$ maps the generator $1 \in \mathrm{H}^7(M_{m,0})$ to an element of order $j$ in $\mathrm{H}^4(\widehat{M_{m,0}})\cong\mathbb{Z}_j$. On the other hand, we have established that:
\begin{equation}
\widehat p^*(1)=(m/i, b)\in\mathrm{H}^7(M_{m,0}\times_{\mathrm{S}^4}\widehat{M_{m,0}})\cong\mathbb{Z}\oplus\mathbb{Z}_i. 
\end{equation}
The kernel of the map $p_*:\mathrm{H}^7(M_{m,0}\times_{\mathrm{S}^4}\widehat{M_{m,0}})\rightarrow\mathrm{H}^4(\widehat{M_{m,0}})$ is identically the image of $p^*:\mathrm{H}^7(\widehat{M_{m,0}})\rightarrow\mathrm{H}^7(M_{m,0}\times_{\mathrm{S}^4}\widehat{M_{m,0}})$, which is generated by $(j/i,\widehat b)$. Therefore, $p_*\widehat p^*(1)$ possesses an order of $j$ precisely if $\widehat p^*(1)=(m/i,b)$ has order $j$ in the quotient space $(\mathbb{Z}\oplus\mathbb{Z}_i)/\langle (j/i,\widehat b) \rangle$.

The order of $(m/i,b)$ in this quotient divides $j$: since $i$ divides $j$, we have $jb=0$ in $\mathbb{Z}_i$, so
\begin{equation}
j\left(\frac{m}{i},b\right) = \left(\frac{jm}{i},\, 0\right)
= m\left(\frac{j}{i},\widehat b\right)\in\ker(p_*),
\end{equation}
the last equality holding because $i$ divides $m$ as well, whence $m\widehat b=0$ in $\mathbb{Z}_i$. Let $d:=\gcd(j,m)$. Because $i$ is a common divisor of $j$ and $m$, the ratio $d/i$ is an integer, and $ji/d=(j/d)\,i$ is an integer multiple of $i$. Therefore $(ji/d)\,b=0$ in $\mathbb{Z}_i$ and, likewise, $(mi/d)\,\widehat b=0$, so
\begin{equation}
\frac{ji}{d}\left(\frac{m}{i},b\right) = \left(\frac{jm}{d},\, 0\right)
= \frac{mi}{d}\left(\frac{j}{i},\widehat b\right)\in\ker(p_*).
\end{equation}
Hence $\widehat p^{\,*}(1)=(m/i,b)$ has order dividing $ji/d$ in the quotient. To satisfy \eqref{comm}, its order must be exactly $j$; thus $j$ divides $ji/d$, that is, $d$ divides $i$. Combined with $i\mid d$, this forces $i=d=\gcd(j,m)$. Thus:
\begin{equation}\label{eq:sete}
    \mathrm{H}^7(M_{m,0}\times_{\mathrm{S}^4}\widehat{M_{m,0}})\cong\mathbb{Z}\oplus\mathbb{Z}_{\gcd(j,m)} 
\end{equation}
\begin{equation}
a = \frac{m}{d}, \quad \text{and} \quad \widehat a = \frac{j}{d},\qquad d=\gcd(j,m).
\end{equation}

As the final step, we compute the pullbacks of the two fluxes to the correspondence space. Since $d$ divides both $j$ and $m$, we have $j\,b=0$ and $m\,\widehat b=0$ in $\mathbb{Z}_d$, so the torsion components of both pullbacks vanish:
\begin{equation}
\widehat p^{\,*} H = \widehat p^{\,*}(j \cdot 1) = j\,(a, b) = \left(\frac{jm}{d},\; j\,b\right) = \left(\frac{jm}{d},\; 0\right),
\end{equation}
\begin{equation}
p^{*}\widehat H = p^{*}(m \cdot 1) = m\,(\widehat a, \widehat b) = \left(\frac{jm}{d},\; m\,\widehat b\right) = \left(\frac{jm}{d},\; 0\right).
\end{equation}
Therefore $\widehat p^{\,*} H = p^{*}\widehat H$: the class $\widehat H$, determined by $\widehat\pi_*\widehat H=\mathrm{c}_2(M_{m,0})$, agrees with $H$ once both are pulled back to the correspondence space. This completes the verification of Definition \ref{def:principaltdual} for the pair $(M_{m,0},[j])$, $(M_{0,-j},[m])$.
\end{proof}

\vspace{1em}

\subsection{Non-principal Spherical T-duality}
\label{sec:nonprincipal}

Following \cite{Bouwknegt20152}, we now discuss spherical T-duality for oriented linear $\mathrm{S}^3$-bundles, not necessarily principal. The formalism below is based on Euler classes rather than second Chern classes and subsumes the principal case of the previous subsection, the two being reconciled by the dichotomy $\mathrm{c}_2 = \pm\,\mathrm{e}$ recorded there.

\begin{convention}[Euler class of an oriented linear $\mathrm{S}^3$-bundle]\label{conv:euler}
A principal $\mathrm{SO}(4)$-bundle $P\to B$ carries no Euler class of its own; the Euler class is an invariant of an \emph{oriented vector (or sphere) bundle}. Throughout, the \emph{Euler class of an oriented linear $\mathrm{S}^3$-bundle} $E\to B$, principal or not, means the Euler class of its associated oriented rank-four vector bundle,
\[
\mathrm{e}(E) := \mathrm{e}(\xi_P)\in\mathrm{H}^4(B;\mathbb{Z}),\qquad \xi_P = P\times_{\mathrm{SO}(4)}\mathbb{R}^4,
\]
where $E=S(\xi_P)=P\times_{\mathrm{SO}(4)}\mathrm{S}^3$ is the unit-sphere bundle of $\xi_P$. The three objects $P$, $\xi_P$, and $E$ share the same classifying data, and for a Milnor bundle $E=M_{m,n}$ over $\mathrm{S}^4$ this number is $\mathrm{e}(M_{m,n})=(m+n)u$ (Section \ref{sec:milnorbundles}). Thus, when we speak below of $E$ and $\widehat E$ ``having the same Euler class,'' we mean an equality of these associated vector-bundle Euler classes in $\mathrm{H}^4(B;\mathbb{Z})$, not a comparison between principal bundles. In the principal case this is consistent with Section \ref{sec:milnorbundles}: on the family $M_{m,0}$ one has $\mathrm{c}_2=\mathrm{e}$, while on the family $M_{0,n}$ one has $\mathrm{c}_2=-\mathrm{e}$. This is the same identification used in the principal case (Theorem \ref{thm:gysin}), where $\mathrm{e}(\xi_P)$ is matched with the second Chern class of the associated complex rank-two bundle.
\end{convention}

Given an oriented $\mathrm{S}^3$-non-principal bundle $E \xrightarrow{\pi} B$, there exists an analogous Gysin sequence for singular cohomology \cite[Proposition 14.33]{bott1982differential}:
\begin{equation}\label{gysin}
\cdots \to \mathrm{H}^p(B; \mathbb{Z}) \xrightarrow{\cup \mathrm{e}(E)} \mathrm{H}^{p+4}(B; \mathbb{Z})  
\xrightarrow{\pi^*} \mathrm{H}^{p+4}(E; \mathbb{Z}) \xrightarrow{\pi_{\ast}}  
\mathrm{H}^{p+1}(B; \mathbb{Z}) \xrightarrow{\cup \mathrm{e}(E)} \cdots
\end{equation}
Motivated by this, we adopt the following definition, which is Definition \ref{def:principaltdual} with the second Chern class replaced by the Euler class of Convention \ref{conv:euler}.  \begin{definition}[Spherical T-duality, oriented linear case]\label{def:nonprincipaltdual} Let $\pi:E\to B$ and $\widehat\pi:\widehat E\to B$ be oriented linear $\mathrm{S}^3$-bundles, and let $H\in\mathrm{H}^7(E;\mathbb{Z})$ and $\widehat H\in\mathrm{H}^7(\widehat E;\mathbb{Z})$. The pairs $(E,H)$ and $(\widehat E,\widehat H)$ are \emph{spherical T-dual} if \begin{align} \pi_{\ast}H &= \mathrm{e}(\widehat E), \\ \widehat\pi_{\ast}\widehat H &= \mathrm{e}(E), \end{align} and their pullbacks to the correspondence space agree, \[ \widehat p^{\,*}H = p^{*}\widehat H \ \ \text{ in } \ \mathrm{H}^7\big(E\times_{B}\widehat E;\mathbb{Z}\big), \] where $\widehat p$ and $p$ denote the canonical projections of $E\times_B\widehat E$ onto $E$ and $\widehat E$, respectively. When $E$ and $\widehat E$ are principal, this reduces to Definition \ref{def:principaltdual} under the identification of Theorem \ref{thm:gysin}. \end{definition} 

\begin{theorem}\label{thm:nonprincipaltdualmilnor}
Let $m,k\in\mathbb{Z}$ and let $M_{m,k-m}$ be the Milnor bundle of Euler class $\mathrm{e}=ku$, equipped with the $H$-flux $H = [k] \in \mathrm{H}^7(M_{m,k-m}; \mathbb{Z}) \cong \mathbb{Z}$, whose fiber integral $\pi_*H = ku$ equals its Euler class. Then, for every integer $j \in \mathbb{Z}$, the bundle $M_{j,k-j}$, which also has Euler class $ku$, equipped with the flux $\widehat{H} = [k]$, is a spherical T-dual of $M_{m,k-m}$ in the sense of Definition \ref{def:nonprincipaltdual}. The duality fits into the diagram:
\begin{equation} \label{eqn:nonprincipalmilnordual}
\xymatrix{
& M_{m,k-m} \times_{\mathrm{S}^4} M_{j,k-j} \ar[dl]_{\widehat{p}}  \ar[dr]^{p} \\
(M_{m,k-m}, [k]) \ar[dr]_{\pi} && (M_{j,k-j}, [k]) \ar[dl]^{\widehat{\pi}}  \\
& \mathrm{S}^4 & 
}
\end{equation}
If $k = 1$, then both $M_{m,1-m}$ and $M_{j,1-j}$ are homeomorphic to $\mathrm{S}^7$. Each of them fails to be diffeomorphic to the standard sphere whenever its $\lambda$-invariant is nonzero, that is, whenever
\[
(-1 + 2m)^2 \not\equiv 1 \pmod{7}, \qquad\text{respectively}\qquad (-1 + 2j)^2 \not\equiv 1 \pmod{7};
\]
the vanishing of $\lambda$, being a $\mathbb{Z}_7$-valued invariant of a class in $\Theta^7\cong\mathbb{Z}_{28}$, does not by itself imply that the sphere is standard. For $m=1$ the bundle $M_{1,0}\to\mathrm{S}^4$ is the Hopf bundle, with total space the standard $\mathrm{S}^7$; for $m=2$ one has $\lambda=1\neq 0$, and $M_{2,-1}$ is the Gromoll--Meyer sphere. Consequently, spherical T-duality relates homotopy $7$-spheres carrying distinct smooth structures and, in particular, relates the standard sphere to an exotic one.
\end{theorem}

\begin{proof}
Let $E = M_{m,k-m}$ and $\widehat{E} = M_{j,k-j}$. In the sense of Convention \ref{conv:euler}, both have the same Euler class, $\mathrm{e}(E) = (m+(k-m))u = ku = (j+(k-j))u = \mathrm{e}(\widehat{E})$, where $u \in \mathrm{H}^4(\mathrm{S}^4; \mathbb{Z})$ is the orientation generator. No use is made below of whether either bundle is principal.

Applying the Gysin sequence \eqref{gysin} with $p=3$ over $B=\mathrm{S}^4$ gives the exact segment \[ \mathrm{H}^{7}(\mathrm{S}^4;\mathbb{Z}) \xrightarrow{\ \pi^*\ } \mathrm{H}^{7}(E;\mathbb{Z}) \xrightarrow{\ \pi_*\ } \mathrm{H}^{4}(\mathrm{S}^4;\mathbb{Z}) \xrightarrow{\ \cup\,\mathrm{e}(E)\ } \mathrm{H}^{8}(\mathrm{S}^4;\mathbb{Z}). \] The source of $\pi^*$ and the target of $\cup\,\mathrm{e}(E)$ both vanish, so $\pi_*$ is injective and surjective; integration along the fiber thus yields an isomorphism $\pi_*: \mathrm{H}^7(E; \mathbb{Z}) \xrightarrow{\cong} \mathrm{H}^4(\mathrm{S}^4; \mathbb{Z}) \cong \mathbb{Z}$. Let $\eta_E \in \mathrm{H}^7(E; \mathbb{Z})$ be the normalized generator such that $\pi_* \eta_E = u$, and similarly define $\eta_{\widehat{E}}$ for $\widehat{E}$. The assigned fluxes are $H = k \eta_E$ and $\widehat{H} = k \eta_{\widehat{E}}$.

We first verify the two fiber-integration conditions of Definition \ref{def:nonprincipaltdual}:
\[
\pi_* H = \pi_*(k \eta_E) = k u = \mathrm{e}(\widehat{E}), \quad \text{and} \quad \widehat{\pi}_* \widehat{H} = \widehat{\pi}_*(k \eta_{\widehat{E}}) = k u = \mathrm{e}(E).
\]
It remains to verify the correspondence space condition $\widehat{p}^*H = p^*\widehat{H}$ on the fiber product $C = E \times_{\mathrm{S}^4} \widehat{E}$. To do so, we analyze $\mathrm{H}^7(C; \mathbb{Z})$ using the Serre spectral sequence for the fibration $\mathrm{S}^3 \times \mathrm{S}^3 \to C \to \mathrm{S}^4$. 

The relevant generators of the $E_2$ page are $u \in E_2^{4,0} \cong \mathrm{H}^4(\mathrm{S}^4)$ and the fiber generators $x, y \in E_2^{0,3} \cong \mathrm{H}^3(\mathrm{S}^3 \times \mathrm{S}^3)$, pulled back from the fibers of $E$ and of $\widehat{E}$ respectively under the two projections. Since $C\to\mathrm{S}^4$ is the fiber product, the projections $\widehat p$ and $p$ are maps of fibrations over $\mathrm{S}^4$, and naturality of the spectral sequence identifies the transgressions of $x$ and $y$ with those of the fiber generators of $E$ and $\widehat E$; these are the respective Euler classes, so \[ d_4(x) = \mathrm{e}(E) = k u,\qquad d_4(y) = \mathrm{e}(\widehat E) = k u . \]

On the total-degree-seven line, the only possibly nonzero bidegree is $(4,3)$, since $\mathrm{H}^{p}(\mathrm{S}^4)=0$ for $p\neq 0,4$ and $\mathrm{H}^{7}(\mathrm{S}^3\times\mathrm{S}^3)=0$, the fiber being six-dimensional. Hence $\mathrm{H}^7(C;\mathbb{Z})=E_\infty^{4,3}$, and $E_4^{4,3}$ is generated by $u \otimes x$ and $u \otimes y$. The only relation comes from the differential $d_4: E_4^{0,6} \to E_4^{4,3}$ acting on the volume form $xy$ of the fiber. By the Leibniz rule:
\[
d_4(xy) = d_4(x)y - x d_4(y) = k u y - k u x.
\]
Since the differentials out of position $(4,3)$ vanish (their targets $E^{8,0}$ and beyond are zero), $E_\infty^{4,3} = E_4^{4,3}/\mathrm{Im}(d_4)$, and the image of $d_4(xy) = kuy - kux$ imposes precisely the relation $k(u \otimes x) = k(u \otimes y)$. 
It remains to identify the pullbacks of the fluxes with these classes. The projection $\widehat p: C \to E$ is a map of fibrations over $\mathrm{S}^4$, covering the identity of the base and restricting on fibers to the projection $\mathrm{S}^3\times\mathrm{S}^3\to\mathrm{S}^3$ onto the first factor; by naturality of the Serre spectral sequence, the induced map on $E_2$-pages sends the fiber generator of $\mathrm{H}^3(\mathrm{S}^3)$ to $x$, and hence sends the class $\eta_E$, which in the spectral sequence of $E$ is detected in $E_\infty^{4,3}$ by $u\otimes(\text{fiber generator})$, to the class detected by $u \otimes x$. Symmetrically, $p^*\eta_{\widehat{E}}$ is detected by $u \otimes y$. Since in both cases the filtration of $\mathrm{H}^7$ has a single nonzero quotient, these detections are equalities, and the relation established above gives
\[
\widehat{p}^* H = \widehat{p}^*(k \eta_E) = k(u \otimes x) = k(u \otimes y) = p^*(k \eta_{\widehat{E}}) = p^* \widehat{H},
\]
which is the required pullback equality. Together with the two fiber-integration conditions verified above, this shows that $(E,[k])$ and $(\widehat E,[k])$ are spherical T-dual in the sense of Definition \ref{def:nonprincipaltdual}.
\end{proof}

\vspace{1em}

\section{Higher-Dimensional Logarithmic Transformations, Exotic Hopf Manifolds and Spherical T-duality}
\label{sec:logtrans_final}

In this section, we extend the concept of logarithmic transformations \cite{gompfmrowka, GOMPF1991479} to a larger class of manifolds. We are interested in their topological descriptions rather than their algebraic or complex ones. To motivate our proposal for this \emph{generalized logarithmic transformation}, we first study in detail, following \cite{ZENTNER200637}, the logarithmic transformations performed on two regular fibers of the elliptic fibration associated with the Hopf surface $\mathrm{S}^1\times\mathrm{S}^3$. Our logarithmic transformations act on manifolds of the form $\Sigma\times\mathrm{S}^1$, with $\Sigma$ a homotopy sphere, and may change their diffeomorphism type. We then explain the relationship between these surgeries and $\star$-diagrams, and finally connect both notions to that of spherical T-duality.

\vspace{1em}

\subsection{Logarithmic transformations on \texorpdfstring{$\mathrm{T}^2$}{T²}-fibrations}
\label{sec:topological}

Recall that a $C^{\infty}$ elliptic fibration is a smooth surjective map $\pi: V\rightarrow C$ from a closed oriented 4-manifold $V$ onto a closed oriented surface $C$ that restricts to a $\mathrm{T}^2$-bundle over an open dense subset $U\subset C$. The fibers over points of $U$ are called \emph{regular}. 

\begin{definition}[Logarithmic transformations, $C^{\infty}$-version \cite{GOMPF1991479}]\label{def:classical-log}
Let $\pi: V\rightarrow C$ be as above. We say that a 4-dimensional manifold $W$ is obtained from $V$ via \emph{logarithmic transformations} if $W$ can be built out of $V$ via the following procedure: Choose regular fibers $F_1,\ldots, F_k$ in $V$ with disjoint closed tubular neighborhoods $\nu F_i,~i=1,\ldots,k$, with each $\nu F_i$ the pre-image of a disk under $\pi$. Each $\nu F_i$ is the preimage of a disk of regular values; since fiber bundles over disks are trivial, $\nu F_i$ is diffeomorphic to $\mathrm{T}^2\times\mathrm{D}^2$, and in particular $\partial\nu F_i\cong\mathrm{T}^2\times\mathrm{S}^1$. Delete the interior $\mathring{\nu F_i}$ of each $\nu F_i$ from $V$ and replace it with $\mathrm{T}^2\times\mathrm{D}^2$, glued by some diffeomorphism $\varphi_i: \mathrm{T}^2\times\mathrm{S}^1\rightarrow \partial \nu F_i$.
\end{definition}

\begin{example}
Consider the Hopf surface $\mathrm{S}^1\times\mathrm{S}^3$, endowed with the elliptic fibration $\pi:=\mathrm{pr}_2\circ(\mathrm{id}\times h):\mathrm{S}^1\times\mathrm{S}^3\rightarrow\mathrm{S}^2$, where
\[h:\mathrm{S}^3\rightarrow \mathrm{S}^2\]
is the Hopf map; this is a $\mathrm{T}^2$-bundle, so every fiber is regular. We first describe $\mathrm{S}^3$ as two solid tori $\mathrm{S}^1\times\mathrm{D}^2$ glued together: the two solid tori are the preimages, under $h$, of the two closed hemispheres of $\mathrm{S}^2$ (those around the poles $[1:0]$ and $[0:1]$).

Spelling it out, write $\mathrm{S}^3=\{(z,w)\in \mathbb C^2 \mid |z|^2+|w|^2=2\}$. The Hopf fibration is given by the map $h:(z,w)\mapsto [z:w]\in \mathbb{CP}^1\cong \mathrm{S}^2$. Set
\[\mathrm{S}^3_{+}=\{(z,w)\in \mathrm{S}^3 \mid 0\leq |w|^2\leq 1\}\]
\[\mathrm{S}^3_-=\{(z,w)\in \mathrm{S}^3 \mid 0\leq |z|^2\leq 1\}.\]
There are diffeomorphisms
\[f_+:\mathrm{S}^3_+\rightarrow \mathrm{S}^1\times \mathrm{D}^2, \quad f_+(z,w):=\left(\dfrac{z}{|z|},\dfrac{w}{z}\right)\]
\[f_-:\mathrm{S}^3_-\rightarrow \mathrm{S}^1\times \mathrm{D}^2, \quad f_-(z,w):=\left(\dfrac{w}{|w|},\dfrac{z}{w}\right).\]

The restriction of $f_+\circ f_-^{-1}$ to the boundary $\mathrm{S}^1\times\partial \mathrm{D}^2\rightarrow \mathrm{S}^1\times\partial \mathrm{D}^2$ is given by
\[f_+\circ f_-^{-1}(u,\xi)=(u\xi,\bar\xi).\]
Extending this map by the identity on the extra circle factor, and writing $\mathrm{T}^2=\mathrm{S}^1\times\mathrm{S}^1$ with the first factor the extra one, yields the diffeomorphism
\[\zeta:\mathrm{T}^2\times\partial\mathrm{D}^2\rightarrow \mathrm{T}^2\times\partial\mathrm{D}^2,\qquad \zeta(t,u,\xi)=(t,\,u\xi,\,\bar\xi),\]
so that the Hopf surface $\mathrm{S}^1\times\mathrm{S}^3$ is given by the gluing
\begin{equation}\label{eq:Hopf-zentner}
\mathrm{S}^1\times\mathrm{S}^3=\left(\mathrm{T}^2\times\mathrm{D}^2\right) \cup_{\zeta} \left(\mathrm{T}^2\times\mathrm{D}^2\right).
\end{equation}

Let $X'$ be the manifold obtained from the Hopf surface by performing a logarithmic transformation on the fibers $F_{\pm}$ over the North and South poles $x_+=[1:0],~x_-=[0:1]$, respectively. Denote by $\varphi_{\pm}$ the respective diffeomorphisms $\varphi_{\pm}:\mathrm{T}^2\times\mathrm{S}^1\rightarrow \partial\nu F_{\pm}$. According to the decomposition \eqref{eq:Hopf-zentner}, we may take $\nu F_{\pm}$ to be the copies of $\mathrm{T}^2\times\mathrm{D}^2_{1/2}$ inside the two pieces, where $\mathrm{D}^2_{1/2}\subset\mathrm{D}^2$ denotes the closed disk of radius $1/2$; the boundaries $\partial\nu F_{\pm}$ are then the inner boundaries of the two copies of $\mathrm{T}^2\times \left(\mathrm{D}^2 \setminus \mathring{\mathrm{D}}^2_{1/2}\right)$. Let $X_{\pm}$ be
\[X_{\pm}=\left(\mathrm{T}^2\times \left(\mathrm{D}^2 \setminus \mathring{\mathrm{D}}^2_{1/2}\right)\right)\cup_{\varphi_{\pm}} \left(\mathrm{T}^2\times\mathrm{D}^2\right).\]
Then
\begin{equation}\label{eq:log-zentner}
X'=X_+\cup_{\zeta}X_-.
\end{equation}
\end{example}

\vspace{1em}

In \cite{ZENTNER200637}, Zentner shows that if $X'$ has the same integral homology as $\mathrm{S}^1\times\mathrm{S}^3$, then $X'$ is in fact diffeomorphic to $\mathrm{S}^1\times\mathrm{S}^3$. Here, we propose a notion of generalized logarithmic transformation applied to manifolds of the form $\Sigma\times\mathrm{S}^1$, where $\Sigma$ is a homotopy sphere that bounds a parallelizable manifold. Unlike in the Hopf surface case, these transformations may change the diffeomorphism type of the resulting manifold while preserving the integral homology of $\Sigma\times\mathrm{S}^1$. This is supported by the following theorem:

\begin{theorem}[Brieskorn--Van de Ven \cite{BRIESKORN1968389}]\label{thm:brieskorn-van-de-ven}
Let $\Sigma$ and $\Sigma'$ be homotopy spheres of dimension $n \geq 5$. Then $\Sigma\times \mathrm{S}^1$ is diffeomorphic to $\Sigma'\times\mathrm{S}^1$ if and only if $\Sigma$ is diffeomorphic to $\Sigma'$.
\end{theorem}

\vspace{1em}

\subsection{A generalized logarithmic transformation proposal}

Let $G$ be a compact Lie group and assume there exists a linear action of $G$ on the standard unit spheres $\mathrm{S}^k,~\mathrm{S}^l, ~k,l\geq 2$. Let 
\[\mathrm{S}^{k+l+1}=\left(\mathrm{D}^{k+1}\times\mathrm{S}^l\right) \cup \left(\mathrm{S}^k\times\mathrm{D}^{l+1}\right)\] 
be the $G$-manifold obtained by regarding $\mathrm{S}^{k+l+1}$ as the unit sphere of $\mathbb{R}^{k+1}\oplus\mathbb{R}^{l+1}$, on which $G$ acts by the orthogonal direct sum $\Delta_1\oplus\Delta_2$ of the two linear actions; the induced action restricts to the product action on the Clifford-type common boundary $\mathrm{S}^k\times\mathrm{S}^l$ and preserves the two pieces. Set $r(x,y)=a(x)b(y)^{-1}$ for smooth maps $a:\mathrm{S}^k\rightarrow G,~b: \mathrm{S}^l\rightarrow G$ satisfying the covariance conditions $a(gx)=ga(x)g^{-1}$ and $b(gy)=gb(y)g^{-1}$ for every $g\in G$. Then $r$ is equivariant with respect to the diagonal subgroup $\Delta G=\{(g,g):g\in G\}\leq G\times G$, acting on $\mathrm{S}^k\times\mathrm{S}^l$ by the product action and on $G$ by conjugation. Namely,
\[r(gx,gy)=gr(x,y)g^{-1}, \quad \forall g\in G.\]
Indeed,
\begin{align}
	r(gx,gy) &= a(gx)b(gy)^{-1} \nonumber \\
	&= ga(x)g^{-1}(gb(y)g^{-1})^{-1} \nonumber \\
	&= ga(x)g^{-1}gb(y)^{-1}g^{-1} \nonumber \\
	&= ga(x)b(y)^{-1}g^{-1}.
\end{align}
More generally, given a $G$-manifold $X$ and a smooth map $\Phi:X\to G$, we write
\begin{equation}\label{eq:hat-operator}
\widehat{\Phi}:X\to X,\qquad \widehat{\Phi}(z):=\Phi(z)\cdot z,
\end{equation}
the action being the one fixed on $X$; the notation is used throughout, and the relevant action will always be specified. For $X=\mathrm{S}^k\times\mathrm{S}^l$ with the diagonal action and $\Phi=r$, this gives
\[\widehat{r}:(x,y)\mapsto r(x,y)\cdot(x,y)=\big(a(x)b(y)^{-1}x,\;a(x)b(y)^{-1}y\big).\]

Throughout, a twisted double $\left(\mathrm{D}^{k+1}\times\mathrm{S}^l\right)\cup_{\phi}\left(\mathrm{S}^{k}\times\mathrm{D}^{l+1}\right)$ is the closed manifold obtained by identifying $z\in\partial\left(\mathrm{D}^{k+1}\times\mathrm{S}^l\right)$ with $\phi(z)\in\partial\left(\mathrm{S}^{k}\times\mathrm{D}^{l+1}\right)$, for $\phi\in\mathrm{Diff}(\mathrm{S}^k\times\mathrm{S}^l)$. If $F$ and $H$ are diffeomorphisms of $\mathrm{D}^{k+1}\times\mathrm{S}^l$ and of $\mathrm{S}^{k}\times\mathrm{D}^{l+1}$, respectively (so that each restricts to a diffeomorphism of the boundary $\mathrm{S}^k\times\mathrm{S}^l$) then $F\sqcup H$ descends to a diffeomorphism
\begin{equation}\label{eq:regluing}
\left(\mathrm{D}^{k+1}\times\mathrm{S}^l\right)\cup_{\phi}\left(\mathrm{S}^{k}\times\mathrm{D}^{l+1}\right)\;\cong\;\left(\mathrm{D}^{k+1}\times\mathrm{S}^l\right)\cup_{H|_{\partial}\circ\phi\circ (F|_{\partial})^{-1}}\left(\mathrm{S}^{k}\times\mathrm{D}^{l+1}\right).
\end{equation}
In particular, isotopic gluing diffeomorphisms produce diffeomorphic twisted doubles. The next lemma, due to Sperança, identifies $\Sigma_r$; we spell out its proof in full, since the Mayer--Vietoris argument used in its proof will be needed to justify our definition of generalized logarithmic transformation.

\begin{lemma}[{\cite[Prop.~4 and Cor.~5]{speranca2016pulling}}]\label{lem:speranca-sigma-r}
Let
\[f_a(x,y)=(x,\,a(x)y),\qquad g_b(x,y)=(b(y)x,\,y)\]
be the one-sided clutching diffeomorphisms of $\mathrm{S}^k\times\mathrm{S}^l$ determined by $a$ and $b$. Then:
\begin{enumerate}
\item $\Sigma_r:=\left(\mathrm{D}^{k+1}\times\mathrm{S}^l\right) \cup_{\widehat r} \left(\mathrm{S}^{k}\times\mathrm{D}^{l+1}\right)$ is diffeomorphic to the twisted double
\[\left(\mathrm{D}^{k+1}\times\mathrm{S}^l\right) \cup_{g_b^{-1}\circ f_a} \left(\mathrm{S}^k\times\mathrm{D}^{l+1}\right),\]
whose diffeomorphism type depends only on the homotopy classes $[\Delta_2\circ a]\in\pi_k(\mathrm{SO}(l+1))$ and $[\Delta_1\circ b]\in\pi_l(\mathrm{SO}(k+1))$, where $\Delta_1:G\to\mathrm{SO}(k+1)$ and $\Delta_2:G\to\mathrm{SO}(l+1)$ are the homomorphisms given by the linear $G$-actions on $\mathrm{S}^k$ and $\mathrm{S}^l$.
\item $\Sigma_r$ is a homotopy $(k+l+1)$-sphere. In particular, $[\Sigma_r]\in\Theta^{k+l+1}$, the group of homotopy $(k+l+1)$-spheres up to diffeomorphism under connected sum.
\end{enumerate}
\end{lemma}
\begin{proof}
Set $\widehat a(x):=a(x)x$ and $\widehat b(y):=b(y)y$. The covariance condition $a(gx)=ga(x)g^{-1}$, evaluated at $g=a(x)$, gives
\begin{equation}\label{eq:hat-idempotence}
a(\widehat a(x)) = a(a(x)x)=a(x)a(x)a(x)^{-1}=a(x);
\end{equation}
evaluated at $g=a(x)^{-1}$, it gives $a(a(x)^{-1}x)=a(x)$ as well. Consequently $x\mapsto a(x)^{-1}x$ is a two-sided inverse of $\widehat a$, so $\widehat a$ is a diffeomorphism of $\mathrm{S}^k$ with $\widehat a^{-1}(x)=a(x)^{-1}x$; likewise $\widehat b$ is a diffeomorphism of $\mathrm{S}^l$ with $\widehat b^{-1}(y)=b(y)^{-1}y$. Both maps are smooth, being composites of smooth maps.

Write $A(x,y)=a(x)$, $B(x,y)=b(y)$, so that $\widehat A(x,y)=(a(x)x,\,a(x)y)$ and $\widehat B(x,y)=(b(y)x,\,b(y)y)$. Using \eqref{eq:hat-idempotence}:
\begin{align}
	f_a(\widehat a\times\mathrm{id})(x,y) &= f_a(a(x)x,y) = (a(x)x,\,a(\widehat a(x))y) = (a(x)x,\,a(x)y) = \widehat A(x,y) \nonumber\\
    &= (\widehat a\times\mathrm{id})f_a(x,y),
\end{align}
and analogously $g_b(\mathrm{id}\times\widehat b)=(\mathrm{id}\times\widehat b)g_b=\widehat B$; in particular $\widehat B^{-1}=g_b^{-1}(\mathrm{id}\times\widehat b^{-1})=(\mathrm{id}\times\widehat b^{-1})g_b^{-1}$. Next, $\widehat r=\widehat B^{-1}\circ\widehat A$: indeed, since $b(a(x)y)=a(x)b(y)a(x)^{-1}$ by covariance,
\begin{align}
\widehat B^{-1}\widehat A(x,y) &= \widehat B^{-1}(a(x)x,\,a(x)y) \nonumber\\
&= \big(b(a(x)y)^{-1}a(x)x,\; b(a(x)y)^{-1}a(x)y\big) \nonumber\\
&= \big(a(x)b(y)^{-1}x,\;a(x)b(y)^{-1}y\big) = \widehat r(x,y).
\end{align}
Combining the last two paragraphs,
 \begin{align}
 	\widehat r = \widehat B^{-1}\widehat A &= g_b^{-1}\circ(\mathrm{id}\times\widehat b^{-1})\circ f_a\circ (\widehat a\times\mathrm{id}) \nonumber \\
 	&= g_b^{-1}\circ f_a \circ (\mathrm{id}\times \widehat b^{-1})\circ (\widehat a\times\mathrm{id}) \nonumber \\
 	&= (\widehat a\times\mathrm{id})\circ g_b^{-1}f_a\circ (\mathrm{id}\times \widehat b^{-1}),
 \end{align}
where the second equality uses $(\mathrm{id}\times\widehat b^{-1})f_a = f_a(\mathrm{id}\times\widehat b^{-1})$ and the third uses that $(\widehat a\times\mathrm{id})$ commutes with $(\mathrm{id}\times\widehat b^{-1})$ (disjoint coordinates) and with $g_b^{-1}$: both $g_b^{-1}(\widehat a\times\mathrm{id})$ and $(\widehat a\times\mathrm{id})g_b^{-1}$ equal $(x,y)\mapsto(b(y)^{-1}a(x)x,\,y)$, again by covariance.

Since $F:=\mathrm{id}\times\widehat b$ and $H:=\widehat a\times\mathrm{id}$ are diffeomorphisms of $\mathrm{D}^{k+1}\times\mathrm{S}^l$ and of $\mathrm{S}^k\times\mathrm{D}^{l+1}$, respectively, the regluing principle \eqref{eq:regluing} applied to $\phi=g_b^{-1}f_a$ yields item (1); the dependence only on the homotopy classes follows from \eqref{eq:regluing}, since homotopies of $\Delta_2\circ a$ and $\Delta_1\circ b$ induce isotopies of $f_a$ and $g_b$.

For item (2), set $\psi:=g_b^{-1}f_a$ and compute the Mayer--Vietoris sequence of $\Sigma_r=A\cup_\psi B$ with $A=\mathrm{D}^{k+1}\times\mathrm{S}^l\simeq\mathrm{S}^l$ and $B=\mathrm{S}^k\times\mathrm{D}^{l+1}\simeq\mathrm{S}^k$. In degree $l$, the class $[\mathrm{pt}\times\mathrm{S}^l]$ maps to a generator of $\mathrm{H}_l(A)$ through the identity-side inclusion, so the relevant map is unimodular. In degree $k$, the map is multiplication by
\[P_{00}:=\deg\Big(\mathrm{pr}_1\circ\psi\big\vert_{\mathrm{S}^k\times\{y_0\}}\Big),\]
and $\mathrm{H}_*(\Sigma_r)\cong\mathrm{H}_*(\mathrm{S}^{k+l+1})$ if and only if $P_{00}=\pm 1$. When $k=l$ the two degrees combine into the single matrix $\bigl(\begin{smallmatrix}0&1\\ P_{00}&P_{01}\end{smallmatrix}\bigr)$ of determinant $-P_{00}$, and the same criterion results. Now, by covariance, $\psi(x,y_0)=\big(a(x)b(y_0)^{-1}a(x)^{-1}x,\;a(x)y_0\big)$, so $\mathrm{pr}_1\circ\psi\vert_{\mathrm{S}^k\times\{y_0\}}$ has the form $x\mapsto R(x)x$ with $R(x)=\Delta_1\big(a(x)b(y_0)^{-1}a(x)^{-1}\big)\in\mathrm{SO}(k+1)$. Since $k\geq 2$, the standard co-$H$ decomposition gives $\deg\big(x\mapsto R(x)x\big)=1+\deg\big(\mathrm{ev}_{x_0}\circ R\big)$ in $\pi_k(\mathrm{S}^k)\cong\mathbb{Z}$, where $\mathrm{ev}_{x_0}(A)=Ax_0$ for a fixed $x_0\in\mathrm{S}^k$. Here $R$ is nullhomotopic: since $\mathrm{SO}(k+1)$ is connected, choose a path $\gamma_t$ from $\Delta_1(b(y_0)^{-1})$ to the identity and set $R_t(x)=\Delta_1(a(x))\,\gamma_t\,\Delta_1(a(x))^{-1}$, a homotopy from $R$ to the constant map $\mathrm{id}$. Hence $\deg(\mathrm{ev}_{x_0}\circ R)=0$, so $P_{00}=1$ and $\Sigma_r$ is an integral homology $(k+l+1)$-sphere. We emphasize that this step uses covariance essentially: for an arbitrary gluing of the form $g_\beta^{-1}f_\alpha$ it can fail (Remark \ref{rem:milnor-caution}).

Finally, since $\mathrm{D}^{k+1}\times\mathrm{S}^l$, $\mathrm{S}^k\times\mathrm{D}^{l+1}$ and their overlap $\mathrm{S}^k\times\mathrm{S}^l$ are simply connected when $k,l\geq 2$, the Seifert--van Kampen theorem, applied to open thickenings of the two pieces, gives $\pi_1(\Sigma_r)=0$. Collapsing the complement of an open chart disk $\mathrm{D}^{k+l+1}\subset\Sigma_r$ produces a degree-one map $\Sigma_r\to\mathrm{S}^{k+l+1}$, which induces isomorphisms on all integral homology groups. The homological Whitehead theorem implies it is a homotopy equivalence. Hence $\Sigma_r$ is a homotopy $(k+l+1)$-sphere, and $[\Sigma_r]\in\Theta^{k+l+1}$. \qedhere
\end{proof}
   
In view of Lemma \ref{lem:speranca-sigma-r}, for covariant data $(a,b)$ we set \[\sigma(\Delta_2a,\Delta_1b):=[\Sigma_r]\in\Theta^{k+l+1}\] and, following \cite[Cor.~5]{speranca2016pulling}, call it the \emph{Milnor sphere} of the pair. By Lemma \ref{lem:speranca-sigma-r}(1), it depends only on the homotopy classes $[\Delta_2\circ a]$ and $[\Delta_1\circ b]$. Example \ref{ex:gm-revisited} below realizes the Gromoll--Meyer sphere $\Sigma^7_{GM}=M_{2,-1}$ in this form.  
\begin{remark}\label{rem:milnor-caution}
Covariance is essential in Lemma \ref{lem:speranca-sigma-r}(2). For arbitrary classes $\alpha\in\pi_k(\mathrm{SO}(l+1))$ and $\beta\in\pi_l(\mathrm{SO}(k+1))$, the twisted double glued by $g_\beta^{-1}\circ f_\alpha$, with $f_\alpha(x,y)=(x,\alpha(x)y)$ and $g_\beta(x,y)=(\beta(y)x,y)$, need not be a homotopy sphere. 

For instance, take $k=l=3$ and $\alpha=\beta=[t_{1,0}]$, the class of left quaternionic multiplication, with representatives $\alpha(x)=x$ and $\beta(y)=y$. We first record that these representatives are not covariant for the left-translation action of $G=\mathrm{S}^3$: covariance for $\alpha$ would require $\alpha(gx)=g\alpha(x)g^{-1}$, that is, $gx=gxg^{-1}$, which after cancelling $gx$ on the left forces $g=1$. The computation below shows more, namely that the pair $\big([t_{1,0}],[t_{1,0}]\big)$ admits \emph{no} covariant representatives at all, for any compact Lie group $G$ and any pair of linear actions $\Delta_1,\Delta_2$. Indeed, by the regluing principle \eqref{eq:regluing}, the diffeomorphism type of the twisted double glued by $g_\beta^{-1}\circ f_\alpha$ depends only on the homotopy classes $\alpha$ and $\beta$, and not on the group or the actions realizing them; were the pair realized by covariant data $(a,b)$ for some $(G,\Delta_1,\Delta_2)$, Lemma \ref{lem:speranca-sigma-r}(2) would force that twisted double to be a homotopy sphere, contradicting the computation below.

To evaluate the topological consequence of this failure, we compute the twisted double gluing map $\psi = g_\beta^{-1} \circ f_\alpha$. Setting $(u,v) = (\beta(y)x, y) = (yx, y)$ yields $v = y$ and $x = y^{-1}u$, so the inverse map is $g_\beta^{-1}(u,v) = (v^{-1}u, v)$. We evaluate the composition:
\begin{align}
    \psi(x,y) &= g_\beta^{-1}(x, xy) \\
              &= ((xy)^{-1}x, xy).
\end{align}
In the group of unit quaternions, the inverse is the conjugate, so $(xy)^{-1} = \overline{xy} = \bar{y}\bar{x}$. The first coordinate of $\psi(x,y)$ becomes:
\begin{equation}
    \overline{xy}x = (\bar{y}\bar{x})x = \bar{y}(\bar{x}x).
\end{equation}
Since $x \in \mathrm{S}^3$, we have $\bar{x}x = |x|^2 = 1$, leaving the first coordinate as $\bar{y}$. Thus, $\psi(x,y) = (\bar{y}, xy)$.

Restricting this map to the slice $\mathrm{S}^3 \times \{y_0\}$ for a fixed $y_0 \in \mathrm{S}^3$, the projection to the first coordinate is $\mathrm{pr}_1 \circ \psi(x, y_0) = \bar{y}_0$. This is a constant map from $\mathrm{S}^3$ to $\mathrm{S}^3$. Consequently, its topological degree is $P_{00} = 0$. 

Write $X:=\left(\mathrm{D}^4\times\mathrm{S}^3\right)\cup_{\psi}\left(\mathrm{S}^3\times\mathrm{D}^4\right)$ for the resulting twisted double. From the Mayer--Vietoris argument in the proof of Lemma \ref{lem:speranca-sigma-r}, the map $\mathrm{H}_3(\mathrm{S}^3\times\mathrm{S}^3)\rightarrow\mathrm{H}_3(\mathrm{D}^4\times\mathrm{S}^3)\oplus\mathrm{H}_3(\mathrm{S}^3\times\mathrm{D}^4)$ is given by the matrix $\bigl(\begin{smallmatrix}0&1\\ P_{00}&P_{01}\end{smallmatrix}\bigr)$, whose cokernel computes $\mathrm{H}_3(X)$ and whose kernel computes $\mathrm{H}_4(X)$. Because $P_{00}=0$, the image is the rank-one subgroup generated by the primitive vector $(1,P_{01})$, whence $\mathrm{H}_3(X)\cong\mathbb{Z}\neq 0$, and likewise $\mathrm{H}_4(X)\cong\mathbb{Z}$. The manifold $X$ is therefore not an integral homology sphere, and in particular not a homotopy sphere. Accordingly, the Milnor sphere $\sigma(\Delta_2a,\Delta_1b)$ is defined here only for pairs of classes admitting covariant representatives, which is the only situation arising in this paper. Finally, we note that in \cite{speranca2016pulling} the twisted doubles considered above are themselves called \emph{plumbings}; we reserve the latter term for the disk-bundle construction and do not employ it here.
\end{remark}

\begin{definition}[Product-Preserving Generalized Logarithmic Transformations]\label{def:generalized-log}
    Let $\Sigma_r\times\mathrm{S}^1$ be the manifold obtained via the twisted-double gluing: 
    \[\Sigma_r\times\mathrm{S}^1 := \left(\mathrm{D}^{k+1}\times\mathrm{S}^l\times\mathrm{S}^1\right) \cup_{\zeta} \left(\mathrm{S}^{k}\times\mathrm{D}^{l+1}\times\mathrm{S}^1\right)\]
where $\zeta := \widehat r \times \mathrm{id}_{\mathrm{S}^1}$, i.e. $\zeta(x,y,\theta) = \big(\widehat r(x,y),\, \theta\big)$ for $(x,y)\in\mathrm{S}^k\times\mathrm{S}^l$ and $\theta\in \mathrm{S}^1$. One says that a manifold $(\Sigma_r\times\mathrm{S}^1)'$ is obtained from $\Sigma_r\times\mathrm{S}^1$ via a \emph{product-preserving generalized logarithmic transformation} if 
\[(\Sigma_r\times\mathrm{S}^1)' \cong \left(\mathrm{D}^{k+1}\times\mathrm{S}^l\times\mathrm{S}^1\right) \cup_{\zeta\circ \phi} \left(\mathrm{S}^{k}\times\mathrm{D}^{l+1}\times\mathrm{S}^1\right)\]
for a boundary diffeomorphism $\phi$ of the product form
\[\phi = \psi \times \mathrm{id}_{\mathrm{S}^1},\]
where $\psi \in \mathrm{Diff}(\mathrm{S}^k \times \mathrm{S}^l)$ is \emph{admissible}, meaning that the twisted double
\[\Sigma_\psi := \left(\mathrm{D}^{k+1}\times\mathrm{S}^l\right) \cup_{\widehat r \circ \psi} \left(\mathrm{S}^{k}\times\mathrm{D}^{l+1}\right)\]
is again a homotopy $(k+l+1)$-sphere. 

For any such $\phi$, the gluing factors as $(\Sigma_r\times\mathrm{S}^1)'\cong\Sigma_\psi\times\mathrm{S}^1$, since $\zeta\circ\phi=(\widehat r\circ \psi)\times \mathrm{id}_{\mathrm{S}^1}$ and gluing commutes with taking the product with $\mathrm{S}^1$; admissibility then ensures that the factor $\Sigma_\psi$ is a homotopy sphere. Moreover, using the identity $\widehat r = (\widehat a\times\mathrm{id})\circ g_b^{-1}f_a\circ(\mathrm{id}\times\widehat b^{-1})$ established in the proof of Lemma \ref{lem:speranca-sigma-r}, together with the regluing principle \eqref{eq:regluing} crossed with $\mathrm{id}_{\mathrm{S}^1}$, the same manifold is obtained from the boundary map $(g_b^{-1}\circ f_a\circ \widetilde{\psi})\times\mathrm{id}_{\mathrm{S}^1}$, where $\widetilde{\psi} = (\mathrm{id}\times\widehat b^{-1})\circ\psi\circ(\mathrm{id}\times\widehat b)$: the factors $\widehat a\times\mathrm{id}$ and $\mathrm{id}\times\widehat b^{-1}$ are absorbed into diffeomorphisms of $\mathrm{S}^{k}\times\mathrm{D}^{l+1}$ and $\mathrm{D}^{k+1}\times\mathrm{S}^l$, respectively.
\end{definition}

\begin{remark}[Admissibility is a homological, and genuinely restrictive, condition]\label{rem:admissibility}
The Mayer--Vietoris analysis in the proof of Lemma \ref{lem:speranca-sigma-r} applies verbatim to the twisted double associated with an \emph{arbitrary} gluing diffeomorphism $\phi\in\mathrm{Diff}(\mathrm{S}^k\times\mathrm{S}^l)$: in degree $l$ the relevant map is unimodular through the identity-side inclusion, independently of $\phi$, while in degree $k$ it is multiplication by
\[P_{00}(\phi):=\deg\Big(\mathrm{pr}_1\circ\phi\big\vert_{\mathrm{S}^k\times\{y_0\}}\Big),\]
so that the twisted double glued by $\phi$ is an integral homology $(k+l+1)$-sphere if and only if $P_{00}(\phi)=\pm1$. Since $k,l\geq 2$, the Seifert--van Kampen argument of that proof gives simple connectivity regardless of $\phi$, and the collapse-map argument upgrades the homological conclusion to the homotopy-sphere property. Applying this to the diffeomorphism $\phi=\widehat r\circ\psi$, we conclude that $\psi$ is admissible in the sense of Definition \ref{def:generalized-log} if and only if
\[P_{00}(\widehat r\circ\psi)=\pm 1;\]
in particular, since the degree is a homotopy invariant, admissibility depends only on the isotopy class of $\psi$.

Admissibility is a genuine restriction. Take $k=l$ and the trivial covariant data $a\equiv b\equiv e$, so that $\widehat r=\mathrm{id}$ and $\Sigma_r=\mathrm{S}^{2k+1}$. For the coordinate swap $\psi(x,y)=(y,x)$, the restriction of $\mathrm{pr}_1\circ\psi$ to $\mathrm{S}^k\times\{y_0\}$ is the constant map $x\mapsto y_0$, so $P_{00}(\widehat r\circ\psi)=0$ and $\mathrm{H}_k(\Sigma_\psi)\cong\mathbb{Z}$. Indeed, relabeling the second piece by the swap identifies $\Sigma_\psi$ with two copies of $\mathrm{D}^{k+1}\times\mathrm{S}^k$ glued along their common boundary by the identity, whence
\begin{equation}\Sigma_\psi\;\cong\;\left(\mathrm{D}^{k+1}\cup_{\mathrm{S}^k}\mathrm{D}^{k+1}\right)\times\mathrm{S}^k\;=\;\mathrm{S}^{k+1}\times\mathrm{S}^k. \end{equation}
\end{remark}

\begin{remark}[Definition \ref{def:classical-log} vs \ref{def:generalized-log}]
Recall that, given a closed submanifold $K$ of codimension $c$ inside an ambient manifold $M$, with trivialized closed tubular neighborhood $\nu K \cong K \times \mathrm{D}^c$, one obtains a new manifold by excising the interior of $\nu K$ and regluing $K \times \mathrm{D}^c$ via a diffeomorphism onto $\partial\nu K$; in the 4-dimensional literature, this cut-and-paste operation is referred to as a \emph{surgery on $K$} (see \cite[Section 8.3]{gompf19994}), and we caution that it differs from the classical surgery of \cite[Chapter VI]{ko}, in which the excised $\mathrm{S}^p\times\mathrm{D}^q$ is replaced by the different piece $\mathrm{D}^{p+1}\times\mathrm{S}^{q-1}$. Smoothly, the cut-and-paste operation coincides with the (generalized) normal connected sum, or fiber sum, of $M$ with the manifold $Y = K \times \mathrm{S}^c$ along $K$. Indeed, decomposing the sphere $\mathrm{S}^c = \mathrm{D}^c_- \cup_{\mathrm{S}^{c-1}} \mathrm{D}^c_+$, we can write $Y = (K \times \mathrm{D}^c_-) \cup (K \times \mathrm{D}^c_+)$. Removing the tubular neighborhood $K \times \mathring{\mathrm{D}}^c_-$ from $Y$ leaves exactly $K \times \mathrm{D}^c_+$, which is then glued to $M \setminus \mathring{\nu}K$, recovering precisely the cut-and-paste operation just described.

In the classical $C^{\infty}$-logarithmic transformation, the ambient space is a 4-manifold and the surgery locus is a torus fiber $K = \mathrm{T}^2 = \mathrm{S}^1 \times \mathrm{S}^1$. The normal bundle is framed as $\mathrm{T}^2 \times \mathrm{D}^2$, implying codimension $c=2$. By the identification above, this operation is exactly a surgery on $\mathrm{S}^1 \times \mathrm{S}^1$ in the present sense; equivalently, it is the fiber sum of the 4-manifold with $\mathrm{T}^2 \times \mathrm{S}^2$ along $\mathrm{T}^2$ (see \cite[Section 8.3]{gompf19994}).

Our generalized procedure operates via the same mechanism in higher dimensions. The ambient manifold $\Sigma_r\times\mathrm{S}^1$ has dimension $k+l+2$ and is decomposed into $M_1 = \mathrm{D}^{k+1}\times\mathrm{S}^l\times\mathrm{S}^1$ and $M_2 = \mathrm{S}^{k}\times\mathrm{D}^{l+1}\times\mathrm{S}^1$. Within $M_2$, we identify the $(k+1)$-dimensional submanifold $K = \mathrm{S}^k \times \{0\} \times \mathrm{S}^1$. The piece $M_2$ serves precisely as the trivial tubular neighborhood of $K$, with its normal bundle framed by $\mathrm{D}^{l+1}$, giving a codimension of $c=l+1$. For the sake of completeness, let us check that explicitly.

For any point $p = (x,0,z) \in K$, the tangent space of the ambient manifold splits as $T_p M_2 \cong T_x \mathrm{S}^k \oplus \mathbb{R}^{l+1} \oplus T_z \mathrm{S}^1$, since $0$ is an interior point of $\mathrm{D}^{l+1}$. The tangent space of the submanifold is $T_p K \cong T_x \mathrm{S}^k \oplus \{0\} \oplus T_z \mathrm{S}^1$. The normal space is the quotient $N_p K = T_p M_2 / T_p K \cong \mathbb{R}^{l+1}$. Because this splitting is global, the normal bundle is the trivial bundle $\nu K \cong K \times \mathbb{R}^{l+1}$, giving a codimension of $l+1$. By the tubular neighborhood theorem \cite[Chapter~III]{ko}, a closed tubular neighborhood of $K$ is diffeomorphic to the closed unit disk bundle of $\nu K$, which is exactly $K \times \mathrm{D}^{l+1} = \mathrm{S}^k \times \mathrm{D}^{l+1} \times \mathrm{S}^1 = M_2$. The explicit Cartesian factor $\mathrm{D}^{l+1}$ serves as the canonical framing for this neighborhood.

Our proposed generalized logarithmic transformation excises the interior of $M_2$ and reglues it via the modified map $\zeta \circ (\psi \times \mathrm{id}_{\mathrm{S}^1})$. Consequently, this is exactly a surgery on the submanifold $K=\mathrm{S}^k \times\{0\}\times \mathrm{S}^1$ in the sense above. Equivalently, it is the fiber sum of $\Sigma_r \times \mathrm{S}^1$ with the manifold $(\mathrm{S}^k \times \mathrm{S}^1) \times \mathrm{S}^{l+1}$ along $\mathrm{S}^k \times \mathrm{S}^1$.
\end{remark}

\vspace{1em}

In the next example, we expand upon Example \ref{ex:gromollmeyer}, providing a clearer identification between the Gromoll--Meyer exotic sphere and generalized logarithmic transformations. This will serve as the starting point to connect our logarithmic transformations with $\star$-diagrams in a more general manner.

\begin{example}[Gromoll--Meyer, Sperança]\label{ex:gm-revisited}
    As in Example \ref{ex:gromollmeyer}, let $\mathrm{Sp}(2)$ be the set of $2\times 2$ quaternionic matrices respecting the identity $\bar Q^TQ=\mathrm{id}$, where $\bar Q^T$ is the conjugate transpose of $Q$. For $\mathrm{S}^4$ and $\mathrm{S}^7$, the unit spheres in $\mathbb R\times\mathbb H$ and $\mathbb H\times \mathbb H$, we define the Hopf map $h:\mathrm{S}^7\to \mathrm{S}^4$ as
\begin{equation}\label{h}
h(x,y)=(|x|^2-|y|^2, 2x\bar y).
\end{equation}
Consider the following commutative diagram (borrowed from \cite{rigas1996hopf}):
\begin{equation}\label{eq:chasing}
\begin{tikzcd}
\mathrm{Sp}(2) \arrow[dd, "\mathrm{p}_1"'] \arrow[rr, "\mathrm{p}_2"] &  & \mathrm{S}^7 \arrow[dd, "-h"] \\
& & \\
\mathrm{S}^7 \arrow[rr, "h"'] & & \mathrm{S}^4
\end{tikzcd}
\end{equation}
where $\mathrm{p}_i$ are the projections to the $i$-th column. Notice that the maps $\mathrm{p}_i:\mathrm{Sp}(2)\to \mathrm{S}^7$ are $\mathrm{S}^3$-principal bundles with principal actions given by right multiplication by the matrices $\mathrm{diag}(1,\bar q)$ and $\mathrm{diag}(\bar q, 1)$, respectively, where $q\in \mathrm{S}^3$, the group of unit quaternions.

Let $\mathrm{S}^4_{+}=\mathrm{S}^4 \setminus \{(-1,0)\}$ and $\mathrm{S}^4_-=\mathrm{S}^4 \setminus \{(1,0)\}$. Then, $\mathrm{S}^7$, viewed as the total space of a principal bundle over $\mathrm{S}^4$, is identified with the clutching presentation:
\[\left(\mathrm{S}^4_+\times \mathrm{S}^3\right) \cup_{f_a} \left(\mathrm{S}^4_-\times \mathrm{S}^3\right),\]
where the gluing is performed over the overlap $\mathrm{S}^4_+\cap\mathrm{S}^4_-$ by $f_a(\lambda,x,g)=(\lambda,x,\,g\,x/|x|)$, which is well defined there since $x\neq 0$. Recall from Example \ref{ex:gromollmeyer} that there is an $\mathrm{S}^3$-action on $\mathrm{Sp}(2)$ given by:
\begin{equation}\label{action Sp2} 
q \begin{pmatrix}a&c\\b&d\end{pmatrix} = \begin{pmatrix}qa\bar q&qc\\qb\bar q&qd\end{pmatrix}.
\end{equation} 
Next, we argue that the Gromoll--Meyer sphere $\Sigma^7_{GM}$ is identified with the Milnor bundle $M_{2,-1}$ and explain how this connects with generalized logarithmic transformations.

More precisely, we will prove that the quotient of $\mathrm{Sp}(2)$ by the action \eqref{action Sp2} is diffeomorphic to the twisted double:
\[\left(\mathrm{D}^4\times \mathrm{S}^3\right) \cup_{\beta_b^{-1} \circ \beta_a} \left(\mathrm{S}^3\times \mathrm{D}^4\right),\]
where the boundary diffeomorphisms $\beta_a,\beta_b:\mathrm{S}^3\times \mathrm{S}^3\rightarrow \mathrm{S}^3\times\mathrm{S}^3$ are defined by $\beta_a(x,y)=(x,xy\bar x)$ and $\beta_b(x,y)=(yx\bar y,y)$.\footnote{L. Sperança originally communicated this specific quotient construction to L.~F.~C.} These are precisely the one-sided clutching diffeomorphisms $f_a$ and $g_b$ of Lemma \ref{lem:speranca-sigma-r} for the covariant data $a(x)=x$, $b(y)=y$, with $G=\mathrm{S}^3$ acting on both sphere factors by conjugation; they are not to be confused with the bundle transition map $f_a$ above. In particular, Lemma \ref{lem:speranca-sigma-r}(2) shows that the twisted double glued by $\psi:=\beta_b^{-1}\circ\beta_a$ is a homotopy $7$-sphere, so that $\psi$ is admissible in the sense of Definition \ref{def:generalized-log}; by the classical Gromoll--Meyer computation recalled below, this twisted double is moreover a generator of $\Theta^7\cong \mathbb Z_{28}$. Consequently, $\Sigma^7_{GM}\times\mathrm{S}^1$ is obtained from $\mathrm{S}^7\times\mathrm{S}^1$, with $\mathrm{S}^7$ presented as the trivial twisted double $(\mathrm{D}^4\times\mathrm{S}^3)\cup_{\mathrm{id}}(\mathrm{S}^3\times\mathrm{D}^4)$, via the product-preserving generalized logarithmic transformation with boundary map $\phi=\psi\times\mathrm{id}_{\mathrm{S}^1}$.

\vspace{1em}

Chasing diagram \eqref{eq:chasing}, we conclude that
\begin{equation}\label{bundle Sp2}
\mathrm{Sp}(2) = \left(h^{-1}(\mathrm{S}^4_+)\times \mathrm{S}^3\right) \cup_{f_{ah}} \left(h^{-1}(\mathrm{S}^4_-)\times \mathrm{S}^3\right)
\end{equation}
as a principal bundle, where $f_{a h}(x,y,g)=\big(x,y,\,g\,x\bar y/|x\bar y|\big)$. Defining $a:\mathrm{S}^4_+\cap \mathrm{S}^4_-\to \mathrm{S}^3$ by $a(\lambda,x)=x/|x|$, the subindex $ah$ stands for the composition $a\circ h:h^{-1}(\mathrm{S}^4_+\cap \mathrm{S}^4_-)\to \mathrm{S}^3$, so that $ah(x,y)=x\bar y/|x\bar y|$; we henceforth write $ah:=a\circ h$. Notice that $a$ and $ah$ serve as transition maps for the bundle $h$ and for the presentation \eqref{bundle Sp2} of $\mathrm{p}_1$, respectively. Consider the following $\mathrm{S}^3$-action on $\mathrm{S}^7$:
\begin{equation}\label{action 7} 
q(x,y)=(qx\bar q,qy\bar q).
\end{equation}
Then, $\mathrm{p}_1(qQ)=q\mathrm{p}_1(Q)$. Furthermore, $a h(g(x,y))=ga h(x,y)g^{-1}$. In particular, the action \eqref{action Sp2}, under the identification \eqref{bundle Sp2}, is written as:
\[q(x,y,g)=(qx\bar q,qy\bar q,gq^{-1}).\]

\vspace{1em}

Notice that there is a diffeomorphism $F:\mathrm{Sp}(2)\rightarrow \mathrm{Sp}(2)$ fully characterized by the following diagram:
\begin{equation}
\begin{tikzcd}
	h^{-1}(\mathrm{S}^4_+) \times \mathrm{S}^3 \arrow[d, "F_+"]
	& h^{-1}(\mathrm{S}^4_+ \cap \mathrm{S}^4_-) \times \mathrm{S}^3 \arrow[l] \arrow[r, "f_{a h}"]
	& h^{-1}(\mathrm{S}^4_-) \times \mathrm{S}^3 \arrow[d, "F_-"]
	\\
	h^{-1}(\mathrm{S}^4_+) \times \mathrm{S}^3
	& h^{-1}(\mathrm{S}^4_+ \cap \mathrm{S}^4_-) \times \mathrm{S}^3 \arrow[l] \arrow[r, "{(\widehat{a h} \times \mathrm{id})f_{a h}^{-1}}"]
	& h^{-1}(\mathrm{S}^4_-) \times \mathrm{S}^3
\end{tikzcd}
\end{equation}
where $\widehat{ah}:h^{-1}(\mathrm{S}^4_+\cap \mathrm{S}^4_-)\to h^{-1}(\mathrm{S}^4_+\cap \mathrm{S}^4_-)$ is defined by $\widehat{ah}(x,y)=\left(ah(x,y)\right)(x,y)$ and $F_{\pm}(q(x,y,g)r^{-1})=r(F_{\pm}(x,y,g))q^{-1}$. Indeed, define $F_{\pm}$ by $(x,y,g)\mapsto (g(x,y),g^{-1})$, where $g(x,y)$ denotes the action \eqref{action 7}. Each $F_\pm$ is a smooth involution: applying it twice returns $\big(g^{-1}(g(x,y)),\,g\big)=(x,y,g)$. Moreover:
\[F_+^{-1} \circ f_{a h} \circ F_-(x,y,g) = F_+(g(x,y),g^{-1}a h(g(x,y))) = (a h(x,y)(x,y),ga h(x,y)^{-1})\]
\[F_{\pm}(q(x,y,g)r^{-1}) = F_{\pm}(q(x,y),rgq^{-1}) = (rg(x,y),qg^{-1}r^{-1}) = r(F_{\pm}(x,y,g))q^{-1},\]
using the elementary identity $(rgq^{-1})^{-1}=qg^{-1}r^{-1}$, and, in the first line, that $F_+$ is an involution, so $F_+^{-1}=F_+$. Since $f_{ah}^{-1}(x,y,g)=(x,y,\,g\,ah(x,y)^{-1})$, the first display identifies $F_+^{-1}\circ f_{ah}\circ F_-$ with $(\widehat{ah}\times\mathrm{id})\circ f_{ah}^{-1}$, as required by the diagram.

In the coordinates $(X,G):=F(x,y,g)=(g(x,y),g^{-1})$, the action \eqref{action Sp2} reads $q\cdot(X,G)=(X,\,qG)$, i.e., left translation on the last coordinate. Therefore, it defines an $\mathrm{S}^3$-principal bundle $\mathrm{p}_1':\mathrm{Sp}(2)\to h^{-1}(\mathrm{S}^4_-)\cup_{\widehat{ah}}h^{-1}(\mathrm{S}^4_+)$, where, in accordance with our gluing convention, $\widehat{ah}$ carries overlap points read on the $h^{-1}(\mathrm{S}^4_-)$-side to their identifications on the $h^{-1}(\mathrm{S}^4_+)$-side. We obtain the desired result by passing to compact charts. Fix $\rho=1/\sqrt{2}$ and set $A_\rho=\{(x,y)\in\mathrm{S}^7:|x|\leq\rho\}$, $B_\rho=\{(x,y)\in\mathrm{S}^7:|y|\leq\rho\}$, which meet along the generalized Clifford torus $|x|=|y|=\rho$, identified with $\mathrm{S}^3\times\mathrm{S}^3$. Let $\mathrm{D}^4_\rho$ denote the closed 4-disk of radius $\rho$. The maps
\[\Psi_\rho:\mathrm{D}^4_\rho\times \mathrm{S}^3\to A_\rho,\ (x,v)\mapsto (x,(1-|x|^2)^{1/2}v),\qquad \Phi_\rho:\mathrm{S}^3\times \mathrm{D}^4_\rho\to B_\rho,\ (u,y)\mapsto ((1-|y|^2)^{1/2}u,y),\]
are genuine diffeomorphisms, since $(1-|x|^2)^{1/2}\geq (1-\rho^2)^{1/2}>0$ on $\mathrm{D}^4_\rho$. Note that $\widehat{ah}$ preserves the torus $|x|=|y|=\rho$, since the action \eqref{action 7} preserves both norms. Restricting to the torus and conjugating by the chart diffeomorphisms gives the boundary map $\Phi_\rho^{-1}\circ \widehat{ah}\circ \Psi_\rho:\mathrm{S}^3\times\mathrm{S}^3\to\mathrm{S}^3\times\mathrm{S}^3$, which we now compute. On boundary points, $\Psi_\rho(\rho x,v)=(\rho x,\rho v)$ and $\Phi_\rho(u,\rho w)=(\rho u,\rho w)$, for unit quaternions $x,v,u,w$, since $(1-\rho^2)^{1/2}=\rho$. As $ah(\rho x,\rho v)=\rho^2x\bar v/|\rho^2 x\bar v|=x\bar v$, setting $g=x\bar v$ in \eqref{action 7} yields \[\widehat{ah}(\rho x,\rho v)=\big(\rho\,gx\bar g,\;\rho\,gv\bar g\big)=\big(\rho\, x\bar v\,x\,v\bar x,\;\rho\, x v\bar x\big),\] whence \[\Phi_\rho^{-1}\circ \widehat{ah}\circ \Psi_\rho(x,v)=\big(x\bar v\,x\,v\bar x,\; x v\bar x\big)=\beta_b^{-1}\circ\beta_a(x,v),\] since $\beta_b^{-1}(u,w)=(\bar w u w,\,w)$ gives $\beta_b^{-1}\beta_a(x,v)=\beta_b^{-1}(x,\,xv\bar x)=\big(x\bar v\bar x\cdot x\cdot xv\bar x,\;xv\bar x\big)=\big(x\bar v\,x\,v\bar x,\; xv\bar x\big)$. The resulting identification of the quotient of $\mathrm{Sp}(2)$ by the action \eqref{action Sp2} with $M_{2,-1}$, a generator of $\Theta^7\cong\mathbb{Z}_{28}$, is the classical Gromoll--Meyer computation \cite{gromoll1974exotic,speranca2016pulling}; the chart presentation above is what makes the explicit clutching map $\beta_b^{-1}\circ\beta_a$ available for the product-preserving logarithmic-transformation statement of this example.
\end{example}

\vspace{1em}

\subsection{Generalized logarithmic transformations and \texorpdfstring{$\star$}{star}-diagrams}

Throughout this section, we view $G$ as a $G$-manifold equipped with the conjugation action. Namely, the action of an element $g \in G$ on an element $h \in G$ is given by $g \cdot h = ghg^{-1}$.

\begin{definition}
    Given two $\star$-collections $\{\phi_{ij} : U_i \cap U_j \to G\}$ and $\{\phi'_{ij} : U_i \cap U_j \to G\}$, we say they are \emph{equivariantly homotopic} if they can be connected by a smooth 1-parameter family of equivariant $\star$-collections $\{\Phi_{ij, t}\}_{t \in [0,1]}$, such that the cocycle and covariance conditions hold for all $t$.
\end{definition}

\begin{lemma}[Speran\c{c}a,~\cite{speranca2012Phd}]\label{lem:speranca+bredon}
If $\{\phi_{ij}\}$ and $\{\phi'_{ij}\}$ are equivariantly homotopic $\star$-collections, then their associated $\star$-bundles are $G \times G$-equivariantly diffeomorphic.
\end{lemma}

\begin{proposition}[Speran\c{c}a,~\cite{speranca2012Phd}]\label{prop:DR}
Let $X$ be a smooth $G$-manifold where $G < \mathrm{Diff}(X)$ is a compact subgroup. Denote by $[X, G]^G$ the group of smooth homotopy classes of $G$-equivariant maps from $X$ to $G$.
\begin{enumerate}
    \item There is a well-defined group homomorphism 
\[DR: [X,G]^G \rightarrow \pi_0(\mathrm{Diff}(X)^G)\]
where $\mathrm{Diff}(X)^G$ is the group of $G$-equivariant self-diffeomorphisms of $X$. This homomorphism is defined by $[\alpha] \mapsto \big[\,\widehat{\alpha'}^{\,-1}\big]$, where $\alpha'$ is a smooth $G$-equivariant approximation of $\alpha$, and $\widehat{\alpha'}$ is the diffeomorphism defined by $\widehat{\alpha'}(x) := \alpha'(x)\,x$.
\item If $M'\leftarrow P\rightarrow M$ is a $\star$-diagram and $N$ is a $G$-manifold, then for any class $[f]\in [N,M]^G$, there is a well-defined map
\[[N,M]^G\rightarrow \mathcal{M}^G\]
which maps $[f]$ to the equivariant diffeomorphism class $[f^*P/\star]$, where $\mathcal{M}^G$ denotes the set of classes of $G$-equivariant manifolds.
\end{enumerate}
\end{proposition}

\begin{proposition}\label{prop:admissible-star}
Let $\Sigma_r$ be the twisted double glued by $\widehat{r}$, built from covariant data $(a,b)$ as above, and set
\[L_\star(\mathrm{S}^k\times\mathrm{S}^l) := \mathrm{Im}(DR) \subseteq \pi_0\big(\mathrm{Diff}(\mathrm{S}^k\times\mathrm{S}^l)^G\big),\]
where $\mathrm{S}^k\times\mathrm{S}^l$ carries the diagonal $G$-action determined by $\Delta_1$ and $\Delta_2$. Assume that
\begin{enumerate}
\item[\textup{(i)}] $k\neq l$, or
\item[\textup{(ii)}] the $\Delta_1$-action of $G$ on $\mathrm{S}^k$ has a fixed point.
\end{enumerate}
Then every class in $L_\star(\mathrm{S}^k\times\mathrm{S}^l)$ consists of admissible boundary diffeomorphisms in the sense of Definition \ref{def:generalized-log}: for any representative $\psi=DR([\alpha])$, the twisted double $\Sigma_\psi = \left(\mathrm{D}^{k+1}\times\mathrm{S}^l\right) \cup_{\widehat{r}\circ\psi} \left(\mathrm{S}^k\times\mathrm{D}^{l+1}\right)$ is a homotopy $(k+l+1)$-sphere, whose diffeomorphism class depends only on the equivariant homotopy class $[\alpha]$.

Moreover, let $P \to \Sigma_r$ be the principal $G$-bundle with transition function $\beta:=(\alpha')^{-1}$ on the overlap of the two charts of $\Sigma_r$, where $\alpha'$ is a smooth equivariant approximation of $\alpha$; then $\Sigma_\psi=P/\star$, and the product-preserving generalized logarithmic transformation on $\Sigma_r \times \mathrm{S}^1$ with boundary map $\phi = \psi \times \mathrm{id}_{\mathrm{S}^1}$ produces exactly the $\star$-quotient of the pullback bundle $\mathrm{pr}_1^* P \cong P \times \mathrm{S}^1$, fitting into the $\star$-diagram
\begin{equation}
\begin{xy}\xymatrix{& G\ar@{..}[d]^{\bullet} & \\ G\ar@{..}[r]^{\star} & \mathrm{pr}_1^* P \ar[d]^{\Pi}\ar[r]^{\Pi'} & \Sigma_\psi \times \mathrm{S}^1\\ & \Sigma_r \times \mathrm{S}^1 &}\end{xy}
\end{equation}
where $\Pi = \pi \times \mathrm{id}_{\mathrm{S}^1}$, $\Pi' = \pi' \times \mathrm{id}_{\mathrm{S}^1}$, and $\pi: P \to \Sigma_r$, $\pi': P \to \Sigma_\psi$ are the two quotient projections.
\end{proposition}
\begin{proof}
Write $\psi=\widehat{\alpha'}^{\,-1}$ and $\beta:=(\alpha')^{-1}$, so that $\beta$ is equivariant and, by the idempotence identity \eqref{eq:hat-idempotence} applied to $\alpha'$, $\widehat\beta=\widehat{\alpha'}^{\,-1}=\psi$. Cover $\Sigma_r$ by the two $G$-invariant open sets $U_1,U_2$ obtained by slightly thickening the pieces $\mathrm{D}^{k+1}\times\mathrm{S}^l$ and $\mathrm{S}^k\times\mathrm{D}^{l+1}$, so that $U_1\cap U_2$ is an invariant collar of the common boundary $\mathrm{S}^k\times\mathrm{S}^l$ and deformation retracts $G$-equivariantly onto it; extend $\beta$ to $U_1\cap U_2$ by composing with this retraction, without change of notation. Since $\beta$ is equivariant for the diagonal action on $\mathrm{S}^k\times\mathrm{S}^l$ and the conjugation action on $G$, it is a $\star$-collection for the cover $\{U_1,U_2\}$. Let $P\to\Sigma_r$ be the associated principal $G$-bundle, glued over the common boundary by $(z,q)\mapsto(\widehat r(z),\,q\,\beta(z))$, with $\bullet$-action $g\bullet(z,q)=(z,gq)$ and $\star$-action $s\star(z,q)=(s\cdot z,\,q\,s^{-1})$: the $\star$-action is free, since $q\,s^{-1}=q$ forces $s=e$, and it commutes with $\bullet$ because left and right translations of $G$ commute (the same mechanism reappears in Proposition \ref{prop:pullback-star}(1) below).

The $\star$-quotient is computed by the slice mechanism (employed again in Proposition \ref{prop:pullback-star}(2) below): each chart quotient is identified with $U_i$ via $z\mapsto[(z,1)]$, the gluing carries $(z,1)$ to $\big(\widehat r(z),\beta(z)\big)$, and the unique slice-returning element is $s=\beta(z)$, so the quotient gluing map is $z\mapsto\beta(z)\cdot\widehat r(z)$. This map is exactly $\widehat r\circ\psi$: by the equivariance of $r$,
\[
\widehat r\big(\widehat\beta(z)\big)=r\big(\beta(z)\cdot z\big)\cdot\beta(z)\cdot z=\beta(z)\,r(z)\,\beta(z)^{-1}\,\beta(z)\cdot z=\beta(z)\cdot\widehat r(z).
\]
Hence $P/\star=\Sigma_\psi$, and Lemma \ref{lem:speranca+bredon} shows that its equivariant diffeomorphism class depends only on $[\alpha]$.

We now verify admissibility. Set $c(z):=\beta(z)\,r(z)$, an equivariant map, so that $\widehat r\circ\psi=\widehat c$ by the displayed identity. By Remark \ref{rem:admissibility}, it suffices to show $P_{00}(\widehat c)=\pm1$.

Case \textup{(i)}: $k\neq l$. By the Künneth formula, $\mathrm{H}_k(\mathrm{S}^k\times\mathrm{S}^l;\mathbb{Z})\cong\mathbb{Z}$, generated by $\mu:=[\mathrm{S}^k\times\mathrm{pt}]$, and $\mathrm{pr}_{1*}:\mathrm{H}_k(\mathrm{S}^k\times\mathrm{S}^l)\to \mathrm{H}_k(\mathrm{S}^k)$ is an isomorphism carrying $\mu$ to the fundamental class. Let $\phi$ be any diffeomorphism of $\mathrm{S}^k\times\mathrm{S}^l$. Then $\phi_*$ is an automorphism of the infinite cyclic group $\mathbb{Z}\mu$, so $\phi_*\mu=\varepsilon\,\mu$ with $\varepsilon=\pm1$. On the other hand, $\mathrm{pr}_{1*}\phi_*\mu=\mathrm{pr}_{1*}\big(\phi_*[\mathrm{S}^k\times\mathrm{pt}]\big)$ is, by definition, $P_{00}(\phi)$ times the fundamental class of $\mathrm{S}^k$. Comparing, $P_{00}(\phi)=\varepsilon=\pm1$. In particular, in this case \emph{every} diffeomorphism of $\mathrm{S}^k\times\mathrm{S}^l$ is admissible, and the hypothesis $[\psi]\in L_\star$ is not needed.

Case \textup{(ii)}: let $x_0\in\mathrm{S}^k$ be a point fixed by $\Delta_1(G)$. Fix $y_0\in\mathrm{S}^l$ and put $R:=\Delta_1\circ c(\cdot,y_0):\mathrm{S}^k\to\mathrm{SO}(k+1)$, so that \[\mathrm{pr}_1\circ\widehat c\,\vert_{\mathrm{S}^k\times\{y_0\}}\;:\;x\longmapsto R(x)\,x .\] For any $p\in\mathrm{S}^k$ write $\mathrm{ev}_p:\mathrm{SO}(k+1)\to\mathrm{S}^k$, $\mathrm{ev}_p(A)=Ap$. Since $k\geq 2$, the standard co-$H$ decomposition of $x\mapsto R(x)x$ gives, in $\pi_k(\mathrm{S}^k)\cong\mathbb{Z}$, \[ \deg\big(x\mapsto R(x)x\big)=1+\deg\big(\mathrm{ev}_{p}\circ R\big), \] and the right-hand side does not depend on $p$: if $A_0\in\mathrm{SO}(k+1)$ satisfies $A_0p=p'$, then $\mathrm{ev}_{p'}=\mathrm{ev}_{p}\circ(\,\cdot\,A_0)$, and right translation by $A_0$ is homotopic to the identity because $\mathrm{SO}(k+1)$ is connected. Taking $p=x_0$, the composite $\mathrm{ev}_{x_0}\circ R=\Delta_1(c(\cdot,y_0))\,x_0$ is the constant map $x\mapsto x_0$, because $x_0$ is fixed by every element of $\Delta_1(G)$. Hence $\deg(\mathrm{ev}_{x_0}\circ R)=0$ and $P_{00}(\widehat c)=1$.  In either case, $P_{00}(\widehat c)=\pm 1$, so $\Sigma_\psi$ is a homotopy $(k+l+1)$-sphere by Remark \ref{rem:admissibility}.

Finally, by Proposition \ref{prop:howtopullback}, the pullback $\mathrm{pr}_1^*P\to\Sigma_r\times\mathrm{S}^1$ is a $\star$-bundle with transition function $\Phi:=\beta\circ\mathrm{pr}_1$ on the overlap $(\mathrm{S}^k\times\mathrm{S}^l)\times\mathrm{S}^1$, the $G$-action on $\Sigma_r\times\mathrm{S}^1$ being trivial on the circle coordinate. The argument above applied verbatim computes the $\star$-quotient's gluing map at $(z,\theta)$ as
\[
\Phi(z,\theta)\cdot\big(\widehat r(z),\theta\big)=\big(\beta(z)\cdot\widehat r(z),\,\theta\big)=\big((\widehat r\circ\psi)(z),\,\theta\big),
\]
i.e., $(\widehat r\circ\psi)\times\mathrm{id}_{\mathrm{S}^1}$. This is precisely the boundary map $\zeta\circ(\psi\times\mathrm{id}_{\mathrm{S}^1})$ of Definition \ref{def:generalized-log}, so $\mathrm{pr}_1^*P/\star\cong\Sigma_\psi\times\mathrm{S}^1$.
\end{proof}

\begin{remark}\label{rem:L-star-caution}
Hypothesis \textup{(ii)} cannot be dropped when $k=l$. Take $k=l=3$, $G=\mathrm{S}^3$ acting on both factors by left translation, and $a\equiv b\equiv e$, so that $\widehat r=\mathrm{id}$ and $\Sigma_r=\mathrm{S}^7$. The map $\alpha'(x,y)=x\bar y$ is smooth and equivariant, and
\[\psi:=DR([\alpha'])=\widehat{\alpha'}^{\,-1}\colon (x,y)\mapsto(y,\;y\bar x y)\]
belongs to $L_\star(\mathrm{S}^3\times\mathrm{S}^3)$. Yet $\mathrm{pr}_1\circ\psi$ is constant on each slice $\mathrm{S}^3\times\{y_0\}$, so $P_{00}(\psi)=0$ and, by Remark \ref{rem:admissibility}, the twisted double glued by $\psi$ has $\mathrm{H}_3\cong\mathbb{Z}$: this $\psi$ is not admissible. The conjugation actions used throughout this paper fix $\pm1\in\mathrm{S}^3$, so hypothesis \textup{(ii)} holds in all our applications.
\end{remark}

\vspace{1em}

\subsection{Spherical T-dualities, \texorpdfstring{$\star$}{star}-diagrams, and logarithmic transformations}
\label{sec:tviastar}

Fix once and for all the following chart presentation of the 4-sphere: $\mathrm{S}^4=\mathrm{D}^4_+\cup_{\mathrm{S}^3}\mathrm{D}^4_-$, where $\mathrm{D}^4_\pm$ are two copies of the closed unit disk in $\mathbb{H}$ glued along the equatorial sphere $\mathrm{S}^3=\partial\mathrm{D}^4_\pm$, and let $G\cong\mathrm{S}^3$ act on each chart by conjugation, $s\cdot x=sx\bar s$. This action is compatible with the gluing and hence defines a smooth $G$-action on $\mathrm{S}^4$. Revisiting the discussion in Section \ref{sec:milnorbundles}, in this presentation, the Milnor bundle $\pi_{m,n}:M_{m,n}\to\mathrm{S}^4$ is the twisted double (we extend the conventions of \eqref{eq:regluing} verbatim to gluings of two copies of $\mathrm{D}^4\times\mathrm{S}^3$, the identification always carrying boundary points of the first-listed piece to the second)
\begin{equation}\label{eq:milnor-twisted-double}
M_{m,n}=\left(\mathrm{D}^4_+\times\mathrm{S}^3\right)\cup_{f_{m,n}}\left(\mathrm{D}^4_-\times\mathrm{S}^3\right),\qquad f_{m,n}(x,v)=\big(x,\,t_{m,n}(x)v\big)=(x,\,x^m v x^n),
\end{equation}
where $t_{m,n}:\mathrm{S}^3\to\mathrm{SO}(4)$, $t_{m,n}(x)v=x^mvx^n$, are Milnor's representatives of $\pi_3(\mathrm{SO}(4))\cong\mathbb{Z}\oplus\mathbb{Z}$ \cite{mi}, $f_{m,n}$ is the clutching diffeomorphism of $\mathrm{S}^3\times\mathrm{S}^3$ determined by $t_{m,n}$, and the projection $\pi_{m,n}$ is given chartwise by the first-coordinate projection. In the terminology of \cite{SperancaCavenaghiPublished}, $t_{m,n}$ is the unique transition function of $\pi_{m,n}$ associated to the cover $\{\mathrm{D}^4_\pm\}$.

We endow $M_{m,n}$ with the $G$-action defined chartwise by
\begin{equation}\label{eq:actionPr}
s \cdot (x,v) = (sx\bar s,\; sv\bar s).
\end{equation}
The chartwise formula defines a global action because $f_{m,n}$ is equivariant for it. Equivalently, $t_{m,n}$ satisfies the covariance condition
\begin{align}
t_{m,n}(sx\bar s)(sv\bar s)=(sx\bar s)^m (sv\bar s) (sx\bar s)^n &= s x^m \bar s\, s v \bar s\, s x^n \bar s \nonumber\\
&= s\big(t_{m,n}(x)v\big)\bar s.
\end{align}
In particular, $\pi_{m,n}$ is equivariant with respect to \eqref{eq:actionPr} and the conjugation action on $\mathrm{S}^4$. Note that \eqref{eq:actionPr} is never free: the central element $-1\in\mathrm{S}^3$ acts trivially.

By Lemma \ref{lem:principal_s3_condition}, $M_{m,n}$ is principal precisely when $m=0$ or $n=0$: the bundles $M_{0,n}$ are principal for the left translation action, their clutching being right multiplication, while the bundles $M_{m,0}$ are principal for the right translation action. The two families are related by quaternionic conjugation of the fiber, $M_{0,n}\cong M_{-n,0}$, an isomorphism of unoriented bundles which reverses the fiber orientation and hence the sign of the Euler class; we keep them notationally distinct, and use $M_{m,0}$ in Theorem \ref{thm:equivalence} and $M_{0,k}$ as the auxiliary family here. Accordingly, fix $k\in\mathbb{Z}$ and consider the principal Milnor bundle $P_k := M_{0,k}$, with clutching diffeomorphism $f_{0,k}(x,q)=(x,\,qx^{k})$. Its principal $\bullet$-action is given chartwise by left translation,
\[
g\bullet(x,q)=(x,\,gq),
\]
globally well defined since left and right translations of $\mathrm{S}^3$ commute. The transition function
\[
\phi(x):=x^{k},\qquad x\in\mathrm{S}^3,
\]
satisfies the covariance condition $\phi(sx\bar s)=(sx\bar s)^{k}=s\,\phi(x)\,s^{-1}$ and therefore constitutes a $\star$-collection for $P_k$ with respect to the conjugation action on $\mathrm{S}^4$. Its associated $\star$-action is given chartwise by
\begin{equation}\label{eq:actionPk}
s \star (x,q) = (sx\bar s,\; q\bar s),
\end{equation}
which is free ($q\bar s=q$ forces $s=1$), commutes with $\bullet$, and is global by the covariance of $\phi$: indeed, $(q\bar s)(sx\bar s)^k = q \bar s\, s x^k \bar s = (q x^k)\bar s$, so $f_{0,k}$ is equivariant for \eqref{eq:actionPk}. Likewise, a principal base $P_r=M_{0,r}$ carries two such structures: the non-free conjugation action \eqref{eq:actionPr}, $s\cdot(x,q)=(sx\bar s,\,sq\bar s)$, and the free action
\begin{equation}\label{eq:free-base}
s \cdot (x,q) = (sx\bar s,\; q\bar s),
\end{equation}
which is \eqref{eq:actionPk} with $k$ replaced by $r$. Whenever both are in play, the equation label indicates which action on $P_r$ is in force.

Recall the hat operator $\widehat{\Phi}(z)=\Phi(z)\cdot z$ of \eqref{eq:hat-operator}; below it is taken with respect to the conjugation action \eqref{eq:actionPr} in items (1)--(2), and with respect to the free action \eqref{eq:free-base} in item (3). The following proposition carries out the pullback-and-quotient construction of \cite{SperancaCavenaghiPublished} in both settings; in each case, the clutching diffeomorphism of the $\star$-quotient is computed through $\widehat{\Phi}$.

\begin{proposition}[Pullback $\star$-diagrams between Milnor bundles]\label{prop:pullback-star}
Let $k\in\mathbb{Z}$ and let $P_k=M_{0,k}$ be as above.
\begin{enumerate}
\item For every $m,n\in\mathbb{Z}$, let $Q:=\pi_{m,n}^{*}P_k$ be the pullback of $P_k$ along $\pi_{m,n}$, an $\mathrm{S}^3$-principal bundle over $M_{m,n}$ whose $\bullet$-action is induced by that of $P_k$. The pulled-back collection $\Phi:=\phi\circ\pi_{m,n}$ is a $\star$-collection relative to the conjugation action \eqref{eq:actionPr} and endows $Q$ with a free $\star$-action commuting with $\bullet$, given chartwise over $M_{m,n}\vert_{\mathrm{D}^4_\pm}$ by
\[
s\star(z,\,q)=\big(s\cdot z,\; q\bar s\big),\qquad z\in M_{m,n}\vert_{\mathrm{D}^4_\pm},\ q\in\mathrm{S}^3.
\]
In particular, $Q$ is an $\mathrm{S}^3$-$\mathrm{S}^3$-bundle.
\item The $\star$-quotient of $Q$ is the twisted double obtained from \eqref{eq:milnor-twisted-double} by composing the clutching diffeomorphism with $\widehat{\Phi}$. Explicitly,
\[
\widehat{\Phi}\circ f_{m,n}=f_{m+k,\,n-k},\qquad\text{whence}\qquad Q/\star\;\cong\;M_{m+k,\,n-k},
\]
and there is a $\star$-diagram
\begin{equation}\label{eq:first-pulling-back} 
M_{m+k,n-k}\;\longleftarrow\;\pi_{m,n}^*P_k\;\longrightarrow\; M_{m,n}.
\end{equation}
In particular, since $\mathrm{e}(M_{m,n})=(m+n)u$ (Section \ref{sec:milnorbundles}), the construction preserves the Euler class: $(m+k)+(n-k)=m+n$.
\item Suppose the base is principal, $P_r=M_{0,r}$, endowed with the free action \eqref{eq:free-base}. The same collection $\Phi=\phi\circ\pi_{r}$, where $\pi_r:=\pi_{0,r}$, is a $\star$-collection relative to \eqref{eq:free-base} and endows $Q_r:=\pi_{r}^{*}P_k$ with a second free $\star$-action, given chartwise by
\[
s\star(x,q,q_k)=\big(sx\bar s,\;q\bar s,\;q_k\bar s\big).
\]
Its quotient clutching is $\widehat{\Phi}\circ f_{0,r}=f_{0,\,r-k}$, whence $Q_r/\star\cong P_{r-k}$, and there is a $\star$-diagram
\begin{equation}\label{eq:principal-pulling-back}
P_{r-k}\;\longleftarrow\;\pi_{r}^*P_k\;\longrightarrow\; P_r.
\end{equation}
Here the Euler class shifts: $\mathrm{e}(P_r)=ru$, while $\mathrm{e}(P_{r-k})=(r-k)u$.
\item For the diagram \eqref{eq:principal-pulling-back}, the combined $G\times G$-action $(h,s)\cdot z:=h\bullet(s\star z)$ is free. We call $\star$-diagrams with this property \emph{bifree}. By contrast, for all $m,n\in\mathbb{Z}$ the diagram \eqref{eq:first-pulling-back} is not bifree: the element $(-1,-1)\in G\times G$ acts trivially on all of $Q$.
\end{enumerate}
\end{proposition}
\begin{proof}
(1) Each chart $Q\vert_\pm:=M_{m,n}\vert_{\mathrm{D}^4_\pm}\times\mathrm{S}^3\subset Q$ is invariant under the chartwise formula, and the only transition function of $Q$ associated to the invariant cover $\{Q\vert_\pm\}$ is $\Phi=\phi\circ\pi_{m,n}$ restricted to the equatorial locus: the gluing of $Q$ reads
\[
(z,\,q)\;\longmapsto\;\big(f_{m,n}(z),\,q\,\Phi(z)\big),\qquad \Phi(z)=x^{k}\ \text{ for } z=(x,v).
\]
Since $\pi_{m,n}$ is equivariant and $\phi$ is covariant, $\Phi$ inherits the covariance condition $\Phi(s\cdot z)=s\,\Phi(z)\,s^{-1}$, so $\Phi$ is a $\star$-collection for $Q$ relative to \eqref{eq:actionPr} (Proposition \ref{prop:howtopullback}). The chartwise formula is compatible with the gluing:
\[
\big(f_{m,n}(s\cdot z),\;q\bar s\,\Phi(s\cdot z)\big)=\big(s\cdot f_{m,n}(z),\;q\bar s\,s\,\Phi(z)\,\bar s\big)=\big(s\cdot f_{m,n}(z),\;q\,\Phi(z)\,\bar s\big),
\]
which is the image of $\big(f_{m,n}(z),\,q\Phi(z)\big)$ under $s\,\star$. Freeness is inherited from the $P_k$-coordinate ($q\bar s=q$ forces $s=1$), and $\bullet$ and $\star$ commute because $\bullet$ acts by left translation on the $P_k$-coordinate only, while left and right translations of $\mathrm{S}^3$ commute.

(2) Over each chart, every $\star$-orbit meets the slice $S_\pm:=\{q=1\}\subset Q\vert_\pm$ exactly once, since $q\bar s=1$ holds precisely for $s=q$. Hence $z\mapsto[(z,1)]$ defines diffeomorphisms $M_{m,n}\vert_{\mathrm{D}^4_\pm}\to \big(Q\vert_{\pm}\big)/\star$, with smooth inverse induced by the $\star$-invariant map $(z,q)\mapsto q\cdot z$, and the quotient manifold is the twisted double $\big(\mathrm{D}^4_+\times\mathrm{S}^3\big)\cup_{\psi}\big(\mathrm{D}^4_-\times\mathrm{S}^3\big)$, where $\psi$ is computed by following the gluing of $Q$ and returning to the slice. For $z=(x,v)$ with $x\in\mathrm{S}^3$, the gluing carries $(z,1)_+$ to $\big(f_{m,n}(z),\,x^{k}\big)_-$, and the unique slice-returning element is $s=x^{k}$:
\[
x^k \star \big(f_{m,n}(z),\,x^k\big) = \big(x^{k}\cdot f_{m,n}(z),\;x^{k}\,\overline{x^{k}}\big)
= \big(\widehat{\Phi}(f_{m,n}(z)),\,1\big),
\]
so $\psi=\widehat{\Phi}\circ f_{m,n}$. A straightforward computation identifies $\psi$:
\[
\widehat{\Phi}(x,w)=x^{k}\cdot(x,w)=\big(x^{k}x\,x^{-k},\;x^{k}w\,x^{-k}\big)=(x,\;x^{k}w\,x^{-k}),
\]
whence $\psi(x,v)=\widehat{\Phi}(x,\,x^m v x^n)=(x,\;x^{m+k}v\,x^{n-k})=f_{m+k,n-k}(x,v)$ and $Q/\star\cong M_{m+k,n-k}$. Since $\bullet$ and $\star$ are free and commute, with $Q/\bullet=M_{m,n}$ and $Q/\star\cong M_{m+k,n-k}$, this is precisely the datum of the $\star$-diagram \eqref{eq:first-pulling-back}.

(3) The collection $\Phi$ factors through the base coordinate, and both \eqref{eq:actionPr} and \eqref{eq:free-base} transform that coordinate by conjugation. Hence, $\Phi\big(s\cdot(x,q)\big)=(sx\bar s)^k=s\,\Phi(x,q)\,s^{-1}$, and $\Phi$ is a $\star$-collection relative to \eqref{eq:free-base} as well. Compatibility with the gluing $(x,q,q_k)\mapsto(x,\,qx^r,\,q_k x^k)$ of $Q_r$ is checked directly:
\[
\big(sx\bar s,\;(q\bar s)(sx\bar s)^r,\;(q_k\bar s)(sx\bar s)^k\big)=\big(sx\bar s,\;qx^r\bar s,\;q_k x^k\bar s\big),
\]
which is the image of $(x,\,qx^r,\,q_k x^k)$ under $s\,\star$. Freeness is again inherited from the $P_k$-coordinate. On the slice $\{q_k=1\}$, the gluing carries $(x,q,1)$ to $(x,\,qx^r,\,x^k)$, and the slice-returning element $s=x^k$ gives
\[
x^k\star\big(x,\,qx^r,\,x^k\big)=\big(x,\;q\,x^{r}x^{-k},\;1\big)=\big(x,\;q\,x^{\,r-k},\;1\big),
\]
so $\psi(x,q)=(x,\,q\,x^{\,r-k})=f_{0,r-k}(x,q)$; equivalently, $\widehat{\Phi}(x,q)=x^k\cdot(x,q)=(x,\,q\,x^{-k})$ and $\psi=\widehat{\Phi}\circ f_{0,r}$. Hence $Q_r/\star\cong P_{r-k}$, and \eqref{eq:principal-pulling-back} follows as in (2).

(4) For \eqref{eq:principal-pulling-back}, the combined action reads $(h,s)\cdot(x,q,q_k)=(sx\bar s,\,q\bar s,\,h\,q_k\bar s)$. A fixed point forces $q\bar s=q$, hence $s=1$, and then $h\,q_k=q_k$, hence $h=1$. For \eqref{eq:first-pulling-back}, the combined action reads $(h,s)\cdot(z,q_k)=(s\cdot z,\,h\,q_k\bar s)$. Taking $(h,s)=(-1,-1)$, conjugation by $-1$ is the identity of $\mathbb{H}$, so $s\cdot z=z$, and $h\,q_k\bar s=(-1)q_k(-1)=q_k$. Thus $(-1,-1)$ acts trivially.
\end{proof}

The two constructions of Proposition \ref{prop:pullback-star} differ in exactly one structural respect, isolated in item (4). The next lemma shows that bifreeness characterizes fiber products of principal bundles among $\star$-diagrams.

\begin{lemma}[Bifree $\star$-diagrams are fiber products]\label{lem:bifree}
Let $M'\xleftarrow{\ \pi'\ } Q\xrightarrow{\ \pi\ } M$ be a $\star$-diagram with $G$ compact and $Q$ closed, and suppose that the diagram is bifree. Then $B:=Q/(G\times G)$ is a smooth closed manifold, the residual actions make $\rho:M\to B$ and $\rho':M'\to B$ principal $G$-bundles, and the map
\[
\Theta:\;z\;\longmapsto\;\big(\pi'(z),\,\pi(z)\big)
\]
is a $G\times G$-equivariant diffeomorphism $Q\to M'\times_B M$ onto the canonical fiber product, intertwining $\bullet$ with the principal action on the $M'$-factor and $\star$ with that on the $M$-factor.
\end{lemma}
\begin{proof}
Since $\bullet$ and $\star$ commute, $(h,s)\cdot z:=h\bullet(s\star z)$ defines a smooth $G\times G$-action. As $G\times G$ is compact and acts freely on the closed manifold $Q$, the quotient $B$ is a smooth closed manifold and $Q\to B$ is a principal $G\times G$-bundle. Because $\bullet$ and $\star$ commute, the $\star$-action descends to $M=Q/\bullet$ via $s\cdot[z]:=[s\star z]$, and the descended action is free: if $[s\star z]=[z]$, then $s\star z=h^{-1}\bullet z$ for some $h\in G$, i.e. $(h,s)\cdot z=z$, forcing $(h,s)=(e,e)$ by bifreeness. Hence $\rho:M\to M/\star=Q/(G\times G)=B$ is a principal $G$-bundle, and symmetrically the descended $\bullet$-action exhibits $\rho':M'\to B$ as a principal $G$-bundle. In particular, $\rho$ and $\rho'$ are submersions, so $M'\times_B M$ is a smooth closed manifold and $M'\times_B M\to B$ is a principal $G\times G$-bundle.

The map $\Theta$ lands in the fiber product, since both coordinates project to the class of $z$ in $B$; it is smooth, being assembled from the quotient submersions. It is $G\times G$-equivariant: $\pi'$ kills $\star$ and transports $\bullet$ to the descended action on $M'$, while $\pi$ kills $\bullet$ and transports $\star$ to the descended action on $M$, so
\[
\Theta\big((h,s)\cdot z\big)=\big(\pi'(h\bullet(s\star z)),\,\pi(h\bullet(s\star z))\big)=\big(h\cdot\pi'(z),\,s\cdot\pi(z)\big)=(h,s)\cdot\Theta(z).
\]
Thus $\Theta$ is a morphism of principal $G\times G$-bundles covering the identity of $B$, hence a diffeomorphism.
\end{proof}

We now bring spherical T-duality into the picture; the following normalization will be in force for the remainder of the section.

Following \cite{Bouwknegt2015}, for an oriented linear $\mathrm{S}^3$-bundle $\pi:E\to\mathrm{S}^4$ we denote by $\pi_{\ast}:\mathrm{H}^{7}(E;\mathbb{Z})\to\mathrm{H}^{4}(\mathrm{S}^4;\mathbb{Z})$ the Gysin homomorphism, and by $u\in\mathrm{H}^4(\mathrm{S}^4;\mathbb{Z})$ the orientation generator. Since $\mathrm{H}^7(\mathrm{S}^4)=\mathrm{H}^8(\mathrm{S}^4)=0$, the Gysin sequence shows that $\pi_\ast$ is an isomorphism; we write $\eta_E\in\mathrm{H}^7(E;\mathbb{Z})$ for the generator normalized by $\pi_\ast\eta_E=u$, and $[a]:=a\,\eta_E$ for $a\in\mathbb{Z}$.

\begin{theorem}\label{thm:equivalence}
Let $P,\widehat{P}\to\mathrm{S}^4$ be $\mathrm{S}^3$-principal bundles (with projections $\pi, \widehat\pi$, respectively) and let $\mathcal{Q}=P\times_{\mathrm{S}^4}\widehat{P}$ be their fiber product. Then, we have a bifree $\star$-diagram
\begin{equation}\label{eq:stardualidade}
\begin{xy}
\xymatrix{
& \mathrm{S}^3 \ar@{..}[d]^{\bullet} & \\
\mathrm{S}^3 \ar@{..}[r]^{\star} & \mathcal{Q} \ar[d]^{\widehat{p}} \ar[r]^{p} & \widehat{P} \\
& P &
}
\end{xy}
\end{equation}
where $\widehat{p}:\mathcal{Q}\to P$ and $p:\mathcal{Q}\to\widehat{P}$ are the two quotient projections. 
Given classes $H\in\mathrm{H}^7(P;\mathbb{Z})$ and $\widehat{H}\in\mathrm{H}^7(\widehat{P};\mathbb{Z})$, the pairs $(P,H),(\widehat{P},\widehat{H})$ are spherical T-dual \emph{if and only if} 
\begin{equation}\label{eq:fluxdecoration}
\mathrm{c}_2(\widehat{P}) = \pi_{\ast}H, \qquad \mathrm{c}_2(P) = \widehat{\pi}_{\ast}\widehat{H}, \qquad \widehat{p}^{\,*}H = p^{*}\widehat{H}\ \ \text{in } \mathrm{H}^7(\mathcal{Q};\mathbb{Z}).
\end{equation}
In particular, writing $P=M_{m,0}$ and $\widehat{P}=M_{j,0}$ (Lemma \ref{lem:principal_s3_condition}), setting $H=[j]$, $\widehat{H}=[m]$ realizes \eqref{eq:fluxdecoration}, with $\mathcal{Q}\cong\pi^{*}\widehat{P}\cong\widehat{\pi}^{\,*}P$ canonically as $\mathrm{S}^3$-$\mathrm{S}^3$-bundles.
\end{theorem}
\begin{proof}
The existence of the $\star$-diagram is straightforward from the definition of the fiber product:
\[\mathcal Q=\left\{(x,\widehat x)\in P\times \widehat P:\pi(x)=\widehat\pi(\widehat x)\right\},\]
a smooth closed manifold since $\pi$ and $\widehat\pi$ are submersions. We choose the $\bullet$-action as 
\[g\bullet (x,\widehat x):=(x,g\widehat x),\]
where $g\widehat x$ denotes the principal $G$-action on $\widehat P$, and likewise, we let the $\star$-action be
\[g\star (x,\widehat x):=(gx,\widehat x).\]
Both are well defined because $\pi$ and $\widehat\pi$ are principal $G$-bundles, so $\pi(gx)=\pi(x)$ and $\widehat\pi(g\widehat x)=\widehat\pi(\widehat x)$; they are free, being free on the respective coordinates, and they commute, acting on distinct coordinates. The $\bullet$-orbits are exactly the fibers of $\widehat p(x,\widehat x)=x$, and the $\star$-orbits exactly the fibers of $p(x,\widehat x)=\widehat x$, so $\mathcal Q/\bullet=P$ and $\mathcal Q/\star=\widehat P$, consistently with the decorations of \eqref{eq:stardualidade}. The combined action $(h,s)\cdot(x,\widehat x)=(sx,\,h\widehat x)$ is free, so the diagram is bifree. This verifies the first part of the claim.

Observe that $\mathcal{Q}=P\times_{\mathrm{S}^4}\widehat{P}$, together with its projections $\widehat{p}$ and $p$, is precisely the correspondence space through which spherical T-duality is formulated: $(P,H)$ and $(\widehat{P},\widehat{H})$ are spherical T-dual if and only if the equations \eqref{eq:fluxdecoration} hold. This verifies the second assertion, whose content is the geometric identification of the T-duality correspondence with the biprincipal $\star$-diagram \eqref{eq:stardualidade}: the two quotient maps of the $\mathrm{S}^3\times\mathrm{S}^3$-structure are the two T-duality projections.

Lastly, write $P=M_{m,0}$ and $\widehat{P}=M_{j,0}$ (Lemma \ref{lem:principal_s3_condition}) and assign the fluxes $H=[j]$ on $P$ and $\widehat{H}=[m]$ on $\widehat{P}$. Theorem \ref{thm:tdualmilnor} verifies the first two equations of \eqref{eq:fluxdecoration} and the pullback equality $\widehat{p}^{\,*}[j]=p^{*}[m]$ for the pair $\big(M_{m,0},[j]\big)$, $\big(M_{0,-j},[m]\big)$; the principal-bundle isomorphism $M_{0,-j}\cong M_{j,0}$, which preserves the normalizations $[\,\cdot\,]$ in the compatible orientations, transports these relations to the present pair and identifies the two correspondence spaces over $\mathrm{S}^4$. This assignment depends only on the Chern classes of $P$ and $\widehat{P}$. For the identification of $\mathcal{Q}$, note that $\pi^{*}\widehat{P}=\{(x,\widehat x)\in P\times\widehat P:\pi(x)=\widehat\pi(\widehat x)\}$ coincides with the fiber product as a smooth manifold; its tautological principal action, on the $\widehat{P}$-coordinate, is the $\bullet$-action above, while the $G$-action inherited from the principal action of $P$ on the first coordinate is the $\star$-action, so $\mathcal{Q}\cong\pi^{*}\widehat{P}$, and symmetrically $\mathcal{Q}\cong\widehat{\pi}^{\,*}P$, at the level of $\mathrm{S}^3$-$\mathrm{S}^3$-bundles.
\end{proof}

\begin{remark}\label{rem:bifree-converse}
By Lemma \ref{lem:bifree}, the passage is reversible: every bifree $\star$-diagram is canonically the fiber product of its residual principal bundles over $B=Q/(G\times G)$. In particular, the bifree diagram \eqref{eq:principal-pulling-back} of Proposition \ref{prop:pullback-star} is a fiber product of principal $\mathrm{S}^3$-bundles.
\end{remark}

The pullback construction of Proposition \ref{prop:pullback-star}, performed with the conjugation action \eqref{eq:actionPr}, never produces a bifree structure, by item (4). It is therefore a genuinely different mechanism from the fiber product of Theorem \ref{thm:equivalence}, and it is this mechanism that produces the dual bundle in what follows.

\begin{theorem}\label{thm:non-principal-equivalence}
Let $E,\widehat{E}\to\mathrm{S}^4$ be oriented linear $\mathrm{S}^3$-bundles with projections $\pi_E, \pi_{\widehat E}$, respectively, and assume they have the same Euler class $\mathrm{e}(E)=\mathrm{e}(\widehat{E})=ku$. By Theorem \ref{thm:so(4)principals}, write $E=M_{m,k-m}$ and $\widehat{E}=M_{j,k-j}$ for some $m,j\in\mathbb{Z}$. Then the $\mathrm{S}^3$-$\mathrm{S}^3$-bundle $\mathcal{Q}:=\pi_E^{*}P_{j-m}$ of Proposition \ref{prop:pullback-star} fits into the $\star$-diagram
\begin{equation}\label{eq:stardualidadenonprinc}
\begin{xy}
\xymatrix{
& \mathrm{S}^3 \ar@{..}[d]^{\bullet} & \\
\mathrm{S}^3 \ar@{..}[r]^{\star} & \mathcal{Q} \ar[d]^{\widehat{p}} \ar[r]^{p} & \widehat{E} \\
& E &
}
\end{xy}
\end{equation}
where $\widehat{p}:\mathcal{Q}\to E$ and $p:\mathcal{Q}\to\widehat{E}$ are the two quotient projections; in contrast with Theorem \ref{thm:equivalence}, this $\star$-diagram is not bifree. Moreover, the degree-seven classes $H = [k]\in\mathrm{H}^7(E;\mathbb{Z})$ and $\widehat{H} = [k]\in\mathrm{H}^7(\widehat{E};\mathbb{Z})$ satisfy
\[
\pi_{E\,\ast}H = \mathrm{e}(\widehat{E}), \qquad \pi_{\widehat{E}\,\ast}\widehat{H} = \mathrm{e}(E),
\]
together with the pullback equality $q^{*}H = \widehat{q}^{\,*}\widehat{H}$ on the correspondence space $E\times_{\mathrm{S}^4}\widehat{E}$ of spherical T-duality, where $q$ and $\widehat{q}$ denote its two canonical projections. Consequently, $(E,H)$ and $(\widehat{E},\widehat{H})$ are spherical T-dual.

Conversely, given a $\star$-diagram generated via Proposition \ref{prop:pullback-star} over a non-principal base $E$, the $\star$-quotient $\widehat{E}$ is an oriented linear $\mathrm{S}^3$-bundle over $\mathrm{S}^4$ with $\mathrm{e}(\widehat{E})=\mathrm{e}(E)=ku$, and the fluxes $H=\widehat{H}=[k]$ form the unique pair of classes making $(E,H)$ and $(\widehat{E},\widehat{H})$ spherical T-dual.
\end{theorem}
\begin{proof}
For the forward implication, apply Proposition \ref{prop:pullback-star} with base $M_{m,k-m}=E$ and auxiliary bundle $P_{j-m}=M_{0,j-m}$: items (1) and (2) produce the $\mathrm{S}^3$-$\mathrm{S}^3$-bundle $\mathcal{Q}=\pi_E^{*}P_{j-m}$ with
\[
\mathcal{Q}/\bullet = E,\qquad \mathcal{Q}/\star \cong M_{m+(j-m),\,(k-m)-(j-m)} = M_{j, k-j} = \widehat{E},
\]
which is the $\star$-diagram \eqref{eq:stardualidadenonprinc}, and item (4) shows it is not bifree. It remains to verify the assertions concerning the fluxes $H=\widehat{H}=[k]$. The two displayed equalities hold by the normalization $\pi_{E\,\ast}\eta_E=u$, $\pi_{\widehat{E}\,\ast}\eta_{\widehat{E}}=u$ fixed above: $\pi_{E\,\ast}[k]=ku=\mathrm{e}(\widehat{E})$ and $\pi_{\widehat{E}\,\ast}[k]=ku=\mathrm{e}(E)$. The pullback equality $q^{*}H = \widehat{q}^{\,*}\widehat{H}$ on the correspondence space $E\times_{\mathrm{S}^4}\widehat{E}$ is exactly the Serre spectral-sequence relation $k(u\otimes x)=k(u\otimes y)$ established in the proof of Theorem \ref{thm:nonprincipaltdualmilnor}. Hence $(E,[k])$ and $(\widehat{E},[k])$ are spherical T-dual.

For the converse, suppose we generate a $\star$-diagram via Proposition \ref{prop:pullback-star} by pulling back a principal bundle $P_c$ over a non-principal base $E = M_{m,n}$. By item (2), the resulting quotient is $\widehat{E} = M_{m+c, n-c}$, with Euler class $(m+c)+(n-c)=m+n=\mathrm{e}(E)$; writing $\mathrm{e}(E)=(m+n)u=:ku$, the construction therefore preserves the Euler class, so that $\mathrm{e}(E)=\mathrm{e}(\widehat{E})=ku$. Assigning to each side the diagonal flux $[k]$, Theorem \ref{thm:nonprincipaltdualmilnor} (whose proof verifies the pullback equality on the correspondence space) guarantees that $(E,[k])$ and $(\widehat{E},[k])$ are spherical T-dual. Uniqueness is immediate: since $\pi_{E\,\ast}$ and $\pi_{\widehat{E}\,\ast}$ are isomorphisms, the equations $\pi_{E\,\ast}H=\mathrm{e}(\widehat{E})=ku$ and $\pi_{\widehat{E}\,\ast}\widehat{H}=\mathrm{e}(E)=ku$ determine $H=\widehat{H}=[k]$ uniquely.
\end{proof}

\begin{remark}\label{rem:two-correspondences}
Two distinctions deserve emphasis. First, the $\star$-diagram total space $\mathcal{Q}=\pi_E^{*}P_{j-m}=E\times_{\mathrm{S}^4}P_{j-m}$ is in general distinct from the T-duality correspondence space $E\times_{\mathrm{S}^4}\widehat{E}$ on which the pullback equality is verified: the $\star$-diagram produces the dual bundle, while the equations characterizing spherical T-duality are stated on the canonical fiber product. Second, the dichotomy with the principal case is structural, not accidental. In Theorem \ref{thm:equivalence} the $\star$-diagram is bifree and, by Lemma \ref{lem:bifree}, canonically a fiber product of principal bundles; correspondingly, in the principal case the relations \eqref{eq:fluxdecoration} interchange the second Chern classes and the fluxes of the two bundles. Here, by Proposition \ref{prop:pullback-star}(4), the element $(-1,-1)\in G\times G$ acts trivially, no bifree structure is produced, and correspondingly the construction is confined to a fixed Euler class: the diagonal fluxes $H=\widehat{H}=[k]$ are available precisely because the pullback construction \eqref{eq:first-pulling-back} preserves the Euler class.
\end{remark}

\vspace{1em}

\begin{remark}\label{rem:def-scope}
Definition \ref{def:generalized-log} extends verbatim to twisted doubles presented as two copies of $\mathrm{D}^4\times\mathrm{S}^3$ glued along the equatorial $\mathrm{S}^3\times\mathrm{S}^3$: admissibility retains its meaning, namely that the reglued twisted double is a homotopy sphere. The geometric identification of Proposition \ref{prop:admissible-star} (between the product-preserving transformation with boundary map $\psi\times\mathrm{id}_{\mathrm{S}^1}$ and the $\star$-quotient of the pulled-back bundle) also carries over unchanged. We caution that the homological criterion of Remark \ref{rem:admissibility} is tied to the mixed pieces $\mathrm{D}^{k+1}\times\mathrm{S}^l$, $\mathrm{S}^k\times\mathrm{D}^{l+1}$ and takes a different matrix form in the bundle presentation. In the application below, admissibility is instead verified directly, by identifying the reglued twisted double with a Milnor bundle of Euler class one.
\end{remark}

\vspace{1em}

\begin{theorem}\label{thm:Sperical-T-Dual-vs-Log}
Let $m,j\in\mathbb{Z}$, and let $M_{m,1-m}$ and $M_{j,1-j}$ be the corresponding homotopy $7$-spheres, so that $\mathrm{e}(M_{m,1-m})=\mathrm{e}(M_{j,1-j})=u$. Equipped with the diagonal fluxes $[1]$, they realize a spherical T-duality (Theorem \ref{thm:nonprincipaltdualmilnor}). These manifolds fit into an explicit $\star$-diagram, generated by the pullback of the principal Milnor bundle $P_{j-m}$ over $M_{m,1-m}$:
\begin{equation}\label{eq:theorem-7d-diagram}
\begin{xy}\xymatrix{
& \mathrm{S}^3\ar@{..}[d]^{\bullet} & \\ 
\mathrm{S}^3\ar@{..}[r]^{\star} & Q\ar[d]^{\widehat{p}}\ar[r]^{p} & M_{j,1-j}\\ 
& M_{m,1-m} &
}\end{xy}
\end{equation}
where the total space is $Q = \pi_{m,1-m}^{*}P_{j-m}$.

Furthermore, upon taking the Cartesian product with $\mathrm{S}^1$, the spaces $M_{m,1-m}\times\mathrm{S}^1$ and $M_{j,1-j}\times\mathrm{S}^1$ are related by a product-preserving generalized logarithmic transformation in the sense of Definition \ref{def:generalized-log}. This transformation is realized as the $\star$-quotient of the pullback bundle $\mathrm{pr}_1^* Q$ along the first-factor projection $\mathrm{pr}_1: M_{m,1-m} \times \mathrm{S}^1 \to M_{m,1-m}$, with boundary map $\psi_{j-m}\times\mathrm{id}_{\mathrm{S}^1}$, where $[\psi_{j-m}] \in L_\star(\mathrm{S}^3\times\mathrm{S}^3)$. Under the canonical identification $\mathrm{pr}_1^* Q\cong Q\times\mathrm{S}^1$, the transformation fits into the $\star$-diagram:
\begin{equation}\label{eq:theorem-8d-diagram}
\begin{xy}\xymatrix{
& \mathrm{S}^3\ar@{..}[d]^{\bullet} & \\ 
\mathrm{S}^3\ar@{..}[r]^{\star} & \mathrm{pr}_1^* Q \ar[d]^{\Pi}\ar[r]^{\Pi'} & M_{j,1-j} \times \mathrm{S}^1\\ 
& M_{m,1-m} \times \mathrm{S}^1 &
}\end{xy}
\end{equation}
where $\Pi = \widehat{p} \times \mathrm{id}_{\mathrm{S}^1}$ and $\Pi' = p \times \mathrm{id}_{\mathrm{S}^1}$.
\end{theorem}

\begin{proof}
Since $\mathrm{e}(M_{m,1-m})=\mathrm{e}(M_{j,1-j})=u$, Theorem \ref{thm:non-principal-equivalence} applies and constructs Diagram \eqref{eq:theorem-7d-diagram} with $Q = \pi_{m,1-m}^{*}P_{j-m}$: here $\widehat{p}$ is the principal quotient and $p$ the $\star$-quotient, and Proposition \ref{prop:pullback-star}(2) identifies the latter, \[ Q/\star\;\cong\;M_{m+(j-m),\;(1-m)-(j-m)}\;=\;M_{j,1-j}, \] which is a homotopy $7$-sphere, its Euler class being $\big(j+(1-j)\big)u=u$. In particular, the cases $m\in\{0,1\}$ and $j\in\{0,1\}$, in which one of the two bundles is principal, are included.

To connect this configuration to the generalized logarithmic transformation, we extract the boundary diffeomorphism induced by the $\star$-quotient. By Proposition \ref{prop:pullback-star}(2), the quotient of $Q$ modifies the clutching diffeomorphism of $M_{m,1-m}$ by postcomposition with $\widehat{\Phi}$, where $\Phi(x,y) = x^{j-m}$ is the pulled-back transition function on the equatorial $\mathrm{S}^3 \times \mathrm{S}^3$. Explicitly, under the conjugation action:
\[
\psi_{j-m}(x,y) := \widehat{\Phi}(x,y) = x^{j-m} \cdot (x,y) = \big(x,\;x^{j-m}\,y\,x^{m-j}\big).
\]

We first record that $\psi_{j-m}$ is a diffeomorphism of $\mathrm{S}^3\times\mathrm{S}^3$, with inverse $(x,y)\mapsto (x,\,x^{m-j}y\,x^{j-m})$, and that it is $\mathrm{S}^3$-equivariant for the diagonal conjugation action. Next, since all the group elements involved are powers of the same quaternion $x$, postcomposition and precomposition agree here: \[\widehat{\Phi}\circ f_{m,1-m}=f_{m,1-m}\circ\psi_{j-m}=f_{j,1-j},\] both sides being $(x,y)\mapsto(x,\,x^{j}\,y\,x^{1-j})$. In particular, regluing the twisted double \eqref{eq:milnor-twisted-double} of $M_{m,1-m}$ by $\psi_{j-m}$ produces $M_{j,1-j}$, a homotopy $7$-sphere; hence $\psi_{j-m}$ is admissible in the extended sense of Remark \ref{rem:def-scope}, and $\phi:=\psi_{j-m}\times\mathrm{id}_{\mathrm{S}^1}$ is a boundary diffeomorphism of the product form required by Definition \ref{def:generalized-log}. It remains to verify that $[\psi_{j-m}] \in L_\star(\mathrm{S}^3\times\mathrm{S}^3) = \mathrm{Im}(DR)$. Consider the smooth map $\alpha:\mathrm{S}^3\times\mathrm{S}^3 \to \mathrm{S}^3$, $\alpha(x,y)=x^{m-j}$, which is equivariant for the diagonal conjugation action on $\mathrm{S}^3\times\mathrm{S}^3$ and the conjugation action on $\mathrm{S}^3$, since $(sx\bar s)^{m-j}=s\,x^{m-j}\,\bar s$; here the hat is taken with respect to the diagonal conjugation action. Its hat evaluates to:
\[
\widehat{\alpha}(x,y) = x^{m-j} \cdot (x,y) = \big(x,\;x^{m-j}yx^{-(m-j)}\big) = \big(x,\;x^{m-j}yx^{j-m}\big).
\]
By the hat-inverse identity established in the proof of Lemma \ref{lem:speranca-sigma-r}, the inverse of $\widehat\alpha$ is the hat of the pointwise inverse $\alpha^{-1}(x,y)=x^{j-m}$, namely \[ \widehat{\alpha}^{-1}(x,y) = \widehat{\alpha^{-1}}(x,y)=\big(x,\;x^{j-m}\,y\,x^{m-j}\big)=\psi_{j-m}(x,y). \] By the definition of $DR$ in Proposition \ref{prop:DR}, $DR([\alpha]) = \big[\widehat{\alpha}^{\,-1}\big]=[\psi_{j-m}]$, confirming that $[\psi_{j-m}] \in L_\star(\mathrm{S}^3\times\mathrm{S}^3)$.

Because $[\psi_{j-m}] \in L_\star$, we may invoke Proposition \ref{prop:admissible-star} in the extended scope of Remark \ref{rem:def-scope}; its hypothesis \textup{(ii)} is satisfied, since the conjugation action of $\mathrm{S}^3$ on $\mathrm{S}^3$ fixes $\pm 1$. That proposition identifies the product-preserving generalized logarithmic transformation on $M_{m,1-m}\times\mathrm{S}^1$ with boundary map $\phi=\psi_{j-m}\times\mathrm{id}_{\mathrm{S}^1}$ with the $\star$-quotient of the pullback bundle $\mathrm{pr}_1^* Q$: the transition function of $\mathrm{pr}_1^*Q$ is $\Phi\circ\mathrm{pr}_1$, constant along the circle coordinate, so the slice mechanism yields the quotient gluing map $(\widehat{\Phi}\circ f_{m,1-m})\times\mathrm{id}_{\mathrm{S}^1}=f_{j,1-j}\times\mathrm{id}_{\mathrm{S}^1}$, whence $\mathrm{pr}_1^*Q/\star\cong M_{j,1-j}\times\mathrm{S}^1$. This is the content of Diagram \eqref{eq:theorem-8d-diagram}.
\end{proof}

\vspace{1em}

\begin{example}[Gromoll--Meyer]\label{ex:gm-as-corollary}
Take $m=1$ and $j=2$. The base is $M_{1,0}=\mathrm{S}^7$, the auxiliary bundle is $P_{1}=M_{0,1}$, and the boundary map is $\psi_{1}(x,y)=(x,\,xy\bar x)$; indeed
\[
\widehat{\Phi}\circ f_{1,0}(x,v)=\widehat{\Phi}(x,\,xv)=\big(x,\;x(xv)\bar x\big)=\big(x,\;x^{2}v\,x^{-1}\big)=f_{2,-1}(x,v).
\]
Theorem \ref{thm:Sperical-T-Dual-vs-Log} therefore produces $M_{2,-1}=\Sigma^7_{GM}$ from $\mathrm{S}^7$, and exhibits $\Sigma^7_{GM}\times\mathrm{S}^1$ as a product-preserving generalized logarithmic transformation of $\mathrm{S}^7\times\mathrm{S}^1$. This is the same conclusion reached in Example \ref{ex:gm-revisited}, obtained there through the mixed presentation $\mathrm{S}^7=(\mathrm{D}^4\times\mathrm{S}^3)\cup_{\mathrm{id}}(\mathrm{S}^3\times\mathrm{D}^4)$ and the boundary map $\beta_b^{-1}\circ\beta_a$; the two boundary maps differ because the two presentations of $\mathrm{S}^7$ differ, but the resulting manifolds agree.
\end{example}

\vspace{1em}

\section*{Acknowledgments}
When this work began, the work of L.F.C. was supported by S\~ao Paulo Research Foundation FAPESP grants 2022/09603-9 and 2023/14316-1. 

L.~F.~C. and L.~K. are supported by the Simons Foundation, grant SFI-MPS-T-Institutes-00007697, and the Ministry of Education and Science of the Republic of Bulgaria, grant DO1-239/10.12.2024. 

L.~K. is supported by the Simons Investigators Award (no. 003136), the Simons Collaboration on Homological Mirror Symmetry (award no. 003093), and by the NSF FRG grant DMS-2245099.

L. G. is partially supported by S\~ao Paulo Research Foundation (FAPESP) grants 2023/13131-8, 2021/04065-6 and by CNPq grant 306021/2024-2.

\bibliographystyle{elsarticle-harv}
\bibliography{main}

\end{document}